\newtheorem{defi}{Definition}[section]
\newtheorem{thm}[defi]{Theorem}
\newtheorem{lem}[defi]{Lemma}
\newtheorem{cor}[defi]{Corollary}
\newtheorem{pro}[defi]{Proposition}
\newtheorem{rem}[defi]{Remark}
\newcommand{\bpr}{\begin{proof}[Proof]}  
\newcommand{\epr}{\end{proof}}
\newcommand{\beq}{\begin{equation}}
\newcommand{\eeq}{\end{equation}}
\newcommand{\bce}{\begin{center}}
\newcommand{\ece}{\end{center}}
\newcommand{\be}{\begin{enumerate}}  
\newcommand{\ee}{\end{enumerate}}
\newcommand{\difft}{\frac{d}{dt}}
 \DeclareMathOperator*{\Real}{Re}
\DeclareMathOperator*{\dist}{dist}
\DeclareMathOperator*{\diver}{div}
\DeclareMathOperator*{\Mdiver}{Div}
\def\al{\alpha}
\def\pa{\partial}
\def\om{\omega}
\def\si{\sigma}
\def\la{\lambda}
\def\ep{\varepsilon}
\def\de{\delta}
\def\ph{\varphi}
\def\t{\tau}
\def\ka{\kappa}
\def\ga{\gamma}
\def\th{\theta}
\def\De{\Delta}
\def\Ga{\Gamma}
\def\Om{\Omega}
\def\R{\mathbb R}
\def\C{\mathbb C}
\def\N{\mathbb N}
\def\B{\mathbb B}
\def\E{\mathbb E}
\def\L{\mathbb L}
\def\calA{\mathcal A}
\def\calB{\mathcal B}
\def\calH{\mathcal H}
\def\calT{\mathcal T}
\def\calR{\mathcal R}
\def\calF{\mathcal F}
\def\calM{\mathcal M}
\def\calS{\mathcal S}
\def\calW{\mathcal W}
\def\calY{\mathcal Y}
\numberwithin{equation}{section}
\def\nn{\nonumber}
\title[On a Cahn-Hilliard-Gurtin system]
{$L_p$-Theory for a Cahn-Hilliard-Gurtin system}
\author[Mathias Wilke]{Mathias Wilke}
\address{Institut f\"{u}r Mathematik, Martin-Luther-Universit\"at Halle-Wittenberg,
06099 Halle, Germany}
\email{mathias.wilke@mathematik.uni-halle.de}
\subjclass[2000]{35K55, 35B38, 35B40, 35B65, 82C26}
\date{\today}
\keywords{Cahn-Hilliard-Gurtin equation, quasilinear elliptic-parabolic system, optimal regularity, global existence, convergence to steady states, Lojasiewicz-Simon inequality}
\begin{document}

\maketitle

\begin{abstract}
In this paper we study a generalized Cahn-Hilliard equation which was proposed by Gurtin \cite{Gur}. We prove the existence and uniqueness of a local-in-time solution for a quasilinear version, that is, if the coefficients depend on the solution and its gradient. Moreover we show that local solutions to the corresponding semilinear problem exist globally as long as the physical potential satisfies certain growth conditions. Finally we study the long-time behaviour of the solutions and show that each solution converges to a equilibrium as time tends to infinity.
\end{abstract}

\section{Introduction}

We start with the derivation of the classical Cahn-Hilliard
equation. Consider the free energy functional of the form
\beq\label{freeEn}\calF(\psi)=\int_\Om\left(\frac{1}{2}|\nabla\psi|^2+\Phi(\psi)\right)\
dx,\eeq where $\Om$ is a bounded, open and connected subset of
$\R^n$ with boundary $\Ga:=\pa\Om\in C^3$. We assume that the order
parameter $\psi$ is a conserved quantity. The according conservation
law reads \beq\label{conslaw}\pa_t\psi+\diver j=0,\eeq where $j$ is
a vector field representing the phase flux of the order parameter.
The next step is to combine the two quantities $j$ and $\mu$.
Similar to Fourier's law in the derivation of the heat equation one
typically assumes that $j$ is given by
\beq\label{post}j=-\nabla\mu,\eeq a \emph{postulated} relation.
Finally we have to derive an equation for $\mu$. The chemical
potential $\mu$ is given by the \emph{variational derivative} of
$\calF$, i.e.
$$\mu=\frac{\de\calF}{\de\psi}=-\De\psi+\Phi'(\psi).$$
If $\calF$ is of the form \eqref{freeEn} this yields the classical
Cahn-Hilliard equation.

In the early nineties \textsc{Gurtin} \cite{Gur} proposed a
\emph{generalized} Cahn-Hilliard equation, which is based on the
following objections:
\begin{itemize}
\item Fundamental physical laws should account for the work associated with each operative kinematical process;
\item There is no clear separation of the balance law \eqref{conslaw} and the constitutive equation \eqref{post};
\item Forces that are associated with microscopic configurations of atoms are not considered in the derivation of the classical Cahn-Hilliard equation.
\end{itemize}
According to Gurtin there should exist so called 'microforces' whose
work accompanies changes in the order parameter $\psi$. The
microforce system is characterized by the microstress $\xi\in\R^n$
and scalar quantities $\pi$ and $\ga$ which represent internal and
external microforces, respectively. The main assumption in
\cite{Gur} is that $\xi$, $\pi$ and $\ga$ satisfy the (local)
\emph{microforce balance}
\beq\label{balance}\diver\xi+\pi+\ga=0,\eeq which can be motivated
from a static point of view, see \cite{Gur} for more details. In a
next step we want to derive constitutive equations, which relate the
quantities $j$, the flux of the order parameter, $\xi$ and $\pi$ to
the fields $\psi$ and $\mu$. The technique used in \cite{Gur} for
this derivation is based on the balance equation \eqref{balance} and
a (local) dissipation inequality, which is a direct consequence of
the first and the second law of thermodynamics, that is, the energy
balance
$$\difft \int_\Om e\ dx=-\int_{\pa\Om}q\cdot \nu\ d\si+\int_\Om r\
dx+\calW(\Om)+\calM(\Om),$$ and
$$\difft\int_\Om S\ dx\ge -\int_{\pa\Om}\frac{q}{\th}\cdot \nu\
d\si+\int_\Om\frac{r}{\th}\ dx,$$ cf. \cite[Appendix A]{Gur}. The
second law of thermodynamics is also known as the
\emph{Clausius-Duhem inequality}. Here $e$ is the internal energy,
$S$ is the entropy, $\th$ is the absolute temperature, $q$ is the
heat flux, $r$ is the heat supply, $\calW(\Om)$ is the rate of
working on $\Om$ of all forces exterior to $\Om$ and $\calM(\Om)$ is
the rate at which energy is added to $\Om$ by mass transport. Let
$F$ be the free energy density, depending on the vector
$z=(\psi,\nabla\psi,\mu,\nabla\mu,\pa_t\psi)$. Then the second law
of thermodynamics (in its mechanical version as considered by Gurtin
\cite{Gur}) reads
$$\difft \int_\Om F(z)\ dx\le -\int_{\pa\Om}\mu j(z)\cdot\nu\
d\si+\int_{\pa\Om}\xi\cdot\nu \pa_t\psi\ d\si+\int_\Om\mu m\
dx+\int_\Om\ga\pa_t\psi\ dx,$$ with $m$ being the external mass
supply. Making use of Green's formula, we obtain
\begin{multline*}\difft \int_\Om F(z)\ dx\le -\int_{\Om}(\nabla\mu\cdot
j(z)+\mu\diver j)\ dx\\
+\int_{\Om}(\diver \xi\pa_t\psi+\xi\cdot\nabla\pa_t\psi)\ dx
+\int_\Om\mu m\ dx+\int_\Om\ga\pa_t\psi\ dx.\end{multline*} in
presence of external mass supply $m$, \eqref{conslaw} will be
modified to \beq\label{modconslaw}\pa_t\psi+\diver j=m.\eeq In view
of \eqref{balance} and \eqref{modconslaw} we obtain the dissipation
inequality $$\difft \int_\Om F(z)\ dx\le
\int_{\Om}(\mu\pa_t\psi-j\cdot\nabla\mu-\pi\pa_t\psi+\xi\cdot\nabla\pa_t\psi)\
dx.$$ This in turn yields the following local dissipation inequality
$$\pa_t F(z)\le
\mu\pa_t\psi-j\cdot\nabla\mu-\pi\pa_t\psi+\xi\cdot\nabla\pa_t\psi,$$
for all fields $\psi$ and $\mu$, this means, we have
\beq\label{dissineq2} (\pa_\psi
F+\pi-\mu)\dot{\psi}+(\pa_{\nabla\psi}F-\xi)\cdot\nabla\dot{\psi}+\pa_\mu
F\dot{\mu}+\pa_{\nabla\mu}F\nabla\dot{\mu}+\pa_{\dot{\psi}}F\ddot{\psi}+\nabla\mu\cdot
j\le 0, \eeq where $\dot{u}=\pa_t u$ and $\ddot{u}=\pa_t^2u$ for a
smooth function $u$. This local inequality needs to be satisfied for
all smooth fields $\psi$ and $\mu$. Hence we have necessarily
$$F(z)=F(\psi,\nabla\psi)\quad\text{and}\quad\xi(\psi,\nabla\psi)=\pa_{\nabla\psi}F(\psi,\nabla\psi)$$
and there remains the inequality
$$(\pa_\psi F+\pi-\mu)\dot{\psi}+\nabla\mu\cdot j\le 0$$
whose general solution is given by (cf. \cite[Appendix B]{Gur})
$$\pa_\psi F+\pi-\mu=-\beta\dot{\psi}-c\cdot\nabla\mu\quad\text{and}\quad j=-a\dot\psi-B\nabla\mu,$$
with \emph{constitutive moduli} $\beta(z)$ (scalar), $a(z),c(z)$
(vectors), $B(z)$ (matrix) and the constraint that the matrix
\beq\label{tensor}\begin{bmatrix}\beta & c^\textsf{T}\\a &
B\end{bmatrix}\eeq is positive semidefinite. For convenience we
assume that $\beta$ is constant and $a,c$ and $B$ do only depend on
$x$ instead of $z$, whence we deal with an approximation of the
constitutive moduli $\beta(z),a(z),c(z),B(z)$. In particular, if the
free energy density $F$ is given by
$F(\psi,\nabla\psi)=\frac{1}{2}|\nabla\psi|^2+\Phi(\psi)$ we obtain
the following semilinear \emph{Cahn-Hilliard-Gurtin} equations.
\begin{align}\begin{split}\label{CHG}
\pa_t\psi-\diver (B\nabla\mu)-\diver(a \pa_t\psi)=f,\quad t>0,\ x\in\Om,\\
\mu-c\cdot\nabla\mu+\De \psi-\beta\pa_t\psi-\Phi'(\psi)=g,\quad
t>0,\ x\in\Om,\end{split}\end{align} where $\Om\subset\R^n$ is open
and bounded with boundary $\Ga=\pa\Om\in C^3$. We want to emphasize
that for the special case $B=I$, $a=c=0$ and $\beta=0$, we obtain
the classical Cahn-Hilliard equation or the \emph{viscous}
Cahn-Hilliard equation if $\beta>0$.

Let us point out that we will also deal with a quasilinear version of \eqref{CHG} in Section 5. To be precise, we will consider the system
\begin{align}\begin{split}\label{CHGquasilin}
\pa_t\psi-\diver (b(x,\psi,\nabla\psi)\nabla\mu)-\diver(a(x,\psi,\nabla\psi) \pa_t\psi)=f,\quad t>0,\ x\in\Om,\\
\mu-c(x,\psi,\nabla\psi)\cdot\nabla\mu+\De \psi-\beta\pa_t\psi-\Phi'(\psi)=g,\quad
t>0,\ x\in\Om.\end{split}\end{align}
In this paper, we are interested in solutions of \eqref{CHG} and \eqref{CHGquasilin} subject
to the Neumann boundary conditions for $(\psi,\mu)$, having optimal $L_p$-regularity in the sense
$$\psi\in H_{p}^1(J;H_p^1(\Om))\cap L_{p}(J;H_p^3(\Om)),$$
and
$$\mu\in L_{p}(J;H_p^2(\Om)),$$
for given functions $f\in L_{p}(J;L_p(\Om))$ and $g\in
L_{p}(J;H_p^1(\Om))$, where $J=[0,T]$. We will always use the following assumptions for the semilinear problem \eqref{CHG}.
\begin{itemize}
\item $a,c\in C^1(\overline{\Omega})^n$,
\item $\diver a(x)=\diver c(x)=0$ in $\Omega$,
\item $(a(x)|\nu(x))=(c(x)|\nu(x))=0$ on $\partial\Omega$,
\item $\beta>0$, $B=bI$, with $b\in C^1(\overline{\Omega})$,
\item there is a constant $\ep>0$, such that the estimate
$$\beta
z_0^2+(a+c|z_1)z_0+(Bz_1|z_1)\ge\ep(z_0^2+|z_1|^2)$$ is
valid for all $(z_0,z_1)\in\R\times\R^n$ and all
$x\in\overline{\Om}$.
\end{itemize}
In Section 2, where we consider $\Omega=\mathbb{R}^n$, we allow for general, positive definite matrices $B$. It is also possible to consider those matrices in all other sections but for the sake of convenience we restrict ourself to the case $B=bI$. Actually this allows to draw back the problem in the half space $\mathbb{R}^n_+$ to the whole space $\mathbb{R}^n$ by means of reflection methods.

Results on existence and uniqueness can be found e.g.\ in the papers of
\textsc{Bonfoh \& Miranville} \cite{BonMir}, \textsc{Miranville}
\cite{Mir0a}, \cite{Mir3}, \textsc{Miranville} \& \textsc{Pi\'{e}trus} \cite{MirPiet06}, \textsc{Miranville, Pi\'{e}trus \&
Rakotoson} \cite{MirPieRak} and \textsc{Miranville} \& \textsc{Zelik} \cite{MirZel09}. In any of these papers the authors
use a variational approach and energy estimates to obtain
global well-posedness in an $L_2$-setting, with periodic boundary conditions for a cuboid in $\R^3$. The
qualitative behavior of solutions of the Cahn-Hilliard-Gurtin
equation has been investigated in \cite{BonMir}, \cite{MirPieRak}
and \cite{MirRou}. In \cite{BonMir} and \cite{MirPieRak} the
authors proved the existence of finite dimensional attractors,
whereas \textsc{Miranville \& Rougirel} \cite{MirRou} showed that
each solution converges to a steady state, again with the help of
the Lojasiewicz-Simon inequality. One assumption
of \textsc{Miranville \& Rougirel} \cite{MirRou} is that the norms
$|a|$, $|c|$ and $|B-I|$ are bounded by a possibly small constant. In the present paper we will give an alternative proof for the relative compactness of the orbit $\{\psi(t)\}_{t\ge 0}$ in $H_2^1(\Omega)$ with the help of semigroup theory and a priori estimates (see Proposition \ref{relcompCHG}).

The present paper is structured as follows. In Section 2 we deal
with a corresponding linearized system to \eqref{CHGquasilin} in the full
space $\R^n$ with constant coefficients. Section 3 is devoted to the
analysis of the linearized system with constant coefficients in the
half space $\R_+^n$. Making use of the optimal regularity results of
Sections 2 and 3 we apply the method of localization and some
perturbation results in Section 4 to derive optimal $L_p$-regularity
for the linearized Cahn-Hilliard-Gurtin equations (i.e.\ \eqref{CHG} with $\Phi'=0$) in an arbitrary bounded domain $\Om\subset\R^n$ with boundary $\pa\Om\in C^3$. In
Section 5 we prove the existence and uniqueness of a local-in-time solution of \eqref{CHGquasilin}. For this purpose it is crucial to have the optimal $L_p$-regularity result from Section 4 at our disposal. To the knowledge of the author there are no results on the local well-posedness of \eqref{CHGquasilin} but only for the case where $a$, $c$ and $b$ depend solely on the order parameter $\psi$, cf.\ \textsc{Miranville} \cite{Mir01}.
In Section 6 we investigate the global well-posedness
of the semilinear system \eqref{CHG}. The basic tools are a
priori estimates and the Gagliardo-Nirenberg inequality. Finally, in Section 7, we show that each solution $\psi(t)$ of \eqref{CHG} converges to a steady state in $H_2^1(\Omega)$ as $t\to\infty$. To this end we will use relative compactness results and the Lojasiewicz-Simon inequality.

\section{The Linear Cahn-Hilliard-Gurtin Problem in $\R^n$}\label{CHGsec1}

In this section we will solve the full space problem
\begin{align}\label{GR}
\pa_tu-\diver (a \pa_tu)&=\diver(B\nabla \mu)+f,\quad t>0,\ x\in\R^n,\nn\\
\mu-c\cdot\nabla\mu&=\beta \pa_tu-\De u+g,\quad t>0,\ x\in\R^n,\\
u(0)&=u_0,\quad t=0,\ x\in\R^n\nn,
\end{align}
where $\beta>0$, $a,c\in\R^n$ and $B\in\R^{n\times n}$. Note
that the matrix \eqref{tensor} is positive semidefinite if and only
if
$$\beta z_0^2+(a+c|z_1)z_0+(Bz_1|z_1)\ge 0$$
holds for all $(z_0,z_1)\in\R\times\R^n$ and all
$x\in\overline{\Om}$. Here $(\cdot|\cdot)$ denotes the usual scalar
product in $\C^n$ and the vector fields $a,c$ as well as the matrix
valued function $B$ are assumed to be smooth. In the sequel we will
use a slightly stronger assumption.\vspace{0.25cm}
\begin{itemize}
\item[\textbf{(H)}] There is a constant $\ep>0$, such that
$$\beta
z_0^2+(a+c|z_1)z_0+(Bz_1|z_1)\ge\ep(z_0^2+|z_1|^2)$$ is
valid for all $(z_0,z_1)\in\R\times\R^n$ and all
$x\in\overline{\Om}$.\vspace{0.25cm}
\end{itemize}
The following result is useful for the analysis of \eqref{GR} (see also \cite[Lemma 5.1]{MirRou}).
\begin{pro}\label{HA}
Let (H) hold. Then
$$(\beta B\xi|\xi)-\frac{1}{2}\left((a\otimes c+c\otimes
a)\xi|\xi\right)\ge\ep\beta|\xi|^2,$$ for all $\xi\in\R^n$.
\end{pro}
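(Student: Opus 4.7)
The plan is to combine two observations: first, a pointwise rewriting of the rank-one tensor piece, and second, a minimisation of the quadratic form in (H) with respect to the scalar variable $z_0$.

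To begin, I would compute $\tfrac{1}{2}((a\otimes c + c\otimes a)\xi|\xi)$ explicitly. Since $((a\otimes c)\xi|\xi) = (a|\xi)(c|\xi)$ and similarly for $c\otimes a$, the target inequality is equivalent to
$$\beta(B\xi|\xi) - (a|\xi)(c|\xi) \ge \ep\beta|\xi|^2.$$
That is a cleaner form to work with.

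Next, I would feed (H) with $z_1 = \xi$ held fixed, viewing the left-hand side as a real quadratic in $z_0$. The natural choice is to pick $z_0 = -(a+c|\xi)/(2\beta)$, the value that minimises $\beta z_0^2 + (a+c|\xi)z_0$. Plugging this back into (H) and dropping the non-negative contribution $\ep (z_0)^2$ on the right, I obtain
$$(B\xi|\xi) - \frac{(a+c|\xi)^2}{4\beta} \ge \ep|\xi|^2.$$
Multiplying by $\beta$ and expanding the square yields
$$\beta(B\xi|\xi) - \tfrac14(a|\xi)^2 - \tfrac12(a|\xi)(c|\xi) - \tfrac14(c|\xi)^2 \ge \ep\beta|\xi|^2.$$

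The final step is to absorb the two pure squares using the elementary bound $\tfrac14(a|\xi)^2 + \tfrac14(c|\xi)^2 \ge \tfrac12(a|\xi)(c|\xi)$, which follows from $((a|\xi)-(c|\xi))^2\ge 0$. Substituting this lower bound replaces $-\tfrac14(a|\xi)^2 - \tfrac12(a|\xi)(c|\xi)-\tfrac14(c|\xi)^2$ by $-(a|\xi)(c|\xi)$ on the left, and the inequality becomes exactly the desired $\beta(B\xi|\xi) - (a|\xi)(c|\xi) \ge \ep\beta|\xi|^2$. I do not anticipate any real obstacle; the only subtlety is recognising that the $\tfrac12$ in front of the symmetrised tensor is precisely what is needed to match the cross-term produced by expanding $(a+c|\xi)^2$, with the remaining square terms being mopped up by AM--GM.
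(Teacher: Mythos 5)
Your proof is correct and is essentially the same argument as the paper's, merely written scalar-wise rather than in tensor form: the paper completes the square in $z_0$, sets the square to zero, and then uses the tensor identity $d\otimes d = a\otimes c+c\otimes a + (a-c)\otimes(a-c) + 2(a\otimes c + c\otimes a) - (a\otimes c + c\otimes a)$ together with positive semidefiniteness of $(a-c)\otimes(a-c)$; your explicit choice $z_0=-(a+c|\xi)/(2\beta)$ is exactly the minimizer from completing the square, and your AM--GM step $\tfrac14((a|\xi)-(c|\xi))^2\ge 0$ is exactly the paper's $((a-c)\otimes(a-c)\xi|\xi)\ge 0$ tested against $\xi$.
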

\bpr Hypothesis (H) reads
$$\beta z_0^2+(d|z_1)z_0+(Bz_1|z_1)\ge\ep(z_0^2+|z_1|^2),$$
where $d:=a+c$. Observe that the left side of this inequality can be
rewritten as
$$\left(\sqrt{\beta}z_0+\frac{1}{2\sqrt{\beta}}(d|z_1)\right)^2+\left(\left(B-\frac{1}{4\beta}(d\otimes d)\right)z_1\Big|z_1\right).$$
For a fixed $z_1\in\R^n$ we choose $z_0\in\R$ in such a way that the
squared bracket is equal to 0. Thus we obtain the estimate
$$(\beta Bz_1|z_1)-\frac{1}{4}((d\otimes d)z_1|z_1)\ge
\ep\beta|z_1|^2,$$ valid for all $z_1\in\R^n$. By the definition of
$d$ it holds that
$$d\otimes d=a\otimes c+c\otimes a+a\otimes a+c\otimes c,$$
hence we obtain the identity \begin{align*}\beta
B-\frac{1}{2}(a\otimes c+c\otimes a)&=\beta B-\frac{1}{4}(d\otimes
d)+\frac{1}{4}(a\otimes a+c\otimes c-a\otimes c-c\otimes a)\\
&=\beta B-\frac{1}{4}(d\otimes
d)+\frac{1}{4}(a-c)\otimes(a-c).\end{align*} Since the matrix
$(a-c)\otimes(a-c)$ is positive semi-definite we finally obtain the
assertion.

\epr Here is the main result on optimal $L_p$-regularity of
\eqref{GR}.
\begin{thm}\label{thmGR}
Let $1<p<\infty$ and assume that (H) holds true. Then \eqref{GR}
admits a unique solution
$$u\in H_{p}^1(J;H_p^1(\R^n))\cap L_{p}(J;H_p^3(\R^n))=:Z^1,$$
$$\mu\in L_{p}(J;H_p^2(\R^n))=:Z^2,$$
if and only if the data is subject to the following conditions.
\begin{enumerate}
\item $f\in L_{p}(J;L_p(\R^n))=:X^1$,
\item $g\in L_{p}(J;H_p^1(\R^n))=:X^2$,
\item $u_0\in B_{pp}^{3-2/p}(\R^n)=:X_p$.
\end{enumerate}
\end{thm}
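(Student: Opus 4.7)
Necessity of the data regularity is routine: the trace $u(0)$ of a function in $H^1_p(J;H^1_p)\cap L_p(J;H^3_p)$ lies in the real interpolation space $B^{3-2/p}_{pp}(\R^n)$, and $f,g$ can be read off from the equations and clearly inherit the stated regularity. For sufficiency, the plan is to apply the spatial Fourier transform in $x$. The Fourier symbol $1-i(c|\xi)$ of the operator $1-c\cdot\nabla$ has real part $1$, so it is invertible pointwise and the second equation can be solved for $\hat\mu$ and substituted into the first. This yields a scalar ODE in $t$, parametrized by $\xi\in\R^n$,
\beq
P(\xi)\,\pa_t\hat u + Q(\xi)\,\hat u = \bigl(1-i(c|\xi)\bigr)\hat f - (B\xi|\xi)\hat g,\quad \hat u(0)=\hat u_0,
\eeq
where $P(\xi):=(1-i(a|\xi))(1-i(c|\xi))+\beta(B\xi|\xi)$ and $Q(\xi):=(B\xi|\xi)|\xi|^2$.

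The decisive symbol estimate is
\beq
\Real P(\xi) \;=\; 1 + \beta(B\xi|\xi) - \tfrac12\bigl((a\otimes c + c\otimes a)\xi|\xi\bigr) \;\ge\; 1 + \ep\beta|\xi|^2,
\eeq
which is exactly Proposition \ref{HA}. Combined with the obvious bounds $|P(\xi)|\le C(1+|\xi|^2)$ and $|\Imag P(\xi)|\le C|\xi|$, this shows that $P(\xi)$ takes values in a sector $|\arg z|\le \pi/2-\th_0$ for some $\th_0>0$, and hence so does $A(\xi):=Q(\xi)/P(\xi)$. Thus $-A(D)$ is a scalar, sectorial Fourier multiplier of angle strictly less than $\pi/2$, and the Duhamel formula
\beq
\hat u(t,\xi)=e^{-tA(\xi)}\hat u_0(\xi)+\int_0^t e^{-(t-s)A(\xi)}P(\xi)^{-1}\bigl[(1-i(c|\xi))\hat f(s,\xi)-(B\xi|\xi)\hat g(s,\xi)\bigr]\,ds
\eeq
represents $\hat u$ explicitly.

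To pass to optimal $L_p$-regularity I would apply the vector-valued Mikhlin multiplier theorem to the symbols $\la P(\xi)/(\la P(\xi)+Q(\xi))$ and $Q(\xi)/(\la P(\xi)+Q(\xi))$ (with $\la$ in a suitable sector), together with their $\xi$-derivatives; equivalently, since the scalar symbol $A(\xi)$ admits a bounded $H^\infty$-calculus on $L_p(\R^n)$, maximal $L_p$-regularity of $\pa_t + A(D)$ follows by the Dore--Venni / Weis theorem. The pointwise bounds $|\xi|^{|\al|}|\pa^\al_\xi m(\xi)|\le C_\al$ required by Mikhlin are routine consequences of the sectoriality of $P$ and the chain rule. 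For $\mu$, plugging the ODE back into the representation $\hat\mu=(1-i(c|\xi))^{-1}(\beta\pa_t\hat u - |\xi|^2\hat u + \hat g)$... wait, $+\hat g$ and signs need rearranging, so more cleanly one obtains
\beq
\hat\mu(t,\xi) \;=\; \frac{|\xi|^2(1-i(a|\xi))}{P(\xi)}\,\hat u(t,\xi) \;+\; \frac{\beta}{P(\xi)}\,\hat f(t,\xi) \;+\; \frac{1-i(a|\xi)}{P(\xi)}\,\hat g(t,\xi),
\eeq
after using $|\xi|^2 P(\xi)-\beta Q(\xi) = |\xi|^2(1-i(a|\xi))(1-i(c|\xi))$. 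A further Mikhlin check, weighted by $(1+|\xi|^2)$, then gives $\mu\in L_p(J;H^2_p)$.

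The main obstacle is that this system is \emph{not} a standard parabolic problem: the time derivative is twisted by the first-order operator $1-a\cdot\nabla$, whose Fourier symbol enters as a factor in the order-two polynomial $P(\xi)$. One has to show that the pseudo-differential quotient $Q(\xi)/P(\xi)$ still enjoys sectoriality of angle strictly less than $\pi/2$, and this is precisely where hypothesis \textbf{(H)}, routed through Proposition \ref{HA}, is indispensable: without it the indefinite cross term $-(a|\xi)(c|\xi)$ could destroy the positivity of $\Real P(\xi)$ and with it the parabolic character of the reduced equation.
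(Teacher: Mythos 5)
Your plan is correct and mathematically the same as the paper's: in both arguments the decisive input is Proposition \ref{HA}, which supplies the positivity $\Real P(\xi)\ge 1+\ep\beta|\xi|^2$ for $P(\xi)=(1-i(a|\xi))(1-i(c|\xi))+\beta(B\xi|\xi)$, and both conclude by Mikhlin-type multiplier estimates combined with an operator-valued maximal-regularity theorem. The packaging differs. The paper applies the joint Laplace--Fourier transform, works directly with the determinant $m(\la,\xi)=\la P(\xi)+Q(\xi)$ of the symbol matrix, proves the lower bound $|m(\la,\xi)|\ge C\bigl(|\la|(1+|\xi|^2)+|\xi|^4\bigr)$, first removes $g$ and $u_0$ through an auxiliary parabolic problem, and shifts the base space via $(I-\De)^{-1/2}$ so that the resulting operator $S$ with symbol $\bigl(\la(1+|\xi|^2)+|\xi|^4\bigr)/m(\la,\xi)$ can be bounded on $L_p(J;L_p(\R^n))$ by Mikhlin together with \cite[Theorem 4.5]{KaWei}. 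You instead Fourier-transform in $x$ only, eliminate $\hat\mu$, read the reduced problem as a scalar ODE $\pa_t\hat u+A(\xi)\hat u=\text{rhs}$ with $A(\xi)=Q(\xi)/P(\xi)$, and invoke sectoriality of $A$ plus Dore--Venni/Weis. These two views are equivalent: the determinant estimate is precisely the resolvent bound for $\la+A(D)$. The one step you leave implicit is the choice of base space: to land in $u\in H^1_p(J;H^1_p)\cap L_p(J;H^3_p)$ one must regard $A(D)$ as acting in $H^1_p(\R^n)$ with domain $H^3_p(\R^n)$ (which is exactly what the paper's $(I-\De)^{-1/2}$ reduction achieves), and accordingly verify that the forcing $P(\xi)^{-1}\bigl[(1-i(c|\xi))\hat f-(B\xi|\xi)\hat g\bigr]$ lies in $L_p(J;H^1_p)$ for $f\in L_p(J;L_p)$ and $g\in L_p(J;H^1_p)$ --- which it does, since $P^{-1}(1-i(c|\xi))$ gains one derivative and $P^{-1}(B\xi|\xi)$ is a bounded zeroth-order multiplier. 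With that point made explicit, your proof matches the paper's in substance.
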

\bpr Necessity is clear by substituting the solution $(u,\mu)\in
Z^1\times Z^2$ into the equations $\eqref{GR}_{1,2}$. This yields
the desired regularity for the functions $f,g$. The regularity for
the initial value $u_0$ follows from the trace theorem
$$H_{p}^1(J;H_p^1(\R^n))\cap
L_{p}(J;H_p^3(\R^n))\hookrightarrow C(J;B_{pp}^{3-2/p}(\R^n)),$$
where $B_{pp}^{3-2/p}(\R^n)=(H_p^1(\R^n),H_p^3(\R^n))_{1-1/p,p}$ is
the real interpolation space with exponent $1-1/p$ and parameter
$p$.

To prove sufficiency of the conditions (i)-(iii), we first apply the
operator $(I-\De)^{-1/2}$ to both equations in $\eqref{GR}$ and
define the new functions $w=(I-\De)^{-1/2}u$, $\eta=(I-\De)^{-1/2}
\mu$, $\tilde{f}=(I-\De)^{-1/2}f$, $\tilde{g}=(I-\De)^{-1/2} g$ and
$w_0=(I-\De)^{-1/2} u_0$. Then it holds that
$$\tilde{f}\in L_{p}(J;H_p^1(\R^n)),\quad \tilde{g}\in L_{p}(J;H_p^2(\R^n)),$$
$$w_0\in B_{pp}^{4-2/p}(\R^n)$$
and we are looking for a solution $(w,\eta)$ of the system
\begin{align}\label{GRmod}
w_t-\diver (a w_t)&=\diver(B\nabla \eta)+\tilde{f},\quad t>0,\ x\in\R^n,\nn\\
\eta-c\cdot\nabla\eta&=\beta w_t-\De w+\tilde{g},\quad t>0,\ x\in\R^n,\\
w(0)&=w_0,\quad t=0,\ x\in\R^n\nn,
\end{align}
in the regularity class
$$w\in H_{p}^1(J;H_p^2(\R^n))\cap L_{p}(J;H_p^4(\R^n)),$$
$$\eta\in L_{p}(\R_+;H_p^3(\R^n)).$$
In a next step we want to eliminate the functions $\tilde{g}$ and
$w_0$. To achieve this, let $w^*$ be the unique solution of the
problem
\begin{align*}
\beta w^*_t-\De w^*&=-\tilde{g},\quad t>0,\ x\in\R^n,\\
w^*(0)&=w_0,\quad t=0,\ x\in\R^n,
\end{align*}
with regularity
$$w^*\in H_{p}^1(J;L_p(\R^n))\cap L_{p}(J;H_p^2(\R^n)),$$
if and only if $\tilde{g}\in L_{p}(J\times\R^n)$ and $w_0\in
B_{pp}^{2-2/p}(\R^n)$. Here $J$ denotes the interval $[0,T]$. If we
even have $\tilde{g}\in L_{p}(J;H_p^2(\R^n))$ and $w_0\in
B_{pp}^{4-2/p}(\R^n)$ then by regularity theory we obtain
$$w^*\in H_{p}^1(J;H_p^2(\R^n))\cap L_{p}(J;H_p^4(\R^n)).$$
The pair of functions $(v,\eta)=(w-w^*,\eta)$ should now solve the
problem
\begin{align}\label{GR1}
\pa_tv-\diver (a \pa_tv)&=\diver(B\nabla \eta)+F,\quad t>0,\ x\in\R^n,\nn\\
\eta-c\cdot\nabla\eta&=\beta \pa_tv-\De v,\quad t>0,\ x\in\R^n,\\
v(0)&=0,\quad t=0,\ x\in\R^n,\nn
\end{align}
where $F$ is defined by
$$F=\tilde{f}+w_t^*-\diver(aw_t^*)\in L_{p}(J;H_p^1(\R^n)).$$
In order to solve \eqref{GR1} we take the Laplace transform in the
time variable and the Fourier transform in the spatial variable to
obtain
\begin{align*}
\la(1-i(a|\xi))\hat{v}&=-(B\xi|\xi)\hat{\eta}+\hat{F},\\
(1-i(c|\xi))\hat{\eta}&=(\beta\la+|\xi|^2)\hat{v}.
\end{align*}
This system of algebraic equations can be written in matrix form
$$\underbrace{\begin{bmatrix}\la(1-i(a|\xi)) & (B\xi|\xi)\\-(\beta\la+|\xi|^2) & (1-i(c|\xi))\end{bmatrix}}_{M(\la,\xi)}
\begin{bmatrix}\hat{v}\\ \hat{\eta}\end{bmatrix}=\begin{bmatrix}\hat{F}\\0\end{bmatrix},$$
where $\la\in\Sigma_{\phi},\ \phi>\pi/2$ and $\xi\in\R^n$ such
that $|\la|+|\xi|\neq 0$. Hence the unique solution to these
equations is given by
$$\begin{bmatrix}\hat{v}\\ \hat{\eta}\end{bmatrix}=\frac{1}{m(\la,\xi)}
\begin{bmatrix}(1-i(c|\xi)) & -(B\xi|\xi)\\(\beta\la+|\xi|^2) & \la(1-i(a|\xi))\end{bmatrix}\begin{bmatrix}\hat{F}\\0\end{bmatrix},$$
provided
$$m(\la,\xi):=\det M(\la,\xi)\neq 0.$$
To see this we consider the function
$\tilde{m}(\la,\xi):=m(\la,\xi)/\la$ given by
$$\tilde{m}(\la,\xi)=1-(a|\xi)(c|\xi)+\beta(B\xi|\xi)-i(a+c|\xi)+\beta(B\xi|\xi)|\xi|^2/\la=z_1(\xi)+z_2(\la,\xi),$$
where $z_2(\la,\xi):=\beta(B\xi|\xi)|\xi|^2/\la$. Let $\phi_j=\arg
z_j$; then a short computation shows that
$$|z_1+z_2|\ge C(\phi_1,\phi_2)(|z_1|+|z_2|),$$
provided that $|\phi_1-\phi_2|<\pi$. Here
$$C(\phi_1,\phi_2):=\frac{1}{\sqrt{2}}\min\{1,(1+\cos(\phi_1-\phi_2))^{1/2}\}.$$
From Proposition \ref{HA} and the Cauchy-Schwarz inequality we
obtain
$$\left|\frac{(a+c|\xi)}{1-(a|\xi)(c|\xi)+\beta(B\xi|\xi)}\right|\le C|a+c|\frac{|\xi|}{1+|\xi|^2}\le C|a+c|<\infty,$$
hence $|\phi_1|\le\si<\pi/2$ for all $\xi\in\R^n$. Since
$|\phi_2|=|\arg\la|<\phi$ we have
$$|\phi_1-\phi_2|\le \si+\phi<\pi,$$
provided $\phi>\pi/2$ is sufficiently close to $\pi/2$ and this in
turn yields together with Proposition \ref{HA}
$$|\tilde{m}(\la,\xi)|=|z_1+z_2|\ge C(|z_1|+|z_2|)\ge
C(1+|\xi|^2+|\xi|^4/|\la|)$$ or equivalently
\beq\label{estMP0}|m(\la,\xi)|\ge C(|\la|(1+|\xi|^2)+|\xi|^4).\eeq
Observe that the converse is also true, i.e. there is a constant
$C>0$ such that
$$|m(\la,\xi)|\le C(|\la|(1+|\xi|^2)+|\xi|^4).$$
In particular it holds that $m(\la,\xi)=0$ if and only if
$|\la|+|\xi|=0$.

Next, let $v_0, v_1\in\, _0H_{p}^1(J;H_p^2(\R^n))\cap
L_{p}(J;H_p^4(\R^n))$ be the unique solutions of
\begin{align*}
\pa_t(I-\De)v_0+\De^2v_0&=F-c\cdot \nabla F,\quad t>0,\ x\in\R^n,\\
v_0(0)&=0,
\end{align*}
and
\begin{align*}
\pa_t(I-\De)v_1+\De^2v_1&=(I-\De)^{1/2}F,\quad t>0,\ x\in\R^n,\\
v_1(0)&=0.
\end{align*}
The existence of $v_0$ and $v_1$ may be seen by the
Dore-Venni-Theorem. It follows that
$$\pa_t(I-\De)v+\De^2v=S(\pa_t(I-\De)+\De^2)v_0,$$
and $$(I-\De)^{3/2}\eta=S(I-\De)(\beta\pa_t-\De)v_1,$$ where the
linear operator $S$ is defined by its Fourier-Laplace symbol
$$\hat{S}(\la,\xi)=\frac{\la(1+|\xi|^2)+|\xi|^4}{m(\la,\xi)}.$$
Note that the assertion of Theorem \ref{thmGR} follows if we can
show that $S$ is a bounded operator from $L_{p}(J;L_p(\R^n))$ to
$L_{p}(J;L_p(\R^n))$. This will be a consequence of the classical
Mikhlin multiplier theorem and the Kalton-Weis Theorem \cite[Theorem
4.5]{KaWei}.

It is not difficult to show that the symbol $\hat{S}(\lambda,\xi)$ satisfies the Mikhlin condition \vspace{0.25cm}

\begin{itemize}
\item[\textbf{(M)}]\hspace{2.5cm} $\max_{|\al|\le [n/2]+1}\sup_{\xi\in\R^n}|\xi|^{|\al|}
|\pa_\xi^\al\hat{S}(\la,\xi)|<\infty$,\vspace{0.25cm}
\end{itemize}

\noindent where $\al\in\N_0^n$ is a multiindex and $[s]$ denotes the
largest integer not exceeding $s\in\R$.
The classical Mikhlin multiplier theorem then
implies that $\hat{S}$ is a Fourier multiplier in $L_p(\R^n;\C)$
w.r.t. the variable $\xi$ and this yields a holomorphic uniformly
bounded family
$\{\tilde{S}(\la)\}_{\la\in\Sigma_{\phi}}\subset\calB(L_p(\R^n;\C))$,
$\phi>\pi/2$. By \cite[Theorem 3.2]{GiWei} this family is also
$\calR$-bounded in $L_{p}(J;L_p(\R^n;\C))$ (for the notion of
$\calR$-boundedness we refer the reader to \cite{DHP1}). Finally,
since the operator $\pa_t$ admits a bounded $\calH^\infty$-calculus
with angle $\pi/2$ we obtain from \cite[Theorem 4.5]{KaWei} the
desired property of the operator $S$. For the functions
$u=(I-\De)^{1/2}w$ and $\mu =(I-\De)^{1/2}\eta$, this yields
$$u\in H_{p}^1(J;H_p^1(\R^n))\cap L_{p}(J;H_p^3(\R^n)),$$
as well as
$$\mu\in L_{p}(J;H_p^2(\R^n)),$$
and the proof is complete.

\epr

For later purpose we need a perturbation result. To be precise we
consider coefficients $a,c$ and $B$ with a small deviation from
constant ones, i.e.
$$a(x)=a_0+a_1(x),\ c(x)=c_0+c_1(x),\ B(x)=B_0+B_1(x),$$
with $a_1,c_1\in W_\infty^1(\R^n;\R^n)$, $B_1\in
W_\infty^1(\R^n;\R^{n\times n})$ and
$$|a_1|_\infty+|c_1|_\infty+|B_1|_\infty\le\om.$$
Furthermore we assume that $\diver a_1(x)=\diver c_1(x)=0$ for a.e.
$x\in\R^n$ and that the quadruple $(\beta,a_0,c_0,B_0)$ satisfies
(H). Observe that if $\omega>0$ is sufficiently small, then $(\beta,a(x),c(x),B(x))$ satisfy (H) as well for all $x\in\overline{\Omega}$, with a possibly smaller constant $\ep>0$.

We have the following result.
\begin{cor}\label{corGR}
Under the above assumptions on the coefficients the statement of
Theorem \ref{thmGR} remains true, if $\om>0$ is sufficiently small.
\end{cor}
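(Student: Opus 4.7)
The plan is a Neumann-series perturbation argument around the constant-coefficient isomorphism of Theorem \ref{thmGR}. Write $\calL = \calL_0 + \calP$, where $\calL_0$ is the operator associated with the constant quadruple $(\beta,a_0,c_0,B_0)$ --- an isomorphism $Z^1\times Z^2 \to X^1\times X^2\times X_p$ by Theorem \ref{thmGR} --- and (using $\diver a_1 = 0$)
\begin{align*}
\calP(u,\mu) = \begin{pmatrix} -a_1\cdot\nabla\partial_t u - \diver(B_1\nabla\mu) \\ -c_1\cdot\nabla\mu \end{pmatrix}.
\end{align*}
It suffices to show $\|\calP\|_{\calB(Z^1\times Z^2,\,X^1\times X^2)} < \|\calL_0^{-1}\|^{-1}$ for $\om$ sufficiently small.

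First I would handle the terms that pair an $L_\infty$-norm of $a_1$, $c_1$, or $B_1$ with the top-order derivative of $(u,\mu)$ available in $Z^1\times Z^2$: by H\"older, these contribute at most $C\om\,\|(u,\mu)\|_{Z^1\times Z^2}$ to $\|\calP(u,\mu)\|_{X^1\times X^2}$ and hence are arbitrarily small. After expanding $\diver(B_1\nabla\mu) = (\diver B_1)\cdot\nabla\mu + \sum_{ij} B_1^{ij}\partial_{ij}^2\mu$ and writing out $\nabla(c_1\cdot\nabla\mu)$ for the $X^2=L_p(J;H_p^1)$-norm, the remaining pieces carry derivatives of $a_1,c_1,B_1$ --- only bounded in $L_\infty$, not small in $\om$ --- against \emph{strictly lower-order} derivatives of $(u,\mu)$.

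These lower-order pieces are treated via the Ehrling-type interpolation
\begin{align*}
\|\nabla\mu\|_{L_p(J;L_p(\R^n))}\le\eta\,\|\mu\|_{L_p(J;H_p^2(\R^n))}+C_\eta\,\|\mu\|_{L_p(J;L_p(\R^n))},
\end{align*}
and analogous bounds for $u$: the $\eta$-part is absorbed into the top-order Neumann-series estimate, while the $C_\eta$-remainder in the weaker norm $L_p(J;L_p)$ is absorbed either by iterating over small subintervals of $J$ (exploiting $T^{1/p}$-smallness for lower-order terms) or by an exponential time-shift $(u,\mu)\mapsto(e^{-\la t}u,e^{-\la t}\mu)$, noting that $\calL_0 + \la$ remains an isomorphism with uniform constants by the symbol estimate $|m(\la,\xi)|\ge C(|\la|(1+|\xi|^2)+|\xi|^4)$ established in the proof of Theorem \ref{thmGR}. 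For $\om$ small enough (and $\la$ large, if that route is chosen), the perturbed operator is invertible by Neumann series and $(u,\mu)=\calL^{-1}(f,g,u_0)$ lies in $Z^1\times Z^2$ with the claimed estimates.

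The main technical obstacle is precisely this absorption step: smallness of $|a_1|_\infty+|c_1|_\infty+|B_1|_\infty$ does not imply smallness of $|\nabla a_1|_\infty+|\nabla c_1|_\infty+|\nabla B_1|_\infty$, so the pieces obtained by differentiating the coefficients cannot be killed directly by $\om$ but must be treated as genuine lower-order perturbations. Once these are brought under control the argument reduces to a routine Neumann iteration built on Theorem \ref{thmGR}.
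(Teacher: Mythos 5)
Your decomposition $\calL=\calL_0+\calP$ and the opening observation are fine, and you have correctly located the danger spot: derivatives of $a_1,c_1,B_1$ are only $L_\infty$-bounded, not $\om$-small, so they ride on lower-order derivatives of $(u,\mu)$ that must be absorbed some other way. The problem is that neither of your proposed absorption mechanisms actually closes the argument, and the paper's resolution is of a genuinely different nature.

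The Ehrling route fails for $\mu$. The $C_\eta$-remainder it produces is $\|\mu\|_{L_p(J_\de;L_p)}$, and the paper points out explicitly that $\mu$ carries \emph{no time regularity} in $Z^2=L_p(J;H_p^2)$, so this quantity does not shrink with $\de$; there is no $T^{1/p}$-gain on small subintervals for $\mu$, in contrast to $u$. The exponential-shift route fails for the same underlying reason: from the symbol calculus in the proof of Theorem \ref{thmGR} one reads off $\hat{\mu}\sim(\beta\la+|\xi|^2)\hat{F}/m(\la,\xi)$, and at $\xi=0$ the ratio $(\beta\la)/m(\la,0)=\beta\la/\la=\beta$ is $O(1)$ as $|\la|\to\infty$, so a large shift does not make the $\mu\mapsto\mu$ resolvent small in $L_p(J;L_p)$. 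Hence the perturbation $\calP$ cannot be made small in $\calB(Z^1\times Z^2,X^1\times X^2)$, and a direct Neumann series around $\calL_0$ does not converge. This is the gap.

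What the paper does instead is exploit an elliptic structure that your estimate never touches. By substituting the evolution equation into the constitutive equation one obtains a second-order \emph{elliptic} problem for $\mu$ alone, with $u$ entering only through $\De u$; this yields Proposition \ref{proGR2} (and its half-space analogue \ref{proHS2}), namely $|\mu|_{L_p(J_\de;H_p^1)}\le C(|f|+|g|+|u|_{L_p(J_\de;H_p^2)})$. The right-hand side is a genuinely lower-order norm of $u$, which by the mixed-derivative theorem and the vanishing time trace does shrink like $\de^{1/2p}$. Note that this estimate uses the equations themselves, so it is an a priori estimate for solutions, not a bound on $\calP$; accordingly the paper does not conclude by Neumann series but first deduces that $\calL$ is injective with closed range (semi-Fredholm), and then establishes surjectivity by a homotopy in the coefficients $(1-\t)(\beta,a_0,c_0,B_0)+\t(\beta,a,c,B)$ together with the continuity of the Fredholm index. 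Without the elliptic estimate for $\mu$ the absorption step in your argument has no valid replacement, and without the Fredholm homotopy you have no route to existence even if you had the a priori bound.
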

\bpr By a shift of the function $u$ we may assume that
$u_0=g=0$. For the time being we consider an interval
$J_\de=[0,\de]$, with a suitable small $\de>0$, to be chosen later.
The corresponding function spaces are denoted by $X_\de^j$ and
$Z_\de^j$. Moreover
$$_0Z_\de^1:=\{u\in Z_\de^1:u|_{t=0}=0\}.$$
Assume that we already know a solution $(u,\mu)\in\,_0Z_\de^1\times
Z_\de^2$ of \eqref{GR}. Thanks to Theorem \ref{thmGR} we have a
solution operator $\calS\in\calB(X_\de^1\times X_\de^2\times
X_p;\,_0Z_\de^1\times Z_\de^2)$ for the constant coefficient case
$(\beta,a_0,c_0,B_0)$. With the help of $\calS$ we write the
solution in the following way.
$$\begin{bmatrix}u\\ \mu\end{bmatrix}=\calS\begin{bmatrix}f\\0\\0\end{bmatrix}+\calS \calT\begin{bmatrix}u\\ \mu\end{bmatrix},$$
where
$$\calT\begin{bmatrix}u\\ \mu\end{bmatrix}:=\begin{bmatrix}\diver(a_1(x)\pa_t
u)+\diver(B_1(x)\nabla \mu)\\ c_1(x)\cdot\nabla
\mu\\0\end{bmatrix}.$$ From the boundedness of $\calS$ and since
$\diver a_1(x)=0,\ x\in\R^n$, we obtain the estimate
\begin{align}\begin{split}\label{GR2}
|(u,\mu)|_{Z_\de^1\times Z_\de^2}&\le C(|f|_{X_\de^1}+|\calT(u,\mu)|_{X_\de^2})\\
&\le C(|f|_{X_\de^1}+\om|(u,\mu)|_{Z_\de^1\times
Z_\de^2}+|\nabla\mu|_{L_{p}(J_\de;L_p(\R^n))}),
\end{split}\end{align} for some constant $C>0$. The problem is that
the term $|\nabla\mu|$ does not become small in
$L_{p}(J_\de;L_p(\R^n))$, since the function $\mu$ has no regularity
w.r.t. the variable $t$. However, we have the following result.
\begin{pro}\label{proGR2}
Let $(u,\mu)\in Z_\de^1\times Z_\de^2$ be a solution of \eqref{GR}
with $g=u_0=0$. Assume furthermore that the (variable) coefficients
satisfy the above assumptions. Then there exists a constant $C>0$,
independent of $J_\de$, such that the estimate \beq
|\mu|_{L_{p}(J_\de;H_p^1(\R^n))}\le
C(|f|_{X_\de^1}+|g|_{X_\de^2}+|u|_{L_{p}(J_\de;H_p^2(\R^n))}) \eeq is valid.
\end{pro}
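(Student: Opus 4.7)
The plan is to combine the two equations of \eqref{GR} into a stationary elliptic equation for $\mu$ in which no time derivative appears at all, and then apply $L_p$-elliptic regularity slice by slice in $t$. Concretely, I would solve the second equation for $\beta\pa_tu=\mu-c\cdot\nabla\mu+\De u-g$ and apply the operator $(I-a(x)\cdot\nabla)$ to both sides; on the left the first equation gives $(I-a\cdot\nabla)\pa_tu=\diver(B\nabla\mu)+f$, so the time derivative drops out and one is left with an identity involving only $\mu$, $\nabla u$, $\De u$, $f$ and $g$.

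Thanks to $\diver a=\diver c=0$ every term in this identity admits a divergence-form representation: $c\cdot\nabla\mu=\diver(\mu c)$, $a\cdot\nabla\mu=\diver(\mu a)$, $a\cdot\nabla(c\cdot\nabla\mu)=\diver[(a\otimes c)\nabla\mu]$, and similarly $\De u=\diver\nabla u$, $a\cdot\nabla\De u=\diver(a\De u)$, $a\cdot\nabla g=\diver(ag)$. Collecting the pieces yields, pointwise in $t\in J_\de$, the divergence-form equation
\beq\label{plan-ell}
-\diver\bigl[(\beta B-a\otimes c)\nabla\mu+(a+c)\mu\bigr]+\mu=\diver G+h,
\eeq
where $G:=-\nabla u+a\De u-ag$ and $h:=\beta f+g$.

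The left-hand side of \eqref{plan-ell} defines a uniformly elliptic operator $L$: by Proposition~\ref{HA} the symmetric part of its principal matrix, $\beta B-\tfrac12(a\otimes c+c\otimes a)$, is bounded below by $\ep\beta I$, and testing the associated bilinear form against $\mu$ and integrating by parts (using $\diver(a+c)=0$ to kill the first-order contribution) produces the coercivity estimate $\ep\beta\|\nabla\mu\|_{L_2}^2+\|\mu\|_{L_2}^2$. For the constant-coefficient leading operator $L_0$ with coefficients $(a_0,c_0,B_0)$, the Fourier symbol
$\hat L_0(\xi)=\bigl((\beta B_0-a_0\otimes c_0)\xi\bigr)\cdot\xi-i(a_0+c_0)\cdot\xi+1$
has real part $\ge\ep\beta|\xi|^2+1$ and hence modulus bounded below by $c(1+|\xi|^2)$, so Mikhlin's multiplier theorem realises $L_0$ as an isomorphism $H_p^1(\R^n)\to H_p^{-1}(\R^n)$. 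Since the coefficients of $L-L_0$ are $L_\infty$-small of order $\om$, a Neumann series argument makes $L$ itself an isomorphism between the same spaces, provided $\om$ is sufficiently small.

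Finally, because $a\in L_\infty(\R^n)$ one has the pointwise-in-$t$ bound
$|G(t)|_{L_p(\R^n)}+|h(t)|_{L_p(\R^n)}\le C\bigl(|u(t)|_{H_p^2(\R^n)}+|g(t)|_{H_p^1(\R^n)}+|f(t)|_{L_p(\R^n)}\bigr)$,
so $\diver G+h$ lies in $H_p^{-1}(\R^n)$ with the same bound. Applying $L^{-1}$ on each time slice and taking the $L_p$-norm in $t$ produces the desired inequality, and the constant $C$ is independent of $J_\de$ because the argument is purely spatial. The delicate point is the $L_p$-solvability of the non-self-adjoint operator $L$; both the sign information from Proposition~\ref{HA} and the smallness of $\om$ enter essentially to close the perturbation around $L_0$.
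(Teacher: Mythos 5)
Your argument is correct, but it follows a genuinely different route from the paper's intended proof (the Appendix proof of Proposition~\ref{proHS2}, adapted to $\R^n$). Both proofs start identically: eliminate $\pa_tu$ using $\diver a=0$, producing a time-slicewise elliptic problem for $\mu$ whose right-hand side $\diver(a\De u)-\De u+\cdots$ lies only in $H_p^{-1}$, so the real issue is inverting the operator at the $H_p^1\to H_p^{-1}$ level. You exploit the fact that, because $\diver a=\diver c=0$, the whole operator is already in divergence form with $L_\infty$ coefficients, $L\mu=-\diver\bigl[(\beta B-a\otimes c)\nabla\mu+(a+c)\mu\bigr]+\mu$; Proposition~\ref{HA} gives $\Real\hat L_0(\xi)\ge\ep\beta|\xi|^2+1$ for the constant-coefficient part $L_0$, so $L_0:H_p^1\to H_p^{-1}$ is an isomorphism by Mikhlin, and $L$ itself follows by a Neumann series using the $\om$-smallness of $a_1,c_1,B_1$. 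The paper instead symmetrizes the principal part (splitting $\calA=\calA_0+\calA_1$), proves $L_p$-dissipativity of the $L_p$-realization $A_0$ of $\calA_0$ by integration by parts, and then invokes Amann's interpolation--extrapolation scales (\cite[Theorem V.2.1.3]{Ama}) to transfer the isomorphism $D(A_0)\to L_p$ down to $E_{1/2}=H_p^1\to E_{-1/2}$. Your approach is more elementary and self-contained on $\R^n$; the paper's heavier machinery is really dictated by the half-space case of Proposition~\ref{proHS2}, where boundary conditions preclude a plain Fourier-multiplier argument, and is then simply reused for the whole space. One minor remark: the $L_2$-coercivity estimate in your third paragraph plays no role once you pass to Mikhlin, so it could be dropped.
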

\bpr
The proof follows the lines of the proof of Proposition \ref{proHS2}.
\epr

\noindent Owing to \eqref{GR2} and Proposition \ref{proGR2} we
obtain the estimate
\begin{align}\begin{split}\label{GR3}
|(u,\mu)|_{Z_\de^1\times Z_\de^2}&\le
C(|f|_{X_\de^1}+\om|(u,\mu)|_{Z_\de^1\times
Z_\de^2}+|u|_{L_{p}(J_\de;H_p^2(\R^n))}).
\end{split}\end{align}
The mixed derivative theorem and Sobolev embedding yield
$$_0H_p^1(J_\de;H_p^1(\R^n))\cap
L_p(J_\de;H_p^3(\R^n))\hookrightarrow\,
_0H_p^{1/2}(J_\de;H_p^2(\R^n))\hookrightarrow
L_{2p}(J_\de;H_p^2(\R^n)),$$ hence by H\"older's inequality we
obtain $|u|_{L_p(J_\de;H_p^2(\R^n))}\le\de^{1/2p}C|u|_{Z^1_\de}$ and
the constant $C>0$ does not depend on $\de>0$, since $u|_{t=0}=0$.
Choosing first $\om>0$, then $\de>0$ small enough and shifting back
the function $u$, we obtain from \eqref{GR3} the estimate
$$|(u,\mu)|_{Z_\de^1\times Z_\de^2}\le
C(|f|_{X_\de^1}+|g|_{X_\de^2}+|u_0|_{X_p}),$$ where $C>0$ is some
constant. The latter estimates show that the operator $L\in\calB(Z_\de^1\times
Z_\de^2;X_\de^1\times X_\de^2\times X_p)$, defined by
$$L\begin{bmatrix}u\\ \mu\end{bmatrix}=\begin{bmatrix}\pa_t
u-\diver(a\pa_t u)-\diver (B\nabla\mu)\\
\mu-c\cdot\nabla\mu-\beta\pa_t u+\De u\\u|_{t=0}\end{bmatrix},$$
is injective and has closed range, hence $L$ is a semi-Fredholm operator. Replacing the coefficients
$(\beta,a,c,B)$ by
$$(\beta,a_\tau,c_\tau,B_\tau):=(1-\tau)(\beta,a_0,c_0,B_0)+\t(\beta,a,c,B),\ \tau\in[0,1]$$
we may conclude from the considerations above that for each
$\tau\in[0,1]$ the corresponding operator $L_\tau$ is semi-Fredholm as well. The continuity of the Fredholm index yields that
the index of $L_1=L$ is 0, since $L_0$ is an isomorphism, by Theorem
\ref{thmGR}. A successive application of the above procedure yields
the claim for the time interval $J=[0,T]$. The proof is complete.

\epr


\section{The Linear Cahn-Hilliard-Gurtin Problem in
$\R_+^n$}\label{CHGsec2}

In order to treat the case of a half space, we consider first
constant coefficients which are subject to the following
assumptions: $B=bI$ and $(a|e_n)=(c|e_n)=0$, where $e_n:=[0,\ldots,0,-1]^{\textsf{T}}$ is the outer unit normal at $\partial\R_+^n$. Furthermore we assume that $(\beta,a,c,B)$ satisfy (H), whence it holds that $b\ge \ep>0$. Moreover the
boundary conditions on $a$ and $c$ yield that the last components of
$a$ and $c$ are identically zero. We are interested to solve the
following system in $\R_+^n$.
    \begin{align}\begin{split}\label{HS}
    \pa_tu-\diver (a \pa_tu)&=b\De\mu+f,\quad t>0,\ (x',y)\in\R_+^n,\\
    \mu-c\cdot\nabla\mu&=\beta \pa_tu-\De u+g,\quad t>0,\ (x',y)\in\R_+^n,\\
    \pa_y \mu&=h_1,\quad t>0,\ x'\in\R^{n-1},\ y=0,\\
    \pa_y u&=h_2,\quad t>0,\ x'\in\R^{n-1},\ y=0,\\
    u(0)&=u_0,\quad t=0,\ (x',y)\in\R_+^n.
    \end{split}\end{align}
Note that the conormal boundary condition $(b\nabla\mu|e_n)=h_1$
is equivalent to $-b\partial_y\mu=h_1$, where $b>0$ is constant.
Hence it suffices to consider the boundary condition $\pa_y\mu=h_1$
with some scaled function $h_1$. Concerning optimal $L_p$-regularity
of \eqref{HS} we have the following result.
\begin{thm}\label{HSthm}
Let $1<p<\infty$, $p\neq 3/2$ and assume that (H) holds true. Then
\eqref{HS} admits a unique solution
$$u\in H_{p}^1(J;H_p^1(\R_+^n))\cap L_{p}(J;H_p^3(\R_+^n))=:Z^1,$$
$$\mu\in L_{p}(J;H_p^2(\R_+^n))=:Z^2,$$
if and only if the data is subject to the following conditions.
\begin{enumerate}
\item $f\in L_{p}(J;L_p(\R_+^n))=:X^1$,
\item $g\in L_{p}(J;H_p^1(\R_+^n))=:X^2$,
\item $h_1\in L_{p}(J;W_p^{1-1/p}(\R^{n-1}))=:Y^1$,
\item $h_2\in W_{p}^{1-1/2p}(J;L_p(\R^{n-1}))\cap L_{p}(J;W_p^{2-1/p}(\R^{n-1}))=:Y^2$,
\item $u_0\in B_{pp}^{3-2/p}(\R_+^n)=:X_p$.
\item $\pa_y u_0=h_2|_{t=0}$ if $p>3/2$.
\end{enumerate}
\end{thm}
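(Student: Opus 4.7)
Necessity of (i)--(v) follows by substituting $(u,\mu)\in Z^1\times Z^2$ into \eqref{HS}, using the anisotropic trace theorems for $\pa_y\mu|_{y=0}$ and $\pa_y u|_{y=0}$, and the embedding $Z^1\hookrightarrow C(J;B_{pp}^{3-2/p}(\R_+^n))$ for the initial trace. Condition (vi) is forced exactly when $p>3/2$, since then the normal derivative $\pa_y u$ admits a well-defined trace on the corner $\{t=0,\ y=0\}$ which must agree with $h_2|_{t=0}$; at the exceptional value $p=3/2$ this joint trace is no longer well defined.

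For sufficiency, I would first reduce to the case $f=g=0$ and $u_0=0$. Since $B=bI$ and $(a|e_n)=(c|e_n)=0$, the system \eqref{HS} is invariant under even reflection in $y$. I extend $f,g,u_0$ evenly to the full space, apply Theorem \ref{thmGR} to obtain $(u^*,\mu^*)$ in the corresponding full-space regularity class, and note that by symmetry $\pa_y u^*|_{y=0}=\pa_y\mu^*|_{y=0}=0$. Subtracting, $(v,\eta):=(u-u^*,\mu-\mu^*)$ solves \eqref{HS} with zero interior data and boundary data $\tilde h_j:=h_j$ still in $Y^j$, the compatibility condition (vi) being preserved thanks to $u_0=0$.

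Next, I perform a Laplace transform in $t$ and a tangential Fourier transform in $x'$. With $\al:=1-i(a'|\xi')$ and $\ga:=1-i(c'|\xi')$, elimination of $\hat\eta$ leads to a fourth-order ODE for $\hat v$ whose characteristic polynomial is
\[
b\rho^4-b(2|\xi'|^2+\beta\la)\rho^2+b|\xi'|^2(|\xi'|^2+\beta\la)+\al\ga\la=0,
\]
with roots (setting $w=\rho^2$)
\[
w_\pm=|\xi'|^2+\tfrac{\beta\la}{2}\pm\tfrac{1}{2}\sqrt{\beta^2\la^2-4\al\ga\la/b}.
\]
Proposition \ref{HA} and the tangentiality of $a,c$ let one verify that for $\la\in\Sigma_\phi$ with $\phi>\pi/2$ close enough to $\pi/2$ and $(\la,\xi')\neq 0$, one has $w_\pm\notin(-\infty,0]$, so that the principal branches $\rho_1=\sqrt{w_+}$, $\rho_2=\sqrt{w_-}$ are well defined, distinct, and satisfy $\Real\rho_j\gtrsim |\la|^{1/2}+|\xi'|$. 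The bounded ansatz $\hat v(y)=\ze_1 e^{-\rho_1 y}+\ze_2 e^{-\rho_2 y}$, together with the two Neumann boundary conditions, produces a $2\times 2$ Lopatinskii system whose determinant is proportional to $\rho_1\rho_2(\rho_1-\rho_2)$ and is therefore non-zero.

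It remains to extract the Fourier-Laplace symbols of every derivative of $v$ and $\eta$ appearing in the target norm of $Z^1\times Z^2$ and to verify that these satisfy the Mikhlin condition (M) in $\xi'$ uniformly for $\la\in\Sigma_\phi$, with the appropriate parabolic $\la$-homogeneity. The Mikhlin theorem then yields a holomorphic, uniformly bounded operator family on $L_p(\R^{n-1};L_p(\R_+))$, which is $\calR$-bounded by \cite[Theorem 3.2]{GiWei}; together with the bounded $\calH^\infty$-calculus of $\pa_t$ and \cite[Theorem 4.5]{KaWei} this produces the required $L_p$-estimates, exactly as in the proof of Theorem \ref{thmGR}. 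The main technical obstacle will be the uniform symbol analysis across the parabolic transition region $|\xi'|\sim|\la|^{1/2}$, where $\rho_1$ and $\rho_2$ switch their qualitative behaviour, together with the non-degeneracy of the Lopatinskii determinant; this is exactly where Hypothesis (H) and the tangentiality of $a,c$ enter, via Proposition \ref{HA}.
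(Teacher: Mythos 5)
Your proposal takes a genuinely different route from the paper, but the reduction step contains a gap that the paper's ordering is specifically designed to avoid.

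The gap is in the claim that one can extend $f,g,u_0$ evenly to $\R^n$, solve the full-space problem, and subtract to reach $f=g=0$, $u_0=0$. For $p>3/2$ the even extension of $u_0\in B^{3-2/p}_{pp}(\R_+^n)$ lies in $B^{3-2/p}_{pp}(\R^n)$ only if $\pa_y u_0|_{y=0}=0$; the hypotheses give only $\pa_y u_0|_{y=0}=h_2|_{t=0}$, which need not vanish. The same obstruction reappears downstream: after your subtraction the reduced problem has zero initial datum and boundary datum $\tilde h_2=h_2$, so condition (vi) would now require $h_2|_{t=0}=0$ — not the original compatibility. Your remark that (vi) is ``preserved thanks to $u_0=0$'' is therefore backwards: setting $u_0=0$ makes (vi) \emph{stronger}, and it fails unless $\pa_y u_0|_{y=0}$ already vanished.

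The paper circumvents this by ordering the reductions differently. It first removes $h_1$ by an elliptic solve; then removes $u_0$ via a \emph{generic} Sobolev extension (not an even one) and a full-space heat solve, whose restriction $v_1$ contributes a normal trace absorbed into $\bar h_2=h_2-\pa_y v_1|_{y=0}$; by the original compatibility $\bar h_2|_{t=0}=0$, so $\bar h_2$ and the zero initial datum can be removed by a half-space parabolic solve. Only once $h_1=h_2=u_0=0$ does it extend the remaining interior data $f_1,g_1$ evenly — which is harmless since they live in $L_p$ and $H_p^1$, where even reflection needs no trace condition — and invoke Theorem \ref{thmGR} plus the evenness of the solution to get $\pa_y u_2|_{y=0}=\pa_y\mu_2|_{y=0}=0$ for free.

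Your second half (tangential Fourier--Laplace transform, fourth-order ODE, Lopatinskii determinant, Mikhlin plus Kalton--Weis) is a legitimate and standard alternative to the paper's reflection argument, and your characteristic polynomial and root formulas check out. But it is not what the paper does, and it would require the rather delicate uniform symbol estimates across the parabolic transition region that you yourself flag as the main technical obstacle. The paper deliberately dodges the half-space symbol analysis entirely: once the boundary data are gone, even reflection lets it reuse the full-space theorem verbatim. If you want to keep your approach, you must repair the reduction (e.g., by first absorbing $u_0$ and $h_2$ exactly as the paper does, rather than extending $u_0$ evenly), and then the Lopatinskii route could in principle be carried through — at the cost of substantially more symbol work than the paper needs.
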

\bpr The necessity part follows from the equations and trace theory,
cf. \cite{DHP2}. Concerning sufficiency, we first reduce \eqref{HS} to the case $h_1=h_2=u_0=0$. For this purpose we
solve the elliptic problem
\begin{align}\label{HS1}\begin{split}(I-\De_{x'})\eta-\pa_y^2\eta=0,&\quad x'\in\R^{n-1},\ y>0,\\
\pa_y\eta=h_1,&\quad x'\in\R^{n-1},\ y=0.\end{split}\end{align}
Define $\tilde{L}:=(I-\De_{x'})^{1/2}$ in $L_p(\R^{n-1})$, with
$D(\tilde{L})=H_p^1(\R^{n-1})$ and let $L$ denote the natural
extension of $\tilde{L}$ to $L_{p,loc}(\R_+;L_p(\R^{n-1}))$, that is
$D(L)=L_{p,loc}(\R_+;H_p^1(\R^{n-1}))$ and $Lu=\tilde{L} u$ for each
$u\in D(L)$. Then the unique solution $\eta$ of \eqref{HS1} is given
by
$$\eta(y)=-L^{-1}e^{-Ly}h_1.$$
Since $h_1\in
L_{p}(J;W_p^{1-1/p}(\R^{n-1}))=D_L(1-1/p,p)$, we have $e^{-Ly}h_1\in
D(L)$ and therefore $\eta\in L_{p}(J;H_p^2(\R^{n}_+))$, with
$\pa_y\eta|_{y=0}=h_1$. In order to remove $h_2 $ and $u_0$, we
solve the initial boundary value problem
\begin{align}\label{HS2}\begin{split}\beta\pa_tv-\De_{x'} v-\pa_y^2v=0,&\quad t>0,\ x'\in\R^{n-1},\ y>0,\\
\pa_y v=h_2,&\quad t>0,\ x'\in\R^{n-1},\ y=0,\\
v(0)=u_0,&\quad t=0,\ x'\in \R^{n-1},\ y>0.\end{split}\end{align} To
this end we extend $u_0\in B_{pp}^{3-2/p}(\R_+^n)$ to a function
$\tilde{u}_0\in B_{pp}^{3-2/p}(\R^n)$ and solve the heat equation
$$\beta\pa_t \tilde{v}-\De \tilde{v}=0,\ t>0,\ x\in\R^n,\quad
\tilde{v}(0)=\tilde{u}_0,\ t=0,\ x\in\R^n,$$ in
$L_{p}(J;H_p^1(\R^n))$. This yields a solution
$$\tilde{v}\in H_{p}^1(J;H_p^1(\R^n))\cap L_{p}(J;H_p^3(\R^n)).$$
If $v_1:=P\tilde{v}$ denotes the restriction of $\tilde{v}$ to the
half space $\R_+^n$, the function $v_2:=v-v_1$ should solve the
initial boundary value problem
\begin{align}\label{HS3}\begin{split}\beta\pa_tv_2-\De_{x'} v_2-\pa_y^2v_2=0,&\quad t>0,\ x'\in\R^{n-1},\ y>0,\\
\pa_y v_2=\bar{h}_2,&\quad t>0,\ x'\in\R^{n-1},\ y=0,\\
v_2(0)=0,&\quad t=0,\ x'\in \R^{n-1},\ y>0,\end{split}\end{align}
where $\bar{h}_2:=h_2-\pa_y v_1|_{y=0}$. Set
$v_3=(I-\De_{x'})^{1/2}v_2$. Then $v_3$ is a solution of
\begin{align}\label{HS3a}\begin{split}\beta\pa_tv_3-\De_{x'} v_3-\pa_y^2v_3=0,&\quad t>0,\ x'\in\R^{n-1},\ y>0,\\
\pa_y v_3=h_3,&\quad t>0,\ x'\in\R^{n-1},\ y=0,\\
v_3(0)=0,&\quad t=0,\ x'\in \R^{n-1},\ y>0.\end{split}\end{align}
with $h_3=(I-\De_{x'})^{1/2}\bar{h}_2\in\,
_0W_p^{1/2-1/2p}(J;L_p(\R^{n-1}))\cap L_p(J;W_p^{1-1/p}(\R^{n-1}))$.
We define $L=(\beta\pa_t-\De_{x'})^{1/2}$ with natural domain
$$D(L)=\, _0H_{p}^{1/2}(J;L_p(\R^{n-1}))\cap
L_{p}(J;H_p^1(\R_+^{n-1})).$$ Then, the unique solution $v_3$ of
\eqref{HS3a} is given by
$$v_3(y)=-L^{-1}e^{-Ly}h_3,$$
and $h_3\in D_L(1-1/p,p)$. This yields
$$v_3\in\, _0H_{p}^1(J;L_p(\R_+^n))\cap L_{p}(J;H_p^2(\R^n_+)).$$
On the other hand, if we consider the function $v_4:=\pa_y v_2$ as
the solution of
\begin{align}\label{HS3b}\begin{split}\beta\pa_tv_4-\De_{x'} v_4-\pa_y^2v_4=0,&\quad t>0,\ x'\in\R^{n-1},\ y>0,\\
v_4=\bar{h}_2,&\quad t>0,\ x'\in\R^{n-1},\ y=0,\\
v_4(0)=0,&\quad t=0,\ x'\in \R^{n-1},\ y>0,\end{split}\end{align} we
obtain $v_4(y)=e^{-Ly}\bar{h}_2$ and $\bar{h}_2\in D_L(2-1/p,p)$.
This yields
$$v_4\in\, _0H_{p}^1(J;L_p(\R_+^n))\cap L_{p}(J;H_p^2(\R^n_+)).$$
From the regularity of $v_3$ and $v_4$ we may conclude that
$$v_2\in\, _0H_{p}^1(J;H_p^1(\R_+^n))\cap L_{p}(J;H_p^3(\R_+^n)).$$
Now the functions $u_1:=u-v$ and $\mu_1:=\mu-\eta$, with
$v=v_1+v_2$, should solve the system
\begin{align}\label{HS4}
\pa_tu_1-\diver (a \pa_tu_1)&=b\De\mu_1+f_1,\quad t>0,\ x'\in\R^{n-1},\ y>0,\nn\\
\mu_1-c\cdot\nabla\mu_1&=\beta \pa_tu_1-\De u_1+g_1,\quad t>0,\ x'\in\R^{n-1},\ y>0,\nn\\
\pa_y\mu_1&=0,\quad t>0,\ x'\in \R^{n-1},\ y=0,\\
\pa_y u_1&=0,\quad t>0,\ x'\in \R^{n-1},\ y=0,\nn\\
u_1(0)&=0,\quad t=0,\ x'\in\R^{n-1},\ y>0\nn,
\end{align}
with some modified data $f_1\in X^1$ and $g_1\in X^2$. In a next
step we extend the functions $f_1$ and $g_1$ w.r.t. the spatial
variable to $\R^n$ by even reflection, i.e. we set
$$f_2(t,x',y)=\begin{cases}
  f_1(t,x',y),  & \text{if}\ y\ge 0\\
  f_1(t,x',-y), & \text{if}\ y\le 0
\end{cases}\quad\text{and}\quad g_2(t,x',y)=\begin{cases}
  g_1(t,x',y),  & \text{if}\ y\ge 0\\
  g_1(t,x',-y), & \text{if}\ y\le 0
\end{cases}.$$
Thanks to Theorem \ref{thmGR} we can solve the full space problem
\begin{align}\label{HS5}
\pa_tu_2-\diver (a \pa_tu_2)&=b\De\mu_2+f_2,\quad t>0,\ x\in\R^n,\nn\\
\mu_2-c\cdot\nabla\mu_2&=\beta \pa_tu_2-\De u_2+g_2,\quad t>0,\ x\in\R^n,\\
u_2(0)&=0,\quad t=0,\ x\in\R^n\nn,
\end{align}
since $f_2\in L_{p}(J;L_p(\R^n))$ and $g_2\in L_{p}(J;H_p^1(\R^n))$.
This yields a unique solution
$$u_2\in H_{p}^1(J;H_p^1(\R^{n}))\cap
L_{p}(J;H_p^3(\R^n))\quad\text{and}\quad \mu_2\in
L_{p}(J;H_p^2(\R^n)),$$ by Theorem \ref{thmGR}. At this point we
emphasize that the equations $\eqref{HS4}_{1,2}$ are invariant
w.r.t. even reflection on the hyper surface $\R^{n-1}\times\{0\}$ in
the normal variable $y$, due to the structure of the coefficients.
This in turn implies that the solution $(u_2,\mu_2)$ is symmetric,
w.r.t the variable $y$ and this yields necessarily, $\pa_y
u_2|_{y=0}=\pa_y\mu_2|_{y=0}=0$. Denoting by $P$ the restriction of
the solution $(u_2,\mu_2)$ to the half space $\R_+^n$, it follows
that $(u_1,\mu_1)=P(u_2,\mu_2)$ is the unique solution of
\eqref{HS4} and therefore $u=v+u_1$ and $\mu=\eta+\mu_1$ is the
unique solution of \eqref{HS}. The proof is complete.

\epr

For later purposes we will need the following perturbation result.
Let $B_0=b_0I$, $$a(x)=a_0+a_1(x),\ c(x)=c_0+c_1(x),\ B(x)=B_0+B_1(x),\
D(x)=I+D_1(x)$$ with $a_1,c_1\in W_\infty^1(\R_+^n;\R^n)$, $B_1\in
W_\infty^1(\R_+^n;\R^{n\times n})$, $D_1\in
W_\infty^2(\R_+^n;\R^{n\times n})$ and
$$|a_1|_\infty+|c_1|_\infty+|B_1|_\infty+|D_1|_\infty\le\om,$$
for some $\om>0$. Let furthermore $\diver a_1(x)=\diver c_1(x)=0$
for a.e. $x\in\R_+^n$ and
$$(a_0|\nu(x))=(a_1(x)|\nu(x))=(c_0|\nu(x))=(c_1(x)|\nu(x))=0.$$
If the constant coefficients $(\beta,a_0,c_0,B_0)$ satisfy Hypothesis (H) we have the
following result.
\begin{cor}\label{corHS}
Let $1<p<\infty$, $p\neq 3/2$, $\beta>0$ and suppose that the data
satisfies the conditions (i)-(v) of Theorem \ref{HSthm} and
$(D\nabla u_0|e_n)=h_2|_{t=0}$ if $p>3/2$. Under the above assumptions on the
coefficients $(a,c,B,D)$, there exists a unique solution
$$u\in H_p^1(J;H_p^1(\R_+^n))\cap L_p(J;H_p^3(\R_+^n)),$$
$$\mu\in L_p(J;H_p^2(\R_+^n)),$$
of the system
\begin{align}\begin{split}\label{HSpert}
\pa_tu-\diver (a \pa_tu)&=\diver(B\nabla \mu)+f,\quad t>0,\ (x',y)\in\R_+^n,\\
\mu-(c|\nabla\mu)&=\beta \pa_tu-\diver(D\nabla u)+g,\quad t>0,\ (x',y)\in\R_+^n,\\
(B\nabla\mu|e_n)&=h_1,\quad t>0,\ x'\in\R^{n-1},\ y=0,\\
(D\nabla u|e_n)&=h_2,\quad t>0,\ x'\in\R^{n-1},\ y=0,\\
u(0)&=u_0,\quad t=0,\ (x',y)\in\R_+^n,
\end{split}\end{align}
provided $\om>0$ is sufficiently small.
\end{cor}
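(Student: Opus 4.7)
The plan is to mimic the perturbation argument used in the proof of Corollary \ref{corGR}, with modifications to accommodate the conormal boundary conditions and the extra perturbation $D_1$ inside the divergence term in the chemical potential equation.

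First, by a shift of $u$ one reduces to the case $u_0=g=h_1=h_2=0$ (using Theorem \ref{HSthm} to subtract off a solution with the prescribed initial and boundary data for the constant coefficient problem $(\beta,a_0,c_0,B_0,I)$). On a small interval $J_\delta=[0,\delta]$, the solution operator $\calS$ from Theorem \ref{HSthm} is bounded from the data space into ${}_0Z_\delta^1\times Z_\delta^2$. Writing \eqref{HSpert} as a fixed point equation, I would set
\[
\begin{bmatrix}u\\ \mu\end{bmatrix}=\calS\begin{bmatrix}f\\0\\0\\0\\0\end{bmatrix}+\calS\calT\begin{bmatrix}u\\ \mu\end{bmatrix},
\]
where the perturbation operator $\calT$ collects the terms
\[
\calT\begin{bmatrix}u\\ \mu\end{bmatrix}=\begin{bmatrix}\diver(a_1\pa_t u)+\diver(B_1\nabla\mu)\\ c_1\cdot\nabla\mu+\diver(D_1\nabla u)\\ -(B_1\nabla\mu|e_n)|_{y=0}\\ -(D_1\nabla u|e_n)|_{y=0}\\ 0\end{bmatrix}.
\]
The assumptions $\diver a_1=\diver c_1=0$ and $(a_1|\nu)=(c_1|\nu)=0$ are used exactly as in Corollary \ref{corGR} to avoid loss of regularity. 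The smallness of $|a_1|_\infty,|c_1|_\infty,|B_1|_\infty,|D_1|_\infty\le\omega$ together with trace and Sobolev estimates for the boundary components of $\calT$ then yields
\[
|(u,\mu)|_{Z_\delta^1\times Z_\delta^2}\le C\bigl(|f|_{X_\delta^1}+\omega|(u,\mu)|_{Z_\delta^1\times Z_\delta^2}+|\nabla\mu|_{L_p(J_\delta;L_p(\R_+^n))}\bigr).
\]

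The next step addresses the fact that $|\nabla\mu|$ in $L_p(J_\delta;L_p)$ is not automatically small, since $\mu$ has no time regularity. For this one invokes Proposition \ref{proHS2} (the half-space analogue of Proposition \ref{proGR2}) to obtain
\[
|\mu|_{L_p(J_\delta;H_p^1(\R_+^n))}\le C\bigl(|f|_{X_\delta^1}+|u|_{L_p(J_\delta;H_p^2(\R_+^n))}\bigr),
\]
with a constant independent of $\delta$. Then the mixed derivative theorem together with Sobolev embedding in time,
\[
{}_0H_p^1(J_\delta;H_p^1(\R_+^n))\cap L_p(J_\delta;H_p^3(\R_+^n))\hookrightarrow L_{2p}(J_\delta;H_p^2(\R_+^n)),
\]
and H\"older's inequality give $|u|_{L_p(J_\delta;H_p^2)}\le C\delta^{1/2p}|u|_{Z_\delta^1}$ with $C$ independent of $\delta$. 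Choosing first $\omega$, then $\delta$ sufficiently small absorbs both problematic terms on the left-hand side, producing the a priori estimate
\[
|(u,\mu)|_{Z_\delta^1\times Z_\delta^2}\le C\bigl(|f|_{X_\delta^1}+|g|_{X_\delta^2}+|h_1|_{Y^1}+|h_2|_{Y^2}+|u_0|_{X_p}\bigr)
\]
after shifting back.

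This estimate shows that the full problem operator $L_\delta$ mapping $Z_\delta^1\times Z_\delta^2$ into the data space is injective with closed range, hence semi-Fredholm. To obtain surjectivity I deform the coefficients linearly via
\[
(a_\tau,c_\tau,B_\tau,D_\tau):=(1-\tau)(a_0,c_0,B_0,I)+\tau(a,c,B,D),\quad\tau\in[0,1],
\]
noting that Hypothesis (H), the divergence-free and tangentiality conditions, and the smallness bound $\omega$ all persist uniformly along the path. Each $L_{\delta,\tau}$ is semi-Fredholm by the same argument, continuity of the Fredholm index along the path combined with the fact that $L_{\delta,0}$ is an isomorphism (Theorem \ref{HSthm}) forces $\ind L_{\delta,1}=0$, i.e.\ $L_\delta=L_{\delta,1}$ is an isomorphism. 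A successive application of this argument on consecutive subintervals yields the result on all of $J=[0,T]$.

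The main obstacle I expect is Proposition \ref{proHS2}, which provides the crucial control of $\nabla\mu$; the referenced proof sketch defers to this result, and obtaining it requires carefully testing the equations and handling the conormal boundary data. A secondary technical point is estimating the trace perturbations $(B_1\nabla\mu|e_n)|_{y=0}$ and $(D_1\nabla u|e_n)|_{y=0}$ in the anisotropic Besov norms $Y^1$ and $Y^2$; this requires the $W_\infty^2$ regularity of $D_1$ (so that traces of $D_1\nabla u$ live in the right space) together with standard multiplier estimates, and smallness of these trace norms follows from the $\omega$- and $\delta$-smallness exactly as for the interior terms.
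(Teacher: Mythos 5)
Your overall strategy --- perturbation around the constant-coefficient half-space problem, a priori estimate via a fixed-point/Neumann series representation, Proposition~\ref{proHS2} to control $\nabla\mu$, and continuity of the Fredholm index along a coefficient homotopy --- coincides with the paper's proof. There is, however, a genuine gap in your reduction step. Subtracting the constant-coefficient solution $(u^*,\mu^*)$ with data $(f,g,h_1,h_2,u_0)$ does \emph{not} reduce \eqref{HSpert} to the case $g=h_1=h_2=0$: the difference $(u-u^*,\mu-\mu^*)$ solves the \emph{perturbed} system with data $\calT(u^*,\mu^*)$, and these forcing and boundary components are nonzero (they are exactly the perturbation applied to the known reference pair $(u^*,\mu^*)$). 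Worse, $\calS(f,g,h_1,h_2,u_0)$ may not even be defined: Theorem~\ref{HSthm} requires the compatibility $\pa_y u_0=h_2|_{t=0}$, whereas the corollary only guarantees $(D\nabla u_0|e_n)=h_2|_{t=0}$, and these conditions differ whenever $D_1\ne0$.

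The paper's reduction avoids both obstructions by only removing the initial datum: extend $u_0$ to $\R^n$, solve the heat equation $\pa_t v-\De v=0$ in the full space (no boundary compatibility is needed), restrict to $\R_+^n$, and shift $u\mapsto u-v$. This gives $u(0)=0$, modified data $\tilde f,\tilde g,\tilde h_2$ (with $h_1$ unchanged), and crucially $\tilde h_2|_{t=0}=h_2|_{t=0}-(D\nabla u_0|e_n)=0$, so the compatibility needed to apply the constant-coefficient solution operator is trivially satisfied. The remaining \emph{nonzero} $\tilde g$, $h_1$, $\tilde h_2$ are then kept explicitly: Proposition~\ref{proHS2} gives $|\mu|_{L_p(J_\de;H_p^1(\R_+^n))}\le C(|f|_{X_\de^1}+|g|_{X_\de^2}+|h_1|_{Y_\de^1}+|u|_{L_p(J_\de;H_p^2(\R_+^n))})$, not the truncated estimate you quoted, and the final a priori bound carries all of $|f|,|g|,|h_1|,|h_2|,|u_0|$. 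Once you replace your reduction with this one, the remainder of your argument --- the tracking of the boundary perturbations $(B_1\nabla\mu|e_n)$, $(D_1\nabla u|e_n)$ in $\calT$, the $W_\infty^2$ regularity of $D_1$ needed for the trace estimates, and the Fredholm homotopy --- goes through as in the paper.
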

\begin{proof}
First of all, we reduce \eqref{HSpert} to the case $u_0=0$ as
follows. Extend the initial data $u_0\in B_{pp}^{3-2/p}(\R_+^n)$ to
some $\tilde{u}_0\in B_{pp}^{3-2/p}(\R^n)$ and solve the heat
equation
\begin{align*}
\pa_t v-\De v&=0,\ t>0,\ x\in\R^n,\\
v(0)&=\tilde{u}_0,\ x\in\R^n,
\end{align*}
to obtain a unique solution
$$v\in H_p^1(J;H_p^1(\R^n))\cap L_p(J;H_p^3(\R^n))=Z^1,$$
for some interval $J=[0,T]$. If $(u,\mu)\in Z^1\times Z^2$ is a
solution of \eqref{HSpert}, then the shifted function
$(u-v,\mu)\in\,_0Z^1\times Z^2$ solves \eqref{HSpert} with $u_0=0$
and some modified functions $\tilde{f}\in X^1$, $\tilde{g}\in X^2$
and $\tilde{h}_2\in\,_0Y^2$. Observe that $\tilde{f},\tilde{g}$ and
$\tilde{h}_2$ depend only on $f,g,h_2$ and the fixed function $v\in
Z^1$ from above. In the sequel we will not rename the functions
$u,f,g$ and $h_2$.

By the structure of the coefficients and by trace theory we obtain
the estimate
    \begin{multline*}|(u,\mu)|_{Z_\de^1\times Z_\de^2}\\\le
    C(|f|_{X_\de^1}+|g|_{X_\de^2}+|h_1|_{Y_\de^1}+|h_2|_{Y_\de^2}+\om|(u,\mu)|_{Z_\de^1\times
    Z_\de^2}+|u|_{L_p(J_\de;H_p^2(\R_+^n))}+|\nabla\mu|_{L_p(J_\de;L_p(\R_+^n))}),
    \end{multline*}
with a constant $C>0$ which does not depend on $\de>0$ since
$u|_{t=0}=0$. The derivation of this estimate follows the lines of
the proof of Corollary \ref{corGR}. The term
$|u|_{L_p(J_\de;H_p^2(\R_+^n))}$ is of lower order and may be
estimated by
$$|u|_{L_p(J_\de;H_p^2(\R_+^n))}\le\de^{1/2p}C|u|_{Z^1_\de}\le \de^{1/2p}|(u,\mu)|_{Z^1_\de\times Z_\de^2},$$
hence this term may be compensated by the left side of the latter
estimate if $\de>0$ is small enough. If in addition $\om>0$ is
sufficiently small, the same is true for
$\om|(u,\mu)|_{Z_\de^1\times Z_\de^2}$. To estimate the term $|\nabla\mu|$ in $L_p(J_\de;L_p(\R_+^n))$, we use the following
proposition whose proof is given in the Appendix.
\begin{pro}
\label{proHS2} Let $(u,\mu)\in Z_\de^1\times Z_\de^2$ be a solution
of \eqref{HSpert} with $u_0=0$. Then there exists a constant $C>0$,
independent of $J_\de$, such that the estimate
    \beq
    |\mu|_{L_{p}(J_\de;H_p^1(\R_+^n))}\le
    C(|f|_{X_\de^1}+|g|_{X_\de^2}+|h_1|_{Y_\de^1}+|u|_{L_{p}(J_\de;H_p^2(\R_+^n))})
    \eeq
is valid.
\end{pro}
Now the claim follows by applying a similar homotopy argument as in
the proof of Corollary \ref{corGR}.

\end{proof}

\section{Bounded domains, Localization}

Let $\Om\subset\R^n$ be a bounded domain with boundary $\pa\Om\in
C^3$. In this section we solve the system
\begin{align}\begin{split}\label{Dom}
\pa_tu-\diver (a \pa_tu)&=\diver(b\nabla \mu)+f,\quad t>0,\ x\in\Om,\\
\mu-c\cdot\nabla\mu&=\beta \pa_tu-\De u+g,\quad t>0,\ x\in\Om,\\
b\nabla\mu\cdot\nu&=h_1,\quad t>0,\ x\in\pa\Om,\\
\pa_\nu u&=h_2,\quad t>0,\ x\in\pa\Om,\\
u(0)&=u_0,\quad t=0,\ x\in\Om,
\end{split}\end{align}
with coefficients $a,c\in [C^1(\overline{\Om})]^n$ and $b\in C^1(\overline{\Om})$. We furthermore assume that
$\diver a(x)=\diver c(x)=0,\ x\in\Om$,
$(a(x)|\nu(x))=(c(x)|\nu(x))=0,\ x\in\pa\Om$ and $(\beta,a,c,b)$ satisfy (H). Before we start with
the localization procedure we prove two lemmata, which are
interesting for their own.
\begin{lem}\label{heateqreg}
Let $1<p<\infty$, $p\neq 3/2$, $J=[0,T]$ and $\Om\subset\R^n$ be a
bounded domain with $\partial\Om\in C^3$. Then for each $\beta>0$
the initial-boundary value problem
\begin{align}\label{heateqregprb}\begin{split}
\beta\pa_t u-\De u=f,&\quad t\in J,\ x\in\Om,\\
\pa_\nu u=g,&\quad t\in J,\ x\in\pa\Om,\\
u(0)=u_0,&\quad t=0,\ x\in\Om,
\end{split}\end{align}
admits a unique solution
$$u\in H_p^1(J;H_p^1(\Om))\cap L_p(J;H_p^3(\Om)),$$
if and only if the data are subject to the following conditions.
\begin{enumerate}
\item $f\in L_p(J;H_p^1(\Om))$,
\item $g\in W_p^{1-1/2p}(J;L_p(\pa\Om))\cap
L_p(J;W_p^{2-1/p}(\pa\Om))$,
\item $u_0\in B_{pp}^{3-2/p}(\Om)$,
\item $\pa_\nu u_0=g|_{t=0}$, provided $p>3/2$.
\end{enumerate}
\end{lem}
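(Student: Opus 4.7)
The plan is to prove necessity by direct substitution and trace theory, and sufficiency by reducing to zero initial and boundary data and then invoking maximal $L_p$-regularity of the Neumann Laplacian on a first-order extrapolation space.

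For necessity, substituting $u\in H_p^1(J;H_p^1(\Omega))\cap L_p(J;H_p^3(\Omega))$ into \eqref{heateqregprb}$_1$ immediately yields $f\in L_p(J;H_p^1(\Omega))$; the mixed derivative theorem provides $u\in H_p^{1/2}(J;H_p^2(\Omega))$, so that the trace $\pa_\nu u$ has precisely the anisotropic Sobolev-Slobodeckij regularity claimed for $g$; and the embedding $H_p^1(J;H_p^1(\Omega))\cap L_p(J;H_p^3(\Omega))\hookrightarrow C(J;B_{pp}^{3-2/p}(\Omega))$ forces $u_0\in B_{pp}^{3-2/p}(\Omega)$, from which the compatibility $\pa_\nu u_0=g|_{t=0}$ is inherited when $p>3/2$.

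For sufficiency I would first reduce to $u_0=0$ by extending $u_0$ to $\tilde{u}_0\in B_{pp}^{3-2/p}(\R^n)$, solving the whole-space heat equation to obtain $w_1\in H_p^1(J;H_p^1(\R^n))\cap L_p(J;H_p^3(\R^n))$, and subtracting $w_1|_\Omega$; this merely shifts $g$ within its trace space. To then reduce to $g=0$, I would localize $\pa\Omega$ via a finite atlas and a subordinate partition of unity, flatten the boundary, and in each chart solve the half-space problem $\beta\pa_tw-\Delta w=0$, $\pa_y w|_{y=0}=g_j$ by the semigroup representation associated with $L=(\beta\pa_t-\Delta_{x'})^{1/2}$, exactly as was done for \eqref{HS3a}-\eqref{HS3b} in the proof of Theorem \ref{HSthm}. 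Patching the local solutions with the partition of unity and absorbing the resulting lower-order commutators on a short time interval (as in Corollary \ref{corGR}) produces $w_2$ with the correct Neumann trace and the desired higher-order regularity, reducing the problem to $\beta\pa_tu+A_Nu=\tilde{f}$ with $u(0)=0$, $\pa_\nu u=0$ and $\tilde{f}\in L_p(J;H_p^1(\Omega))$.

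Here $A_N=-\Delta$ is the Neumann Laplacian on $L_p(\Omega)$ with domain $D(A_N)=\{v\in H_p^2(\Omega):\pa_\nu v=0\}$. Since $A_N$ admits a bounded $\calH^\infty$-calculus of angle strictly less than $\pi/2$ on $L_p(\Omega)$, combining this with the Kalton-Weis theorem (exactly as in the proof of Theorem \ref{thmGR}) yields maximal $L_p$-regularity on the first-order extrapolation space; together with Seeley's identification $D(A_N^{1/2})=H_p^1(\Omega)$ and $D(A_N^{3/2})\hookrightarrow H_p^3(\Omega)$ this delivers $u\in H_p^1(J;H_p^1(\Omega))\cap L_p(J;H_p^3(\Omega))$ after undoing the reductions. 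The main obstacle I expect is the boundary reduction: removing $g$ while preserving full $H_p^3$-regularity requires careful handling of the anisotropic trace norms and of the compatibility at $t=0$ when $p>3/2$, and the partition-of-unity gluing produces lower-order commutator terms that must be absorbed via the time-shrinking argument before continuation to the full interval $[0,T]$.
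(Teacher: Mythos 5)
Your overall plan is sound and the result is of course standard, but you take a different route from the paper, and there are two places where you are imprecise.

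The paper's sufficiency argument is a one-line reference: the full-space and half-space results of Sections 2 and 3 (specialized to the heat equation with Neumann data) give the claim for $\R^n$ and $\R^n_+$, and then the bounded-domain case follows from the standard Denk--Hieber--Pr\"uss machinery of localization, coordinate transformation and perturbation, applied in the base space $H_p^1(\Om)$. You instead propose a hybrid: first kill $u_0$ by a full-space heat solution, then kill $g$ by a localization/patching construction, and finally solve the remaining homogeneous problem $\beta\pa_t u + A_N u = \tilde f$, $u(0)=0$, $\pa_\nu u=0$ via the $\calH^\infty$-calculus of the Neumann Laplacian together with Kalton--Weis and Seeley's identification $D(A_N^{1/2})=H_p^1(\Om)$, $D(A_N^{3/2})\subset H_p^3(\Om)$. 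This is a legitimate alternative, but it is somewhat redundant: the localization-and-patching you invoke to construct $w_2$ is essentially the same amount of work as the full localization proof the paper refers to, since the partition of unity pollutes the Neumann trace by terms of the form $u\,\pa_\nu\ph_k$ which have to be absorbed by the same fixed-point/time-shrinking argument; a cleaner reduction to $g=0$ is a direct anisotropic trace-extension (independent of the PDE). Also note that what you call the ``first-order extrapolation space'' is actually the half step \emph{up} the fractional power (interpolation) scale of $A_N$, from $L_p$ to $E_{1/2}=D(A_N^{1/2})=H_p^1(\Om)$; ``extrapolation'' usually refers to moving down to $E_{-1}$, which would land you in $H_p^{-1}$, not $H_p^1$.

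For necessity, your derivation of the trace regularity of $g$ is not quite right as stated: the mixed derivative theorem gives $u\in H_p^{1/2}(J;H_p^2(\Om))$, and the boundary trace of $\pa_\nu u$ from this class lands in $H_p^{1/2}(J;W_p^{1-1/p}(\pa\Om))$, which is \emph{not} the claimed space $W_p^{1-1/2p}(J;L_p(\pa\Om))\cap L_p(J;W_p^{2-1/p}(\pa\Om))$. The correct argument (which the paper uses) is to observe $\nabla u\in H_p^1(J;L_p(\Om))\cap L_p(J;H_p^2(\Om))$ and apply the standard anisotropic (parabolic) trace theorem to this second-order maximal regularity class, which delivers exactly the $W_p^{1-1/2p}(J;L_p(\pa\Om))\cap L_p(J;W_p^{2-1/p}(\pa\Om))$ regularity and, via the time-trace embedding, the compatibility $\pa_\nu u_0=g|_{t=0}$ for $p>3/2$.
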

\bpr The 'only if' part follows from the equations and well known
result in trace theory. Indeed, given a solution
$$u\in H_p^1(J;H_p^1(\Om))\cap L_p(J;H_p^3(\Om)),$$
of \eqref{heateqregprb} it follows directly that $f\in
L_p(J;H_p^1(\Om))$. Furthermore it holds that
$$H_p^1(J;H_p^1(\Om))\cap L_p(J;H_p^3(\Om))\hookrightarrow C(J;(H_p^1(\Om);H_p^3(\Om))_{1-1/p,p})=C(J;B_{pp}^{3-2/p}(\Om)),$$
by trace- and interpolation theory. Hence $u(0)\in
B_{pp}^{3-2/p}(\Om)$. Finally observe that
$$\nabla u\in H_p^1(J;L_p(\Om))\cap L_p(J;H_p^2(\Om)).$$
Taking the trace of $\nabla u$ on $\pa\Om$ yields
$$\nabla u|_{\pa\Om}\in W_p^{1-1/2p}(J;L_p(\pa\Om))\cap
L_p(J;W_p^{2-1/p}(\pa\Om)),$$ the required regularity for $g$.
Finally, since
$$W_p^{1-1/2p}(J;L_p(\pa\Om))\cap
L_p(J;W_p^{2-1/p}(\pa\Om))\hookrightarrow
C(J;B_{pp}^{2-3/p}(\pa\Om)),$$ it follows that $\pa_\nu
u(0)=g|_{t=0}$ in case $p>3/2$. To prove sufficiency of the
conditions (i)-(iv), note that by the results of Sections
\ref{CHGsec1} \& \ref{CHGsec2} the unique solution of the
corresponding full space and half space problem to
\eqref{heateqregprb} possess the desired regularity. Then the claim
for a bounded domain $\Om\subset\R^n$ with $\pa\Om\in C^3$ follows
from localization, change of coordinates and perturbation theory,
cf. \cite{DHP1}.

\epr

The second lemma provides maximal regularity of \eqref{Dom} in case
$a=c=0$ and $b=1$, the so-called \emph{viscous Cahn-Hilliard
equation} in its linear form.
\begin{lem}\label{CHGspecial}
Let $1<p<\infty$, $p\neq 3/2$, $J=[0,T]$ and $\Om\subset\R^n$ be a
bounded domain with $\partial\Om\in C^3$. Then for each $\beta>0$
the system
\begin{align}\label{CHGspecial2}\begin{split}
\pa_tu-\De\mu=f,&\quad t\in J,\ x\in\Om,\\
\mu-\beta\pa_t u+\De u=g,&\quad t\in J,\ x\in\Om,\\
\pa_\nu \mu=h_1,&\quad t\in J,\ x\in\pa\Om,\\
\pa_\nu u=h_2,&\quad t\in J,\ x\in\pa\Om,\\
u(0)=u_0,&\quad t=0,\ x\in\Om,
\end{split}\end{align}
admits a unique solution
$$u\in H_p^1(J;H_p^1(\Om))\cap L_p(J;H_p^3(\Om)),$$
$$\mu\in L_p(J;H_p^2(\Om)),$$
if and only if the data are subject to the following conditions.
\begin{enumerate}
\item $f\in L_p(J;L_p(\Om))$,
\item $g\in L_p(J;H_p^1(\Om))$,
\item $h_1\in L_p(J;W_p^{1-1/p}(\pa\Om))$,
\item $h_2\in W_p^{1-1/2p}(J;L_p(\pa\Om))\cap
L_p(J;W_p^{2-1/p}(\pa\Om))$,
\item $u_0\in B_{pp}^{3-2/p}(\Om)$,
\item $\pa_\nu u_0=h_2|_{t=0}$, provided $p>3/2$.
\end{enumerate}
\end{lem}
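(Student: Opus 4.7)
My plan is to mirror the argument given for Lemma \ref{heateqreg}, with Theorems \ref{thmGR} and \ref{HSthm} (and their perturbation counterparts, Corollaries \ref{corGR} and \ref{corHS}) playing the role of the heat equation results used there. Necessity of the conditions is routine: given $(u,\mu)\in Z^1\times Z^2$ solving \eqref{CHGspecial2}, the two equations immediately yield $f\in L_p(J;L_p(\Om))$ and $g\in L_p(J;H_p^1(\Om))$; trace theory applied to $\nabla\mu\in L_p(J;H_p^1(\Om))$ gives $h_1$ in the asserted class, and applied to $\nabla u\in H_p^1(J;L_p(\Om))\cap L_p(J;H_p^2(\Om))$ gives $h_2$; finally, the embeddings $Z^1\hookrightarrow C(J;B_{pp}^{3-2/p}(\Om))$ and $Y^2\hookrightarrow C(J;B_{pp}^{2-3/p}(\pa\Om))$ furnish $u_0\in B_{pp}^{3-2/p}(\Om)$ together with the compatibility $\pa_\nu u_0=h_2|_{t=0}$ when $p>3/2$.

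For sufficiency I localize. I fix a finite cover $\{U_k\}$ of $\overline{\Om}$ consisting of interior sets $U_k\Subset\Om$ and boundary patches on which $\pa\Om$ is flattened by a $C^3$-diffeomorphism, together with a subordinate partition of unity $\{\ph_k\}\subset C^3(\overline{\Om})$. On an interior patch the pair $(\ph_k u,\ph_k\mu)$, extended by zero to $\R^n$, solves a full space system of the form \eqref{GR} with constants $a=c=0$, $B=I$, $\beta>0$, plus commutators with $\ph_k$ on the right-hand side that are of strictly lower order (at most second in $u$, first in $\mu$); Theorem \ref{thmGR} applies. On a boundary patch the pull-back of $-\De$ becomes $-\diver(D_k(x)\nabla\,\cdot\,)$ with $D_k(0)=I$ and $D_k\in W_\infty^2$, so by shrinking $U_k$ the quantity $|D_k-I|_{W_\infty^2}$ becomes arbitrarily small and the local problem fits into the setting of Corollary \ref{corHS} (with $a=c=0$, $B=I$, $D=I+D_1$).

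Summing the local solutions via $\sum_k\ph_k=1$ gives a solution on a short interval $J_\de=[0,\de]$, with the commutator and small-perturbation terms absorbed on the left-hand side exactly as in \eqref{GR3}: the relevant ingredients are the bound $|u|_{L_p(J_\de;H_p^2(\Om))}\le\de^{1/2p}C|u|_{Z_\de^1}$ coming from the mixed derivative theorem on $_0Z_\de^1$, together with an $L_p(J_\de;H_p^1(\Om))$-estimate for $\mu$ in terms of $|u|_{L_p(J_\de;H_p^2(\Om))}$ obtained from a localized analogue of Proposition \ref{proHS2}. This last control is the main technical obstacle, since $\mu$ carries no time regularity; once it is in hand, the semi-Fredholm/homotopy argument from the end of the proof of Corollary \ref{corGR}, applied with the homotopy $\t\mapsto I+\t D_1$ on the metric coefficient $D$, extends the short-time solution to the full interval $J=[0,T]$ and yields uniqueness.
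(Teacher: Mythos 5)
Your proposal takes a genuinely different route from the paper, and unfortunately it has a gap precisely at the step where the paper's choice of method pays off.

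The paper proves Lemma \ref{CHGspecial} \emph{without} localization. After using Lemma \ref{heateqreg} to remove $g$, $h_2$ and $u_0$, it eliminates $\mu$: substituting $\eqref{CHGspecial2}_1$ into $\eqref{CHGspecial2}_2$ gives the elliptic Neumann problem $\mu-\beta\De\mu=\beta f-\De u$, $\pa_\nu\mu=h_1$, whose bounded solution operator $\calS$ is then fed back into $\eqref{CHGspecial2}_2$ to produce a single heat equation for $u$ with a lower-order, time-independent linear perturbation $\tilde{\calS}u=\calS(\De u,0)$. A Neumann series (using $|u|_{L_p(J;H_p^2)}\le CT^{1/2p}|u|_{Z^1}$) then yields existence and uniqueness on a short interval, and linearity plus time-shift invariance extends to $[0,T]$. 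This route is deliberately chosen: Lemma \ref{CHGspecial} serves later as the bijective endpoint $L_0$ of the Fredholm homotopy in the proof of Theorem \ref{lin}, so it must be established by a method that does not itself rely on the Fredholm argument.

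Your localization approach, by contrast, can only deliver the a priori estimate --- injectivity and closed range, i.e.\ the semi-Fredholm property. The representation $\sum_k\ph_k=1$ expresses a \emph{given} solution $(u,\mu)$ in terms of the local solution operators applied to data that still contain $(u,\mu)$ through the commutator terms $F_k,G_k$; it is not a construction of the solution. To conclude surjectivity you need a homotopy to an operator already known to be bijective, and here is the gap: the homotopy $\t\mapsto I+\t D_1$ acts only on the chart-wise pullback metric and at $\t=0$ returns the very same global boundary value problem on $\Om$ (globally $D\equiv I$ already). No independently-known bijective reference operator is produced, so the Fredholm argument does not close. You would need either to import a separate existence theory (e.g.\ the classical $L_2$/Galerkin solvability of the viscous Cahn--Hilliard system followed by $L_p$ regularity bootstrap), or --- more simply --- use exactly the elimination argument the paper uses, which avoids localization altogether and is substantially shorter. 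A minor additional point: the homotopy controls surjectivity on the short interval $J_\de$, not the passage from $J_\de$ to $[0,T]$; that last step is a separate successive-continuation argument and should be stated as such.
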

\bpr By Lemma \ref{heateqreg} there exists a unique solution
$$v\in H_p^1(J;H_p^1(\Om))\cap L_p(J;H_p^3(\Om)).$$ of the problem
\begin{align*}
\beta\pa_t v-\De v=-g,&\quad t\in J,\ x\in\Om,\\
\pa_\nu v=h_2,&\quad t\in J,\ x\in\pa\Om,\\
v(0)=u_0,&\quad t=0,\ x\in\Om.
\end{align*}
Hence, w.l.o.g. we may assume $g=h_2=u_0=0$ in \eqref{heateqregprb},
with $f$ being replaced by some modified function $\tilde{f}\in
L_p(J;L_p(\Om))$, which depends at most on the fixed functions $f$
and $v$.

Now we want to reduce \eqref{CHGspecial2} to a single equation for
$u$. Suppose that we already know a solution of \eqref{CHGspecial2}.
Inserting $\eqref{CHGspecial2}_1$ into $\eqref{CHGspecial2}_2$
yields the elliptic problem
\begin{align*} \mu-\beta\De\mu&=\beta f-\De u,\quad t\in J,\
x\in\Om,\\
\pa_\nu\mu&=h_1,\quad t\in J,\ x\in\partial\Om,
\end{align*}
for the function $\mu$. It is well-known that for each $\beta>0$ the
latter problem admits a unique solution $\mu\in L_p(J;H_p^2(\Om))$,
provided $(\beta f-\De u)\in L_p(J;L_p(\Om))$ and $h_1\in
L_p(J;W_p^{1-1/p}(\pa\Om))$. Denoting by $\calS$ the corresponding
solution operator, we may write
$$\mu=\calS\begin{bmatrix}\beta f\\h_1\end{bmatrix}-\calS\begin{bmatrix}\De
u\\0\end{bmatrix}.$$ Inserting this expression into
$\eqref{CHGspecial2}_2$ we obtain the problem
\begin{align}\begin{split}\label{CHGspecial3}
\beta\pa_t u-\De u&=h-\tilde{\calS}u,\quad
t\in J,\ x\in\Om,\\
\pa_\nu u&=0,\quad t\in J,\ x\in\pa\Om,\\
u(0)&=0,\quad t=0,\ x\in\Om,\end{split}\end{align} where
$h:=\calS(\beta f,h_1)$ and $\tilde{\calS}u:=\calS(\De u,0)$. Since
$\calS$ is a bounded linear operator from $L_p(J;L_p(\Om))\times
L_p(J;W_p^{1-1/p}(\pa\Om))$ to $L_p(J;H_p^2(\Om))$ it follows that
$\tilde{\calS}$ is bounded and linear from $L_p(J;H_p^2(\Om))$ to
$L_p(J;H_p^2(\Om))$. Thanks to Lemma \ref{heateqreg} there exists a
solution operator $\calT$ of \eqref{heateqregprb} which is a linear
and bounded mapping from
$$L_p(J;H_p^1(\Om))\times\hspace{0.05cm}_0W_p^{1-1/2p}(J;L_p(\pa\Om))\cap
L_p(J;W_p^{2-1/p}(\pa\Om))\times B_{pp}^{3-2/p}(\Om)$$ to
$_0H_p^1(J;H_p^1(\Om))\cap L_p(J;H_p^3(\Om))$. With the help of
$\calT$ we may write
$$u=\calT\begin{bmatrix}h\\0\\0\end{bmatrix}-\calT\begin{bmatrix}\tilde{\calS}u\\0\\0\end{bmatrix}.$$
We estimate
\begin{multline*}
|\calT(\tilde{\calS}u,0,0)|_{Z}\le
C|\tilde{\calS}u|_{L_p(J;H_p^1(\Om))}\le C|u|_{L_p(J;H_p^2(\Om))}\\
\le
CT^{1/2p}|u|_{L_{2p}(J;H_p^2(\Om))}\le CT^{1/2p}|u|_Z,
\end{multline*}
by H\"older's inequality. Here the constant $C>0$ does not depend on
$T>0$, since the time traces at $t=0$ are zero. A Neumann series
argument yields a unique solution $u\in Z$ of \eqref{CHGspecial3} on
a (possibly) small time interval $J=[0,T]$. Since
\eqref{CHGspecial3} is linear and invariant with respect to time
shifts, the solution exists global in time.

\epr

The main result of this section reads as follows.
\begin{thm}\label{lin}
Let $1<p<\infty$, $p\neq 3/2$, $J=[0,T]$. Suppose furthermore that
$a,c\in [C^1(\bar{\Om})]^{n}$, $b\in C^1(\bar{\Om})$. Then
\eqref{Dom} admits a unique solution
$$u\in H_p^1(J;H_p^1(\Om))\cap L_p(J;H_p^3(\Om))=Z^1,\quad \mu\in
L_p(J;H_p^2(\Om))=Z^2,$$ if and only if the data are subject to the
following conditions.
    \begin{enumerate}
    \item $f\in L_p(J;L_p(\Om))=X^1$,
    \item $g\in L_p(J;H_p^1(\Om))=X^2$,
    \item $h_1\in L_p(J;W_p^{1-1/p}(\Ga))=Y^1$,
    \item $h_2\in W_p^{1-1/2p}(J;L_p(\Ga))\cap
    L_p(J;W_p^{2-1/p}(\Ga))=Y^2$,
    \item $u_0\in B_{pp}^{3-2/p}(\Om)=X_p$,
    \item $\pa_\nu u_0=h_2|_{t=0}$ if $p>3/2$.
    \end{enumerate}
\end{thm}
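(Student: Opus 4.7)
\textbf{Necessity} follows exactly as in Theorems \ref{thmGR} and \ref{HSthm}: inserting a solution into the equations yields $f \in X^1$ and $g \in X^2$, while trace and interpolation theory for the anisotropic space $Z^1$ give the regularity of $h_1 = b\nabla\mu\cdot\nu$, $h_2 = \pa_\nu u$, the initial value $u_0 \in B_{pp}^{3-2/p}(\Om)$, and the compatibility $\pa_\nu u_0 = h_2|_{t=0}$ when $p > 3/2$.

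\textbf{Sufficiency} will be proved by the standard localization scheme, combining Corollaries \ref{corGR} and \ref{corHS} with Lemma \ref{heateqreg}. First I would apply Lemma \ref{heateqreg} to produce $v \in Z^1$ solving $\beta \pa_t v - \De v = 0$, $\pa_\nu v = h_2$, $v(0) = u_0$; subtracting $v$ from $u$ reduces \eqref{Dom} to the case $u_0 = 0$, $h_2 = 0$, with $f$ replaced by $f - \pa_t v + \diver(a \pa_t v) \in X^1$ and $g, h_1$ unchanged. Condition (vi) is precisely what allows Lemma \ref{heateqreg} to apply when $p > 3/2$. Next, I would cover $\overline{\Om}$ by finitely many open sets $U_j$ of diameter at most $r$, each either compactly contained in $\Om$ or mapped by a $C^3$-diffeomorphism $\Psi_j$ onto a neighborhood of $0 \in \overline{\R_+^n}$ that straightens $U_j \cap \pa\Om$, and pick a subordinate smooth partition of unity $\{\varphi_j\}$. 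By uniform continuity of $a, c, b$, the parameter $r$ can be chosen so small that on each $U_j$ the fluctuations of the coefficients around reference values $a(x_j), c(x_j), b(x_j)$ are bounded in $L^\infty$ by the smallness threshold $\om$ from Corollaries \ref{corGR} and \ref{corHS}, while the frozen quadruple $(\beta, a(x_j), c(x_j), b(x_j)I)$ still satisfies (H).

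On an interior chart, extending the small perturbations smoothly and boundedly to all of $\R^n$, Corollary \ref{corGR} furnishes a full-space solution operator. On a boundary chart, pulling back by $\Psi_j$ transforms $\De u$ into $\diver(D \nabla u)$ with $D = I + D_1$, $|D_1|_\infty = O(r)$; the hypotheses $(a|\nu) = (c|\nu) = 0$ on $\pa\Om$ translate into vanishing of the $n$-th components of the transformed $a, c$ on $\{y = 0\}$, and $\diver a = \diver c = 0$ is preserved up to further $O(r)$ lower-order corrections. Thus Corollary \ref{corHS} applies in each boundary chart. Gluing the local solution operators via the partition of unity leads to a fixed-point identity $(u, \mu) = \mathcal{L}(f, g, h_1) + \mathcal{K}(u, \mu)$ in $\,_0Z_\delta^1 \times Z_\delta^2$, where $\mathcal{K}$ collects the commutators $[\varphi_j, \cdot]$ with the differential operators, the $O(\om)$ coefficient perturbations, and the $O(r)$ lower-order corrections from the change of variables. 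All these contributions involve strictly fewer derivatives than the principal part, so combining the mixed-derivative embedding $\,_0Z_\delta^1 \hookrightarrow L_{2p}(J_\delta; H_p^2(\Om))$ with an analogue of Proposition \ref{proHS2} bounding $|\mu|_{L_p(J_\delta; H_p^1(\Om))}$ makes $\mathcal{K}$ a strict contraction on a short interval $J_\delta$, exactly as in the proofs of Corollaries \ref{corGR} and \ref{corHS}. A Neumann series then gives a unique local solution, and the semi-Fredholm continuation argument from the end of the proof of Corollary \ref{corGR} extends it to all of $J = [0, T]$.

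The main technical obstacle I anticipate lies in verifying in detail that each boundary chart really falls within the scope of Corollary \ref{corHS}: one must check that the $C^1$-regularity of $a, c, b$ together with the structural constraints (tangentiality of $a, c$ to $\pa\Om$ and the divergence-free condition up to $O(r)$) are preserved in a uniform way under all the $C^3$-pullbacks $\Psi_j^*$, and that the resulting variable parts $a_1, c_1, B_1, D_1$ in each chart can be extended to $\R_+^n$ so as to simultaneously fulfill the smallness hypothesis of Corollary \ref{corHS}.
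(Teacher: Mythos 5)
Your overall architecture (reduce the boundary/initial data, localize into interior and boundary charts, freeze coefficients, invoke Corollaries \ref{corGR} and \ref{corHS}, then patch) matches the paper. The reduction step differs slightly: you use Lemma \ref{heateqreg} to kill $u_0$ and $h_2$ and carry $h_1$ through the charts, whereas the paper applies Lemma \ref{CHGspecial} (the linear viscous Cahn--Hilliard system) to eliminate $h_1$, $h_2$, and $u_0$ all at once. Both reductions are workable, but Lemma \ref{CHGspecial} plays a second, crucial role in the paper that you would still need: it furnishes an \emph{a priori solvable} endpoint $L_0$ for the homotopy of coefficients.

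The genuine gap is at the existence step. You claim that after gluing the local solution operators one obtains $(u,\mu)=\mathcal L(f,g,h_1)+\mathcal K(u,\mu)$ and that ``combining the mixed-derivative embedding $\ldots$ with an analogue of Proposition \ref{proHS2} bounding $|\mu|_{L_p(J_\delta;H_p^1(\Om))}$ makes $\mathcal K$ a strict contraction on a short interval.'' This does not go through. Proposition \ref{proHS2} (and its analogue Proposition \ref{locprop3}) is an \emph{a priori estimate for solutions of the PDE}: the bound on $|\mu|_{L_p(J_\delta;H_p^1)}$ in terms of $|u|_{L_p(J_\delta;H_p^2)}$ and the data is obtained by substituting the evolution equation for $u$ into the constitutive equation for $\mu$, producing an elliptic problem for $\mu$. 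For an arbitrary pair $(u,\mu)\in{}_0Z^1_\delta\times Z^2_\delta$ that does not satisfy the system, this elliptic structure is absent, so there is no mechanism to make $|\nabla\mu|_{L_p(J_\delta;L_p)}$ small for small $\delta$ (as the paper explicitly remarks in the proof of Corollary \ref{corGR}, $\mu$ has no time regularity to exploit). Hence $\mathcal K$ need not be a contraction and a Neumann series does not yield existence.

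What the paper actually does (and what you must do) is to use \eqref{LocSolAll2} together with Proposition \ref{locprop3} purely as an a priori estimate for solutions of \eqref{Dom}. This shows that the associated operator $L:Z^1_\delta\times Z^2_\delta\to X^1_\delta\times X^2_\delta\times\calY_\delta$ is injective with closed range, i.e.\ semi-Fredholm. Surjectivity is then obtained by the homotopy $(\beta,a_\tau,c_\tau,B_\tau)=(1-\tau)(\beta,0,0,I)+\tau(\beta,a,c,B)$: each $L_\tau$ is semi-Fredholm by the same estimate, $L_0$ is bijective by Lemma \ref{CHGspecial}, and the continuity of the Fredholm index forces $\operatorname{ind}L_1=0$, hence $L_1=L$ is an isomorphism. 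You mention ``the semi-Fredholm continuation argument,'' but only as a device to pass from a small interval to $[0,T]$; in fact it is the engine that produces existence on the small interval in the first place. The time-continuation is the separate, easier step of iterating over subintervals of $[0,T]$. Without the Fredholm index argument your proof has no existence mechanism.
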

\bpr By Lemma \ref{CHGspecial} we may first reduce \eqref{Dom} to
the case $h_1=h_2=u_0=0$ and some modified functions $f,g$ in the
right regularity classes. We cover $\overline{\Om}$ by finitely many
open sets $U_k$, $k=1,...,N$, which are subject to the following
conditions. \be
\item $U_k\cap\Ga=\emptyset$ and $U_k=B_{r_k}(x_k)$ for all
$k=1,...,N_1$;
\item $U_k\cap\Ga\neq\emptyset$ for $k=N_1+1,...,N.$
\ee We choose next a partition of unity $\{\ph_k\}_{k=1}^N$ such
that $\sum_{k=1}^N\ph_k(x)=1$ on $\overline{\Om}$, $0\le\ph_k(x)\le
1$ and $\text{supp}\ \ph_k\subset U_j$. Note that $(u,\mu)$ is a
solution of \eqref{Dom} if and only if
    \begin{align}\label{Dom2}
    \pa_tu_k-\diver(a\pa_tu_k)&=\diver(b\nabla \mu_k)+f_k+F_k(u,\mu),\quad t\in[0,\de],\ x\in\Om\cap U_k,\ 1\le k\le N,\nn\\
    \mu_k-c\cdot\nabla\mu_k&=\beta \pa_tu_k-\De u_k+g_k+G_k(u,\mu),\quad t\in[0,\de],\ x\in\Om\cap U_k,\ 1\le k\le N\nn\\
    b\nabla\mu_k\cdot\nu&=(b\nabla\ph_k\cdot\nu)\mu,\quad t\in[0,\de],\ x\in\Ga\cap U_k,\ N_1+1\le k\le N\\
    \pa_\nu u_k&=u\pa_\nu\ph_k,\quad t\in[0,\de],\ x\in \Ga\cap U_k,\ N_1+1\le k\le N\nn\\
    u_k(0)&=0,\quad t=0,\ x\in\Om\cap U_k\nn.
    \end{align}
Here we have set $u_k=u\ph_k$, $\mu_k=\mu\ph_k$, $f_k=f\ph_k$,
$g_k=f\ph_k$. The terms $F_k(u,\mu)$ and $G_k(u,\mu)$ are defined by
    $$F_k(u,\mu)=-(a\cdot\nabla\ph_k)\pa_t
u-(\nabla b\cdot\nabla\ph_k)\mu-2b\nabla\ph_k\cdot\nabla\mu-b\mu\De\ph_k,$$
and
    $$G_k(u,\mu)=-(c\cdot \nabla\ph_k)\mu+2\nabla u\nabla\ph_k+u\De\ph_k.$$
In case $k=1,...,N_1$ we have no boundary conditions, i.e. we only
have to consider the first two equations in \eqref{Dom2}. In order
to treat these local problems with the help of Corollary \ref{corGR}
we extend the coefficients from $B_{r_k}(x_k)$ to $\R^n$ in such a
way that $\diver \tilde{a}(x)=\diver \tilde{c}(x)=0,\ x\in\R^n$,
holds for the extended coefficients $\tilde{a}$ and $\tilde{c}$.
Note that w.l.o.g. we may assume $x_k=0$. This follows by a
translation in $\R^n$.

We use the following extension $\tilde{a}$ of $a$
(or $\tilde{c}$ of $c$).
    \beq\label{ext0}
    \tilde{a}^k(x)=
        \begin{cases} a(x),\quad &x\in\overline{B_{r_k}(0)},\\
        a\left(\frac{r_k^2 x}{r^2}\right)-2\left(\xi\Big|a\left(\frac{r_k^2
        x}{r^2}\right)\right)\xi+R(r,\xi)\xi,\quad &x\in \R^n\setminus
        \overline{B_{r_{k}}(0)},
        \end{cases}
    \eeq
where $r=|x|$, $\xi=x/|x|$ and $\xi_j$, $a_j$ denote the components
of $\xi$ and $a$, respectively. The task is to compute the scalar
valued function $R(r,\xi)$. Since $\diver a(x)=0,\ x\in\Om$, the
divergence of $a\left(\frac{r_k^2}{r^2}x\right)$ and
$\left(\xi\Big|a\left(\frac{r_k^2}{r^2}x\right)\right)\xi$ may be
computed to the result
    $$\diver \left[a\left(\frac{r_k^2}{r^2}x\right)\right]=-2\frac{r_k^2}{r^2}\sum_{i,j}\xi_i\xi_j\pa_j a_i\left(\frac{r_k^2
x}{r^2}\right)$$ and
    $$\diver\left[
\left(\xi\Big|a\left(\frac{r_k^2}{r^2}x\right)\right)\xi\right]=\frac{(n-1)}{r}\left(\xi\Big|a\left(\frac{r_k^2
x}{r^2}\right)\right)-\frac{r_k^2}{r^2}\sum_{i,j=1}^n\xi_i\xi_j\pa_ia_j\left(\frac{r_k^2
x}{r^2}\right).$$ The divergence of the last term $R(r,\xi)\xi$ is
given by
    $$\diver \left[R(r,\xi)\xi\right]=\pa_r
R(r,\xi)+\frac{n-1}{r}R(r,\xi).$$ Finally, this yields that $\diver
\tilde{a}^k(x)=0$ if and only if the function $R=R(r,\xi)$ solves
the ordinary differential equation
    $$\pa_r R(r,\xi)+\frac{(n-1)}{r} R(r,\xi)=2\frac{(n-1)}{r}\left(\xi\Big|a\left(\frac{r_k^2}{r}\xi\right)\right),\quad r\ge r_k.$$
In order to achieve $\tilde{a}^k,\tilde{c}^k\in W_\infty
^1(\R^n;\R^n)$, we require $\tilde{a}^k(r_k\xi)=a(r_k\xi)$. This
yields the initial condition $R(r_k,\xi)=2(a(r_k\xi)|\xi)$, hence
the function $R=R(r,\xi)$ is explicitly given by
    $$R(r,\xi)=\frac{r_k^{n-1}}{r^{n-1}}R(r_k,\xi)+\frac{2(n-1)}{r^{n-1}}\int_{r_k}^r
s^{n-2}\left(a\left(\frac{r_k^2}{s}\xi\right)\Big|\xi\right)\
ds,\quad r\ge r_k.$$ Since
    $$2\frac{r_k^{n-1}}{r^{n-1}}(a(r_k\xi)|\xi)=2(a(r_k\xi)|\xi)-2\frac{(n-1)}{r^{n-1}}\int_{r_k}^rs^{n-2}(a(r_k\xi)|\xi)\
ds,$$ we may write
    \begin{multline*}\tilde{a}^k(x)=a\left(\frac{r_k^2 x}{r^2}\right)-2\left(\xi\Big|a\left(\frac{r_k^2
    x}{r^2}\right)-a(r_k\xi)\right)\xi\\
    +\frac{2(n-1)}{r^{n-1}}\int_{r_k}^r
    s^{n-2}\left(a\left(\frac{r_k^2}{s}\xi\right)-a(r_k\xi)\Big|\xi\right)\xi\ ds,
    \end{multline*}
in case $|x|>r_k$. Owing to this identity and the assumption
$a,c\in C^1(\overline{\Om})$, it is evident that there holds
    $$|\tilde{a}^k(x)-a(0)|+|\tilde{c}^k(x)-c(0)|\le\om,$$
for all $x\in\R^n$, where $\om>0$ can be made as small as we wish,
by decreasing the radius $r_k$ of the charts $U_k$,
$k\in\{1,...,N_1\}$.

For the coefficient function $b$ we use the reflection method from
\cite{DHP1}, i.e. we set
    \beq\label{ext1}
    \tilde{b}^k(x)=
        \begin{cases} b(x),\quad &x\in\overline{B_{r_k}(0)},\\
        b\left(r_k^2\frac{x}{|x|^2}\right),\quad &x\in \R^n\setminus
        \overline{B_{r_{k}}(0)}.
        \end{cases}
    \eeq
It may be readily checked that $\tilde{b}^k\in
W_\infty^1(\R^n)$ and that
    $$|b(0)-\tilde{b}^k(x)|\le \om,\quad x\in\R^n,$$ with the same $\om>0$ as above. Hence for each chart $U_k,\
k\in\{1,...,N_1\}$ we have coefficients which fit into the setting
of Corollary \ref{corGR}. Therefore we obtain corresponding solution
operators $S_k^F$ of \eqref{Dom2} such that
    \beq\label{LocSolFS}\begin{bmatrix}u_k\\\mu_k\end{bmatrix}=S_k^F
    \begin{bmatrix}
    f_k+F_k(u,\mu)\\
    g_k+G_k(u,\mu)
    \end{bmatrix},
    \eeq
for each $k\in\{1,\ldots,N_1\}$.

For the remaining charts $U_k$, $k\in\{N_1+1,\ldots,N\}$ we obtain
problems in perturbed half spaces with inhomogeneous Neumann
boundary conditions. For the further analysis we have to understand
how to treat \eqref{Dom} in such a setting. To this end we fix a
point $x_0\in\pa\Om$ and a chart $U(x_0)\cap\pa\Om\neq\emptyset$.
After a composition of a translation and a rotation in $\R^n$, we
may assume that $x_0=0$ and $\nu(x_0)=[0,\ldots,0,-1]=e_n$. Consider
a graph $\rho\in C^3(\R^{n-1})$, having compact support, such that
    $$\{(x',x_n)\in\overline{U(x_0)}\subset\R^n:x_n=\rho(x')\}=\pa\Om\cap
\overline{U(x_0)}.$$ Note that by decreasing the size of the charts
we may assume that $|\nabla_{x'}\rho|_\infty$ is as small as we
like, since $\nabla_{x'}\rho(0)=0$.

For the time being, we only know that $\diver a(x)=\diver c(x)=0$
for all $x\in U(x_0)\cap \Om$. So we have to extend the coefficients
$a$ and $c$ in a suitable way. To this end we first transform the
crooked boundary $U(x_0)\cap\pa\Om$ to a straight line in
$\R^{n-1}\times\{0\}$. This will be done with the help of a suitable
transformation. Let $u(x',x_n)=v(g(x))=v(x',x_n-\rho(x'))$ and
$\mu(x)=\eta(g(x))=\eta(x',x_n-\rho(x'))$ and
$B_{r_0}(x_0)=g(U(x_0))$. Then the differential operators
$a\cdot\nabla u$ and $c\cdot\nabla \mu$ transform as follows.
    $$a(x)\cdot\nabla u(x)=a(x)\cdot (Dg(x)^\textsf{T}\nabla
v(g(x)))=(Dg(x) a(x))\cdot\nabla v(g(x))=\bar{a}(g(x))\cdot\nabla
v(g(x)),$$ and
    $$c(x)\cdot\nabla \mu(x)=c(x)\cdot (Dg^\textsf{T}(x)\nabla
\eta(g(x)))=(Dg(x) c(x))\cdot\nabla
\eta(g(x))=\bar{c}(g(x))\cdot\nabla \eta(g(x)),$$ with
$\bar{a}(x):=Dg(x) a(g^{-1}(x))$ and $\bar{c}(x)=Dg(x)
c(g^{-1}(x))$. The transformed Laplace operator reads
    $$\De u=\diver(DgDg^{\textsf{T}}\nabla v).$$
Similarly we obtain
    $$\diver(b\nabla\mu)=\diver(\bar{B}\nabla\eta),$$
where $\bar{B}(x):=b(g^{-1}(x))Dg(x)Dg^\textsf{T}(x)$, $x\in
g(U(x_0))\cap\overline{\R_+^n}$. Here the matrix $Dg$ is given by
    $$Dg(x)=\begin{bmatrix}I_{n-1} & 0\\-\nabla_{x'}\rho(x')^{\textsf{T}} &
1\end{bmatrix},\ x'\in \R^{n-1},$$ where
$I_{n-1}$ is the identity matrix in $\R^{(n-1)\times(n-1)}$. Observe
that the normal $\nu$ at $U(x_0)\cap\pa \Om$ is given by
    $$\nu(x',\rho(x'))=\frac{1}{\sqrt{1+|\nabla_{x'}\rho|^2}}\begin{bmatrix}\nabla_{x'}\rho\\-1\end{bmatrix}.$$
Therefore it holds that
$\sqrt{1+|\nabla_{x'}\rho(x')|^2}(Dg^\textsf{T})^{-1}\nu=[0,\ldots,0,-1]^\textsf{T}=e_n$,
hence the transformed boundary conditions are
$\bar{B}\nabla\eta\cdot
e_n=\sqrt{1+|\nabla\rho(x')|^2}\Theta^{-1}h_1$ and
    $$DgDg^{\textsf{T}}\nabla v\cdot e_n=\sqrt{1+|\nabla\rho(x')|^2}\Theta^{-1}h_2.$$
Here $\Theta^{-1}$ is defined by $(\Theta^{-1}u)(x):=u(g^{-1}(x)),\
x\in\R_+^n$.

By construction, the transformed coefficients satisfy $\diver
\bar{a}(x)=\diver \bar{c}(x)=0$ for all $x\in
B_{r_0}(x_0)\cap\R_+^n$ and $(\bar{a}(x)|e_n)=(\bar{c}(x)|e_n)=0$
for all $x\in B_{r_0}(x_0)\cap\pa\R_+^{n}$. Now we are in a position
to use the extension \eqref{ext0} in order to extend $\bar{a}$ and
$\bar{c}$ to the whole of $\overline{\R^n_+}$, such that the
divergence condition $\diver\tilde{a}(x)=\diver\tilde{c}(x)=0$ is
preserved for $x\in\R^n_+$. It is furthermore clear by the structure
of \eqref{ext0} that $(\tilde{a}(x)|e_n)=(\tilde{c}(x)|e_n)=0$ holds
for all $x\in\pa\R_+^n=\R^{n-1}\times\{0\}$. The coefficient matrix
$\bar{B}$ can be extended to a matrix $\tilde{B}$ on
$\overline{\R_+^n}$ by the reflection method \eqref{ext1}. In
particular it holds that $\tilde{B}(x_0)=\bar{B}(x_0)=B(x_0)=b(x_0)I$, by construction.

Therefore we have to solve the following perturbed problem in the half space
$\R_+^n$.
    \begin{align}\label{CrooHStransf}
    \pa_tv-\diver (\tilde{a} \pa_tv)&=\diver(\tilde{B}\nabla \eta)+\Theta^{-1}f,\quad t\in[0,\de],\ x\in \R^{n}_+,\nn\\
    \eta-\tilde{c}\cdot\nabla\eta&=\beta \pa_tv-\diver(D\nabla v)+\Theta^{-1}g,\quad t\in[0,\de],\ x\in\R^{n}_+,\nn\\
    (\tilde{B}\nabla\eta|e_n)&=\sqrt{1+|\nabla_{x'}\rho(x')|^2}\Theta^{-1}h_1,\quad t\in[0,\de],\ x'\in\R^{n-1},\ y=0,\\
    (D\nabla v|e_n)&=\sqrt{1+|\nabla_{x'}\rho(x')|^2}\Theta^{-1}h_2,\quad t\in[0,\de],\ x'\in \R^{n-1},\ y=0,\nn\\
    v(0)&=0,\quad t=0,\ x\in\R^{n}_+\nn,
    \end{align}
with $D:=DgDg^{\textsf{T}}\in W_\infty^2(\R^{n-1})$ and some functions $(f,g,h_1,h_2)\in X^1\times X^2\times Y^1\times Y^2$ such that $h_2|_{t=0}=0$. From the extension method above it follows that
    $$|\tilde{a}(x)-a(x_0)|+|\tilde{c}(x)-c(x_0)|+
|\tilde{B}(x)-B(x_0)|\le \om,$$ for all $x\in \overline{\R_+^n}$
where we can choose $\om>0$ arbitrarily small, by decreasing the
radius $r_0>0$ of the ball $B_{r_0}(x_0)=g(U(x_0))$. Furthermore it
holds that $|D(x)-I|\le\om,\ x\in \overline{\R_+^n}$, since we may
choose $|\nabla\rho|_\infty$ as small as we wish. An application of
Corollary \ref{corHS} yields a unique solution operator $S^H$ of
\eqref{CrooHStransf}, hence $\Theta S^H$ is the corresponding
solution operator for the chart $U(x_0)$. At this point we want to
remark that the function $\sqrt{1+|\nabla_{x'}\rho|^2}$ is a
multiplier for the spaces $W_p^{1-1/p}(\R^{n-1})$ and
$W_p^{2-1/p}(\R^{n-1})$, since $\rho\in C^3(\R^{n-1})$ has compact
support.

This above computation yields solution operators $\Theta_kS_k^H$ for
the charts $U_k,\ k\in\{N_1+1,\ldots,N\}$, hence we may write
\beq\label{LocSolHS}\begin{bmatrix}u_k\\\mu_k\end{bmatrix}=\Theta_kS_k^H
\begin{bmatrix}\Theta_k^{-1}(f_k+F_k(u,\mu))\\\Theta_k^{-1}(g_k+G_k(u,\mu))\\\Theta_k^{-1}(B\nabla\ph_k\cdot\nu)\mu\\
\Theta_k^{-1}(u\pa_\nu\ph_k)\end{bmatrix},\eeq for each
$k\in\{N_1+1,\ldots,N\}$. Summing \eqref{LocSolFS} and
\eqref{LocSolHS} over all charts $U_k$, $k\in\{1,\ldots,N\}$, we
obtain
\beq\label{LocSolAll}\begin{bmatrix}u\\\mu\end{bmatrix}=\sum_{k=1}^{N_1}
S_k^F
\begin{bmatrix}f_k+F_k(u,\mu)\\g_k+G_k(u,\mu)\end{bmatrix}+
\sum_{k=N_1+1}^N \Theta S_k^H
\begin{bmatrix}\Theta^{-1}(f_k+F_k(u,\mu))\\\Theta^{-1}(g_k+G_k(u,\mu))\\\Theta^{-1}(B\nabla\ph_k\cdot\nu)\mu\\
\Theta^{-1}(u\pa_\nu\ph_k)\end{bmatrix},\eeq since
$\{\ph_k\}_{k=1}^N$ is a partition of unity. By the boundedness of
the solution operators we obtain the estimate
\beq\label{LocSolAll2}|(u,\mu)|_{Z_\de^1\times Z_\de^2}\le M
(|f|_{X_\de^1}+|g|_{X_\de^2}+|u|_{L_p(J_0;H_p^2(\Om))}+|\pa_tu|_{L_p(J_0;L_p(\Om))}+|\mu|_{L_p(J_0;H_p^1(\Om))}),
\eeq for some constant $M>0$ which is independent of the interval
$J_0=[0,\de]$ under consideration. The term
$|u|_{L_p(J_0;H_p^2(\Om))}$ may be estimated by
$\de^{1/2p}C|u|_{Z_\de^1}$ with some constant $C>0$ being
independent of $J_0$. To estimate the remaining terms we need the
following result.
\begin{pro}\label{locprop3}
There exists a constant $M>0$, independent of $J_0$, such that
$$|\mu|_{L_p(J_0;H_p^1(\Om))}+|\pa_t u|_{L_p(J_0;L_p(\Om))}\le
M(|f|_{X_\de^1}+|g|_{X_\de^2}+|h_1|_{Y_\de^1}+|u|_{L_p(J_0;H_p^2(\Om))}).$$
\end{pro}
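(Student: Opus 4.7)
The plan is to fix $t\in J_0$ and derive, by combining the two equations in $\eqref{Dom}$, a single elliptic Neumann problem for $\mu(t,\cdot)$ in divergence form; then to apply $L_p$-elliptic regularity pointwise in $t$; and finally to recover $\pa_t u$ from $\eqref{Dom}_2$. Since $t$ enters only parametrically, the resulting constant is automatically independent of $J_0$.

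Solving $\eqref{Dom}_2$ for $\pa_t u$ gives $\beta\pa_t u=\mu-c\cdot\nabla\mu+\De u-g$. Substituting this identity into $\eqref{Dom}_1$ multiplied by $\beta$, and using $\diver a=0$ to rewrite $a\cdot\nabla\varphi=\diver(a\varphi)$ for any scalar $\varphi$, I obtain
\begin{equation*}
\mu-\diver\bigl((a+c)\mu\bigr)-\diver\bigl(A\nabla\mu\bigr)=\beta f+g+\diver F_1,
\end{equation*}
with $A(x):=\beta b(x)I-a(x)\otimes c(x)$ and $F_1:=-\nabla u+a\De u-ag$. The symmetric part of $A$ is precisely the matrix appearing in Proposition~\ref{HA}, so hypothesis~(H) yields uniform ellipticity $(A(x)\xi|\xi)\ge\ep\beta|\xi|^2$. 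The conormal derivative attached to this divergence form evaluates to $\beta h_1-h_2$ on $\pa\Om$, since $a\cdot\nu=c\cdot\nu=0$ cause all other contributions to drop.

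Because $\diver(a+c)=0$ and $(a+c)\cdot\nu=0$, the first-order term $-\diver((a+c)\mu)$ vanishes upon integration by parts against $\mu$, so the associated bilinear form is strictly coercive. Standard $L_p$-theory for second-order elliptic Neumann problems in divergence form then provides, for almost every $t\in J_0$,
\begin{equation*}
|\mu(t)|_{H_p^1(\Om)}\le C\bigl(|f(t)|_{L_p}+|g(t)|_{L_p}+|u(t)|_{H_p^2}+|h_1(t)|_{W_p^{1-1/p}(\pa\Om)}\bigr),
\end{equation*}
after bounding $|F_1|_{L_p}\le C(|u|_{H_p^2}+|g|_{L_p})$ and absorbing the $h_2$-contribution via the trace estimate $|\pa_\nu u|_{W_p^{1-1/p}(\pa\Om)}\le C|u|_{H_p^2(\Om)}$. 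Taking the $L_p$-norm in $t$ yields the bound on $|\mu|_{L_p(J_0;H_p^1(\Om))}$. The estimate for $\pa_t u$ is then immediate from $\eqref{Dom}_2$, which gives $|\pa_t u(t)|_{L_p}\le\tfrac{1}{\beta}(|\mu(t)|_{H_p^1}+|u(t)|_{H_p^2}+|g(t)|_{L_p})$, and this integrates to the claimed bound on $|\pa_t u|_{L_p(J_0;L_p(\Om))}$.

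The principal technical obstacle is recognizing that the right-hand side of the derived equation can be written purely as $F_0+\diver F_1$ with $F_0,F_1\in L_p(\Om)$, so that $H_p^1$-Neumann theory applies under the regularity $u\in L_p(J_0;H_p^2(\Om))$ actually available; treating $a\cdot\nabla\De u$ naively as a first-order derivative of $\De u$ would require $u\in H_p^3$, which is more than is at hand. The structural assumptions $\diver a=\diver c=0$ and $a\cdot\nu=c\cdot\nu=0$ are therefore indispensable, both for the divergence-form recasting and for the coercivity of the associated bilinear form.
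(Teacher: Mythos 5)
Your proof follows essentially the same route as the paper, which delegates this proposition to the proof of Proposition \ref{proHS2} in the Appendix: substitute $\eqref{Dom}_2$ into $\eqref{Dom}_1$ to obtain a second-order divergence-form elliptic Neumann problem for $\mu(t,\cdot)$, identify the leading matrix as uniformly elliptic via Proposition \ref{HA}, verify that the data lies in $L_p + \diver\bigl(L_p(\Om;\R^n)\bigr)$ under the available regularity $u\in H_p^2$, and conclude by $H_p^1$-Neumann elliptic regularity pointwise in $t$. Two remarks. First, you invoke ``standard $L_p$-theory for second-order elliptic Neumann problems in divergence form'' as a black box, whereas the paper establishes this in Steps 1--3 of the proof of Proposition \ref{proHS2}: solvability for a large spectral parameter, dissipativity of the $L_p$-realization to pass to $\lambda=1$, and Amann's interpolation--extrapolation scales to realize the $\diver F$ part of the data in $E_{-1/2}=\bigl(H_{p'}^1(\Om)\bigr)'$; that machinery is the technical heart of the argument and your proof implicitly relies on it. Second, your grouping folds $\De u$ and $a\cdot\nabla g$ into $\diver F_1$ even though both already lie in $L_p$ when $u\in H_p^2$ and $g\in H_p^1$, so that $F_1\cdot\nu=-h_2\ne 0$ and your conormal datum becomes $\beta h_1-h_2$, with $h_2$ then re-absorbed via the trace estimate $|\pa_\nu u|_{W_p^{1-1/p}(\pa\Om)}\le C|u|_{H_p^2(\Om)}$; the paper keeps these two terms in the $L_p$-part, leaving only $\diver(a\De u)$ with $(a\De u|\nu)=0$ on $\pa\Om$, so the boundary datum stays simply $h_1$. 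Your observation that the leading matrix $A=\beta bI-a\otimes c$ together with the first-order term $-\diver((a+c)\mu)$ gives a transparent $L_2$-coercivity argument is a nice streamlining of the paper's bookkeeping $\calA=\calA_0+\calA_1$, and it is correct since only the symmetric part of $A$, namely $\beta B-\tfrac12(a\otimes c+c\otimes a)$ from Proposition \ref{HA}, enters the ellipticity estimate.
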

\bpr
The proof follows the lines of the proof of Proposition \ref{proHS2}.
\epr

Choosing $\de>0$ sufficiently small, we obtain from \eqref{LocSolAll2} and Proposition \ref{locprop3} the estimate
$$|(u,\mu)|_{Z_\de^1\times Z_\de^2}\le M
(|f|_{X_\de^1}+|g|_{X_\de^2}+|h_1|_{Y_\de^1}+|h_2|_{Y_\de^2}+|u_0|_{X_p}),
$$
for a solution of \eqref{Dom}. This shows that the bounded operator
$L:Z_\de^1\times Z_\de^2\to X_\de^1\times X_\de^2\times \calY_\de$
defined by
    $$L(u,\mu)=\begin{bmatrix}
    \pa_t u-\diver(a\pa_t u)-\diver(B\nabla\mu)\\
    \mu-(c\cdot\nabla\mu)-\beta\pa_t u+\De u\\
    (B\nabla\mu\cdot\nu)\\ \pa_\nu u\\u|_{t=0}
    \end{bmatrix},$$
is injective and has closed range, i.e. it is semi Fredholm. Here
$\calY_\de$ is defined by
    $$\calY_\de:=\{(h_1,h_2,u_0)\in Y_\de^1\times Y_\de^2\times X_p:
    \pa_\nu u_0=h_2|_{t=0},\ p>3/2\},$$
which is a closed linear subspace of the Banach space $Y_\de^1\times Y_\de^2\times X_p$. To
show surjectivity, we apply again the Fredholm argument to the set
of data
    $$(\beta,a_\t,c_\t,B_\t)=(1-\t)(\beta,0,0,I_n)+\t(\beta,a,c,B),\quad \t\in[0,1].$$
The corresponding operators $L_\t$ are semi Fredholm by the above
procedure and by Lemma \ref{CHGspecial} the operator $L_0$ is
bijective. The continuity of the Fredholm index thus yields that the
index of $L_1=L$ is 0 and therefore the operator $L$ is bijective as
well. A successive application of the above arguments yields
existence of a unique solution $(u,\mu)$ of \eqref{Dom} on an
arbitrary bounded interval $[0,T]$. This completes the proof of
Theorem \ref{lin}.

\epr

\section{Local Well-Posedness}

Let $p>n+2$, $f\in X^1$, $g\in X^2$, $h_j\in Y^j,\ j=1,2$ and
$\psi_0\in X_p$ be given such that the compatibility condition
$\pa_\nu\psi_0=h_2|_{t=0}$ is satisfied. In this section we consider
the quasilinear system \beq\label{LWPCHG}\begin{split}
\pa_t\psi-\diver (a(x,\psi,\nabla\psi) \pa_t\psi)&=\diver(b(x,\psi,\nabla\psi)\nabla \mu)+f,\quad t>0,\ x\in\Om,\\
\mu-c(x,\psi,\nabla\psi)\cdot\nabla\mu&=\beta \pa_t\psi-\De \psi+\Phi'(\psi)+g,\quad t>0,\ x\in\Om,\\
b(x,\psi,\nabla\psi)\partial_\nu\mu&=h_1,\quad t>0,\ x\in\Ga,\\
\pa_\nu \psi&=h_2,\quad t>0,\ x\in \Ga,\\
\psi(0)&=\psi_0,\quad t=0,\ x\in\Om, \end{split}\eeq where $\Phi\in
C^{3-}(\R)$. Assume that we have given vector fields
$a,c\in C^1(\bar{\Om};C^{2-}(\R\times\R^n;\R^n))$ and a scalar valued function $b\in C^1(\bar{\Om};C^{2-}(\R\times\R^n;\R))$ such that
    \begin{equation}\label{tilde}
    \tilde{a}(x):=a(x,\psi_0(x),\nabla\psi_0(x)),\ \tilde{b}(x):=b(x,\psi_0(x),\nabla\psi_0(x)),\ \tilde{c}(x):=c(x,\psi_0(x),\nabla\psi_0(x))
    \end{equation}
satisfy the conditions
    \begin{equation}\label{LWPdiv}
    \diver\tilde{a}(x)=\diver\tilde{c}(x)=0,\ x\in\Om,
    \end{equation}
    \begin{equation}\label{LWPbc}
    (\tilde{a}(x)|\nu(x))=(\tilde{c}(x)|\nu(x))=0,\ x\in\pa\Om.
    \end{equation}
Suppose furthermore that $(\beta,\tilde{a},\tilde{c},\tilde{b})$ are subject to Hypothesis (H) for each $x\in\overline{\Om}$. Observe
that for $p>n+2$ we have $\psi_0\in X_p=B_{pp}^{3-2/p}(\Om)\hookrightarrow
C^2(\overline{\Om})$, hence $\tilde{a},\tilde{c}\in
[C^1(\overline{\Om})]^n$ and $\tilde{b}\in
C^1(\overline{\Om})$ and therefore the coefficients, frozen at $\psi_0$, satisfy the assumptions in Theorem \ref{lin}.

Thanks to Theorem \ref{lin} we may define a pair of functions
$(u^*,v^*)\in Z^1\times Z^2$ as the unique solution of the
linearized system \beq\begin{split}
u^*_t-\diver (\tilde{a}u^*_t)&=\diver(\tilde{b}\nabla v^*)+f,\quad t>0,\ x\in\Om,\\
v^*-\tilde{c}\cdot\nabla v^*&=\beta u^*_t-\De u^*+g,\quad t>0,\ x\in\Om,\\
\tilde{b}\nabla v^*\cdot\nu&=h_1,\quad t>0,\ x\in\Ga,\\
\pa_\nu u^*&=h_2,\quad t>0,\ x\in \Ga,\\
u^*(0)&=\psi_0,\quad t=0,\ x\in\Om.
\end{split}\eeq
We set
$$\E_1=Z^1(T)\times
Z^2(T),\quad\hspace{0.05cm}_0\E_1=\{(u,v)\in\E_1:u|_{t=0}=0\},$$
$$\E_0=X^1(T)\times X^2(T)\times Y^1(T)\times
Y^2(T),\quad\hspace{0.05cm}_0\E_0=\{(f,g,h_1,h_2)\in
\E_0:h_2|_{t=0}=0\}$$ and denote by $|\cdot|_1$ and $|\cdot|_0$ the
canonical norms in $\E_1$ and $\E_0$, respectively. We define a
linear operator $\L:\E_1\to\E_0$ by
$$\L(u,v)=\begin{bmatrix} \pa_tu-\diver (\tilde{a} \pa_tu)-\diver(\tilde{b}\nabla v)\\
v-\tilde{c}\cdot\nabla v-\beta \pa_tu+\De u\\
\tilde{b}\nabla v\cdot\nu\\
\pa_\nu u
\end{bmatrix}$$
and a nonlinear function
$G:\hspace{0.05cm}_0\E_1\times\E_1\to\hspace{0.05cm}_0\E_0$ by
$$G((u,v),(u^*,v^*))=
\begin{bmatrix} G_1((u,v),(u^*,v^*))+G_2((u,v),(u^*,v^*))\\
G_3((u,v),(u^*,v^*))+G_4((u,v),(u^*,v^*))\\
G_5((u,v),(u^*,v^*))\\
0,
\end{bmatrix}
$$
where
$$G_1(u,u^*)=\diver[(a(x,u+u^*,\nabla(u+u^*))-\tilde{a})\pa_t(u+u^*)],$$
$$G_2((u,v),(u^*,v^*))=\diver[(b(x,u+u^*,\nabla(u+u^*))-\tilde{b})\nabla(v+v^*)],$$
$$G_3((u,v),(u^*,v^*))=(c(x,u+u^*,\nabla(u+u^*))-\tilde{c})\cdot\nabla(v+v^*),$$
$$G_4(u,u^*)=\Phi'(u+u^*),$$
and
$$G_5((u,v),(u^*,v^*))=[\tilde{b}-b(x,u+u^*,\nabla(u+u^*))]\nabla(v+v^*)\cdot\nu.$$

Considering $\L$ as an operator from $_0\E_1$ to $_0\E_0$, we obtain
from Theorem \ref{lin} that $\L$ is a bounded isomorphism and by the
open mapping theorem $\L$ is invertible with bounded inverse
$\L^{-1}$. It is easily seen that $(\psi,\mu):=(u+u^*,v+v^*)$ is a
solution of \eqref{LWPCHG} if and only if
$$\L(u,v)=G((u,v),(u^*,v^*))\ \text{or equivalently}\
(u,v)=\L^{-1}G((u,v),(u^*,v^*)).$$ Consider a ball $\B_r\subset
\hspace{0.05cm}_0\E_1$ where $r\in (0,1]$ will be fixed later.
Define a nonlinear operator by
$\calT(u,v):=\L^{-1}G((u,v),(u^*,v^*))$. To apply the contraction
mapping principle we have to show that $\calT\B_r\subset\B_r$ and
that there exists a constant $\ka<1$ such that the contractive
inequality
\beq\label{contrineq}|\calT(u,v)-\calT(\bar{u},\bar{v})|_1\le\ka|(u,v)-(\bar{u},\bar{v})|_1\eeq
holds for all $(u,v),(\bar{u},\bar{v})\in\B_r$. The following
proposition is crucial to prove the desired properties of the
operator $\calT$.
\begin{pro}\label{lipschest}
Let $p>n+2$, $J=[0,T]$ and assume $\Phi\in C^{3-}(\R)$. Then there exists a constant $C>0$, independent of $T$
and $r$, and functions $\mu_j=\mu_j(T)$ with $\mu_j(T)\to 0$ as
$T\to 0$, $j=1,\ldots,5$ such that for all
$(\tilde{u}_1,\tilde{v}_1),(\tilde{u}_2,\tilde{v}_2)\in\B_r$ the
following statements hold. \be \setlength{\itemsep}{1ex}
\item
$|G_1(\tilde{u}_1,u^*)-G_1(\tilde{u}_2,u^*)|_{X^1}\le
C(r+\mu_1(T))|(\tilde{u}_1,\tilde{v}_1)-(\tilde{u}_2,\tilde{v}_2)|_1$;
\item $|G_2((\tilde{u}_1,\tilde{v}_1),(u^*,v^*))-G_2((\tilde{u}_2,\tilde{v}_2),(u^*,v^*))|_{X^1}\le
C(r+\mu_2(T))|(\tilde{u}_1,\tilde{v}_1)-(\tilde{u}_2,\tilde{v}_2)|_1$;
\item $|G_3((\tilde{u}_1,\tilde{v}_1),(u^*,v^*))-G_3((\tilde{u}_2,\tilde{v}_2),(u^*,v^*))|_{X^2}\le
C(r+\mu_3(T))|(\tilde{u}_1,\tilde{v}_1)-(\tilde{u}_2,\tilde{v}_2)|_1$;
\item $|G_4(\tilde{u}_1,u^*)-G_4(\tilde{u}_2,u^*)|_{X^2}\le
C\mu_4(T)|(\tilde{u}_1,\tilde{v}_1)-(\tilde{u}_2,\tilde{v}_2)|_1$;
\item
$|G_5((\tilde{u}_1,\tilde{v}_1),(u^*,v^*))-G_5((\tilde{u}_2,\tilde{v}_2),(u^*,v^*))|_{Y^1}\le
C(r+\mu_5(T))|(\tilde{u}_1,\tilde{v}_1)-(\tilde{u}_2,\tilde{v}_2)|_1.$
\ee
\end{pro}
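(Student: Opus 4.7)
The plan is to exploit the Sobolev embeddings available for $p > n+2$ together with the fact that every element of $_0Z^1$ vanishes at $t = 0$. By the mixed-derivative theorem combined with $H_p^3(\Om)\hookrightarrow C^2(\overline{\Om})$ (valid since $p>n$), one obtains
$$Z^1 \hookrightarrow BUC(J; C^2(\overline{\Om})) \cap C^\gamma(J; C^1(\overline{\Om}))$$
for some $\gamma > 0$, with embedding constant uniform in $T \in (0, T_0]$. Hence for $w\in\, _0Z^1$ the vanishing initial trace gives $|w|_{BUC(J; C^1(\overline{\Om}))}\le CT^\gamma|w|_{Z^1}$, a smallness factor which I henceforth denote $\mu(T)$. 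Similarly $Z^2\hookrightarrow L_p(J; H_p^2(\Om))\hookrightarrow L_p(J; C^1(\overline{\Om}))$, and $|u^*-\psi_0|_{L_\infty(J; C^1(\overline{\Om}))}\to 0$ as $T\to 0$, since $u^*(0)=\psi_0$ and $u^*\in C(J; B_{pp}^{3-2/p}(\Om))\hookrightarrow C(J; C^1(\overline{\Om}))$.

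For each $j\in\{1,2,3,5\}$ I would telescope the difference as
$$G_j(\tilde u_1, \tilde v_1) - G_j(\tilde u_2, \tilde v_2) = [\kappa_1 - \kappa_2]\Psi_1 + [\kappa_2 - \tilde\kappa](\Psi_1 - \Psi_2),$$
inside a divergence for $G_1, G_2$ and inside a trace for $G_5$, where $\kappa_j$ denotes the coefficient ($a$, $b$, or $c$) evaluated at $(x,\tilde u_j+u^*,\nabla(\tilde u_j+u^*))$, $\tilde\kappa$ is the frozen coefficient from \eqref{tilde}, and $\Psi_j$ is the factor that carries the derivative ($\pa_t(\tilde u_j+u^*)$ or $\nabla(\tilde v_j+v^*)$). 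By Lipschitz continuity in the coefficients' second and third arguments,
$$|\kappa_1-\kappa_2|_{L_\infty(J\times\overline{\Om})}\le L\,|\tilde u_1-\tilde u_2|_{L_\infty(J;C^1(\overline{\Om}))}\le L\mu(T)|\tilde u_1-\tilde u_2|_{Z^1},$$
while the analogous argument applied to $\kappa_2-\tilde\kappa$ (whose arguments coincide at $t=0$) delivers $|\kappa_2-\tilde\kappa|_{L_\infty(J\times\overline{\Om})}\le C\mu(T)(r+|u^*|_{Z^1})$. The ``$r$''-term in the proposition arises from the multiplicative constants $|\tilde u_j+u^*|_{Z^1}\le r+|u^*|_{Z^1}$ and $|\tilde v_j+v^*|_{Z^2}\le r+|v^*|_{Z^2}$ appearing alongside the telescoped factors, whose $r$-dependent parts survive once $|u^*|_{Z^1}$ and $|v^*|_{Z^2}$ are absorbed into the overall constant $C$.

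The estimates are then carried out target-norm by target-norm. For $G_1, G_2$ (target $X^1$) I expand via product and chain rules and distribute norms by H\"older's inequality, placing the coefficient-difference factor in $L_\infty(L_\infty)$, one differentiated factor in $L_p(L_\infty)$ using $\pa_t\psi\in L_p(J; H_p^1(\Om))\hookrightarrow L_p(J; L_\infty(\Om))$, and the other in $L_\infty(L_p)$ via $Z^1\hookrightarrow C(J; H_p^2(\Om))$. $G_3$ (target $X^2$) requires an additional spatial derivative on the product and is handled identically. $G_5$ (target $Y^1$) is lifted via the trace retraction $H_p^1(\Om)\twoheadrightarrow W_p^{1-1/p}(\Ga)$, and the natural $\Om$-extension of its integrand is estimated as for $G_2$. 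The case $G_4$ is simpler: the mean value theorem applied to $\Phi'$ and $\Phi''$ (both globally Lipschitz on bounded ranges since $\Phi\in C^{3-}$) yields in every summand a factor $|\tilde u_1-\tilde u_2|_{L_\infty(J;C(\overline{\Om}))}\le \mu(T)|\tilde u_1-\tilde u_2|_{Z^1}$, so no $r$-term appears. The main technical obstacle is the bookkeeping of product estimates when differentiating coefficients of the form $b(x,\psi,\nabla\psi)$: applying a divergence produces a term containing $D^2\psi$, and $D^2\psi$ for $\psi\in Z^1$ does not lie in $L_\infty(J; L_\infty(\Om))$ in general, so one must lean on the sharper embedding $Z^1\hookrightarrow C^\gamma(J; C^2(\overline{\Om}))$, available precisely because $p>n+2$, and distribute the norms through each product so that at most one factor is placed in its strongest available space.
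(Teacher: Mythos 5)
Your proposal is sound and takes essentially the same approach as the paper: both telescope each nonlinearity as $[\kappa_1-\kappa_2]\Psi_1+[\kappa_2-\tilde\kappa](\Psi_1-\Psi_2)$, expand the divergence by the chain rule (the paper's identity \eqref{divident}), and distribute the factors across $L_\infty(L_\infty)$, $L_p(L_\infty)$ and $L_\infty(L_p)$ exactly as you describe, with $G_5$ handled by trace theory and $G_2$, and $G_4$ handled by Lipschitz continuity of $\Phi'$ and $\Phi''$. The genuine difference is where the small factor is placed: the paper keeps the Lipschitz bound $|a(\cdot,u_1,\nabla u_1)-a(\cdot,u_2,\nabla u_2)|_{\infty,\infty}\le L(R)C|\tilde u_1-\tilde u_2|_{Z^1}$ without smallness, and extracts $r+\mu(T)$ from the \emph{other} factor by splitting $\nabla\pa_t u_1=\nabla\pa_t\tilde u_1+\nabla\pa_t u^*$ (the first of $X^1$-norm $\le r$, the second vanishing as $T\to 0$ because $u^*$ is a fixed $L_p$-function), with $\mu(T)=\max\{|\nabla\pa_t u^*|_{X^1},|u^*-\psi_0|_{\infty,X_p}\}$. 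You instead put the smallness on the coefficient difference itself, using the H\"older-in-time embedding $_0Z^1\hookrightarrow\,_0C^\gamma(J;C^2(\overline\Omega))$ (valid exactly when $p>n+2$) and the vanishing trace of $\tilde u_1-\tilde u_2$ at $t=0$ to get $\mu(T)=CT^\gamma$. Both routes are valid and standard applications of the same embedding structure of $Z^1$; yours is if anything marginally sharper since it delivers $C\mu(T)$ directly, making the separate $r$-term redundant.

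Two small inconsistencies in your write-up (which do not affect the substance): you assert both $Z^1\hookrightarrow BUC(J;C^2(\overline\Omega))$ and, a few lines later, that ``$D^2\psi$ for $\psi\in Z^1$ does not lie in $L_\infty(J;L_\infty(\Omega))$ in general''; these contradict one another, and for $p>n+2$ it is the former that holds, which is precisely what you need. And your explanation of the provenance of the ``$r$''-term --- that it comes from absorbing $|u^*|_{Z^1}$ into $C$ while the $r$-dependent part of $r+|u^*|_{Z^1}$ survives --- is muddled: those multiplicative bounds $r+|u^*|_{Z^1}$ accompany a factor $\mu(T)$, so with $r\le 1$ the product is simply $\le C\mu(T)$, and the $r$ in the proposition plays no independent role in your version of the argument. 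In the paper, by contrast, $r$ really is needed because the coefficient difference carries no small factor and the smallness $r+\mu(T)$ comes entirely from the split $\Psi$-factor.
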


\bpr Define the ball $\B_r(u^*,v^*)\subset\E_1$ by means of
$$\B_r(u^*,v^*):=\{(u,v)\in\E_1:(u,v)=(\tilde{u},\tilde{v})+(u^*,v^*),\
(\tilde{u},\tilde{v})\in\B_r\}.$$ Let $(u_j,v_j)\in B_r(u^*,v^*),\
j\in\{1,2\}$. Observe that
$$|u_j-u^*|_{\infty,X_p}\le C_0|u_j-u^*|_{Z^1}\le r,$$
with some $C_0>0$, which is independent of $T>0$. This yields
$$|u_j|_{\infty,X_p}\le C r+|u^*|_{\infty,X_p}\le
C_0+|u^*|_{\infty,X_p}=:R,$$ since $r\in (0,1]$. To prove the first
part, note that
\begin{align*}
\diver[(a(x,u_1,\nabla u_1)-\tilde{a})\pa_t
u_1]&-\diver[(a(x,u_2,\nabla
u_2)-\tilde{a})\pa_t u_2]\\
&=(a(x,u_1,\nabla u_1)-\tilde{a})\cdot\nabla\pa_tu_1-(a(x,u_2,\nabla
u_2)-\tilde{a})\cdot\nabla\pa_tu_2\\
&+\diver(a(x,u_1,\nabla u_1)-\tilde{a})\pa_t
u_1-\diver(a(x,u_2,\nabla u_2)-\tilde{a})\pa_t u_2.
\end{align*}
Next we have
\begin{multline*}
(a(x,u_1,\nabla u_1)-\tilde{a})\cdot\nabla\pa_tu_1-(a(x,u_2,\nabla
u_2)-\tilde{a})\cdot\nabla\pa_tu_2\\
=(a(x,u_1,\nabla u_1)-a(x,u_2,\nabla u_2))\cdot\nabla\pa_t
u_1+(a(x,u_2,\nabla u_2)-\tilde{a})\cdot(\nabla\pa_t u_1-\nabla\pa_t
u_2).
\end{multline*}
Therefore we may estimate
\begin{multline*}|(a(\cdot,u_1,\nabla
u_1)-a(\cdot,u_2,\nabla u_2))\cdot\nabla\pa_t u_1|_{X^1}\\
\le |a(\cdot,u_1,\nabla u_1)-a(\cdot,u_2,\nabla
u_2)|_{\infty,\infty}(|\nabla\pa_t u_1-\nabla\pa_t
u^*|_{X^1}+|\nabla\pa_t u^*|_{X^1})\\
\le L(R)C(r+\mu(T))(|u_1-u_2|_{\infty,\infty}+|\nabla u_1-\nabla
u_2|_{\infty,\infty})\\
\le L(R)C(r+\mu(T))|u_1-u_2|_{Z^1},
\end{multline*}
as well as
\begin{multline*}|(a(\cdot,u_2,\nabla
u_2)-\tilde{a})\cdot(\nabla\pa_t u_1-\nabla\pa_t u_2)|_{X^1}\\
\le (|a(\cdot,u_1,\nabla u_1)-a(\cdot,u^*,\nabla
u^*)|_{\infty,\infty}+|a(\cdot,u^*,\nabla
u^*)-\tilde{a}|_{\infty,\infty})(|\nabla\pa_t u_1-\nabla\pa_t
u_2|_{X^1}\\
\le L(R)C(r+\mu(T))|u_1-u_2|_{Z^1},
\end{multline*}
where $\mu(T):=\max\{|\nabla\pa_t
u^*|_{X^1},|u^*-\psi_0|_{\infty,X_p}\}\to 0$ as $T\to 0$ since
$u^*\in Z^1$ is fixed and $u^*|_{t=0}=\psi_0$. For the remaining
terms we use the identity \beq\label{divident}\diver(a(x,u,\nabla
u))={\diver}_xa(x,u,\nabla u)+\pa_za(u,\nabla u)\cdot\nabla u+\pa_q
a(u,\nabla u):\nabla^2 u,\eeq where $a=a(x,z,q),\
q=[q_1,\ldots,q_n]^{\textsf{T}}$
$$\pa_q a(u,\nabla u):\nabla^2 u:=\sum_{i,j=1}^n\pa_{q_i}a_j(u,\nabla
u)\pa_i\pa_j u.$$ Furthermore we make use of
\begin{multline*}
\diver(a(x,u_1,\nabla u_1)-\tilde{a})\pa_t u_1-\diver(a(x,u_2,\nabla
u_2)-\tilde{a})\pa_t u_2\\
=(\diver(a(x,u_1,\nabla u_1))-\diver(a(x,u_2,\nabla u_2)))\pa_t
u_1+\diver(a(x,u_2,\nabla u_2)-\tilde{a})(\pa_t u_1-\pa_t u_2).
\end{multline*}
Let us first estimate $\diver(a(x,u_1,\nabla
u_1))-\diver(a(x,u_2,\nabla u_2))$ in $L_\infty(0,T;L_\infty(\Om))$.
By \eqref{divident} we obtain
\begin{align*}
|\diver(&a(\cdot,u_1,\nabla u_1))-\diver(a(\cdot,u_2,\nabla
u_2))|_{\infty,\infty}\\
&\le|\pa_za(\cdot,u_1,\nabla u_1)-\pa_za(\cdot,u_2,\nabla
u_2)|_{\infty,\infty}|\nabla
u_1|_{\infty,\infty}+|\pa_za(\cdot,u_2,\nabla
u_2)|_{\infty,\infty}|\nabla u_1-\nabla
u_2|_{\infty,\infty}\\
&\hspace{0.2cm}+|\pa_qa(\cdot,u_1,\nabla
u_1)-\pa_qa(\cdot,u_2,\nabla u_2)|_{\infty,\infty}|\nabla^2
u_1|_{\infty,\infty}+|\pa_q a(\cdot,u_2,\nabla
u_2)|_{\infty,\infty}|\nabla^2 u_1-\nabla^2
u_2|_{\infty,\infty}\\
&\le C(R,u^*)|u_1-u_2|_1,
\end{align*}
where $C(R,u^*)>0$ and depends only on $R$ and the fixed function
$u^*\in Z^1$ but not on $T$ and $r$; recall that $r\in (0,1]$ and
$(u_1-u_2)|_{t=0}=0$. Furthermore
$$|\pa_t u_1|_{X^1}\le |\pa_t u_1-\pa_t u^*|_{X^1}+|\pa_t u^*|_{X^1}\le C(r+|\pa_t u^*|_{X^1}),$$
with $C>0$ being independent of $T$ and $|\pa_t u^*|_{X^1}\to 0$ as
$T\to 0$. In a similar way we obtain
\begin{align*}
|\diver(&a(\cdot,u_2,\nabla u_2))-\diver(a(\cdot,\psi_0,\nabla
\psi_0))|_{\infty,\infty}\\
&\le|\pa_za(\cdot,u_2,\nabla u_2)-\pa_za(\cdot,\psi_0,\nabla
\psi_0)|_{\infty,\infty}|\nabla
\psi_0|_{\infty}+|\pa_za(\cdot,u_2,\nabla
u_2)|_{\infty,\infty}|\nabla u_2-\nabla
\psi_0|_{\infty,\infty}\\
&\hspace{0.2cm}+|\pa_qa(\cdot,u_2,\nabla
u_2)-\pa_qa(\cdot,\psi_0,\nabla \psi_0)|_{\infty,\infty}|\nabla^2
\psi_0|_{\infty}+|\pa_q a(\cdot,u_2,\nabla
u_2)|_{\infty,\infty}|\nabla^2 u_2-\nabla^2
\psi_0|_{\infty,\infty}\\
&\le C(R,u^*)|u_2-\psi_0|_{\infty,X_p}\\
&\le C(R,u^*)(r+|u^*-\psi_0|_{\infty,X_p}).
\end{align*}
Note that $|u^*-\psi_0|_{\infty,X_p}\to 0$ as $T\to 0$ since
$u^*|_{t=0}=\psi_0$. Finally it holds that $|\pa_t u_1-\pa_t
u_2|_{X^1}\le |u_1-u_2|_1$. This proves (i). Statements (ii) and
(iii) follow in a very similar way, while (v) follows from trace
theory and (ii).
To prove (iv), we use the condition $\Phi\in C^{3-}(\R)$ to conclude
\begin{align*}
|\Phi'&(u_1)-\Phi'(u_2)|_{X^2}\le
T^{1/p}(|\Phi'(u_1)-\Phi'(u_2)|_{\infty,\infty}+|\nabla\Phi'(u_1)-\nabla\Phi'(u_2)|_{\infty,\infty}\\
&\le
T^{1/p}C(R)(|u_1-u_2|_{\infty,\infty}+|\Phi''(u_1)|_{\infty,\infty}|\nabla
u_1-\nabla
u_2|_{\infty,\infty}+|u_2|_{\infty,\infty}|\Phi''(u_1)-\Phi''(u_2)|_{\infty,\infty})\\
&\le T^{1/p}C(R,u^*)(|u_1-u_2|_{\infty,\infty}+|\nabla u_1-\nabla
u_2|_{\infty,\infty})\\
&\le T^{1/p}C(R,u^*)|u_1-u_2|_1,
\end{align*}
where $C(R,u^*)>0$ does not depend on $T>0$ and $r\in (0,1]$. The
proof is complete.

\epr

With the help of Proposition \ref{lipschest} we are able to prove
the desired properties of the operator $\calT$ defined above. We
first care about the contraction mapping property.
\beq\label{contr}\begin{split} |\calT(u_1,v_1)-\calT(u_2,v_2)|_1&\le
|\L^{-1}||G((u_1,v_1),(u^*,v^*))-G((u_2,v_2),(u^*,v^*))|_0\\
&\le
|\L^{-1}|\Big(|G_1(u_1,u^*)-G_1(u_2,u^*)|_{X^1}\\
&\hspace{2cm}+|G_2((u_1,v_1),(u^*,v^*))-G_2((u_2,v_2),(u^*,v^*))|_{X^1}\\
&\hspace{2cm}+|G_3((u_1,v_1),(u^*,v^*))-G_3((u_2,v_2),(u^*,v^*))|_{X^2}\\
&\hspace{2cm}+|G_4(u_1,u^*)-G_4(u_2,u^*)|_{X^2}\\
&\hspace{2cm}+|G_5((u_1,v_1),(u^*,v^*))-G_5((u_2,v_2),(u^*,v^*))|_{Y^1}\Big)\\
&\le C(r+\mu(T))|(u_1,v_1)-(u_2,v_2)|_1.
\end{split}
\eeq where $\mu=\mu(T)$ is a function with the property that
$\mu(T)\to 0$ as $T\to 0$ and $C>0$ is a constant which does not
depend on $T>0$. Thus, if $T>0$ and $r\in(0,1]$ are sufficiently
small we obtain \eqref{contrineq}. The self mapping property can be
shown in a similar way. The above computation yields
\beq\label{selfm}\begin{split}
|\calT(u,v)|_1&\le |\calT(u,v)-\calT(0,0)|_1+|\calT(0,0)|_1\\
&\le C\left((r+\mu(T))|(u,v)|_1+|G((0,0),(u^*,v^*))|_0\right)\\
&\le C\left((r+\mu(T))r+|G((0,0),(u^*,v^*))|_0\right).
\end{split}
\eeq Since $G((0,0),(u^*,v^*))$ is a fixed function in $\E_0$ it
follows that $|G((0,0),(u^*,v^*))|_{0}\to 0$ as $T\to 0$, whence
$\calT\B_r\subset\B_r$, provided that $T>0$ and $r\in(0,1]$ are
small enough. The contraction mapping principle yields a unique
fixed point $(\hat{u},\hat{v})\in\hspace{0.05cm}_0\E_1$ or
equivalently $(\psi,\mu):=(\hat{u}+u^*,\hat{v}+v^*)\in\E_1$ is the
unique local solution of \eqref{LWPCHG}. Therefore we have the
following result.
\begin{thm}\label{LWPthm}
Let $p>n+2$, $J_0=[0,T_0]$ and suppose that $\Phi\in C^{3-}(\mathbb{R})$,
$a,c\in C^1(\bar{\Om};C^{2-}(\R\times\R^n;\R^n))$ and $b\in C^1(\bar{\Om};C^{2-}(\R\times\R^n;\R))$. Then there exists an interval $J=[0,T]\subset J_0$, such that \eqref{LWPCHG} admits a unique solution
$$\psi\in H_p^1(J;H_p^1(\Om))\cap L_p(J;H_p^3(\Om))=Z^1,\quad \mu\in
L_p(J;H_p^2(\Om))=Z^2,$$ if the data are subject to the
following conditions.
    \begin{enumerate}
    \item $f\in L_p(J;L_p(\Om))=X^1$,
    \item $g\in L_p(J;H_p^1(\Om))=X^2$,
    \item $h_1\in L_p(J;W_p^{1-1/p}(\Ga))=Y^1$,
    \item $h_2\in W_p^{1-1/2p}(J;L_p(\Ga))\cap
    L_p(J;W_p^{2-1/p}(\Ga))=Y^2$,
    \item $\psi_0\in B_{pp}^{3-2/p}(\Om)=X_p$,
    \item $\pa_\nu \psi_0=h_2|_{t=0}$,
    \item $(\beta,\tilde{a},\tilde{b},\tilde{c})$ satisfy (H) for all $x\in\bar{\Omega}$ as well as \eqref{LWPdiv} and \eqref{LWPbc}.
    \end{enumerate}
\end{thm}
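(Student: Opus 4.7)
The plan is to set up a fixed-point argument on a small ball in a suitable space with zero time-trace. First I would use Theorem \ref{lin} applied to the linearized problem with coefficients frozen at the initial data, namely $(\beta,\tilde a,\tilde b,\tilde c)$ with $\tilde a, \tilde b, \tilde c$ as in \eqref{tilde}. By assumption (vii) the frozen coefficients satisfy the hypotheses of Theorem \ref{lin}, so there is a unique reference solution $(u^*,v^*)\in Z^1\times Z^2$ to the linearization carrying the inhomogeneity $(f,g,h_1,h_2)$ and the initial value $\psi_0$. This also reduces the problem to the homogeneous setting: writing $(\psi,\mu)=(u+u^*,v+v^*)$, the pair $(\psi,\mu)$ solves \eqref{LWPCHG} if and only if $(u,v)\in{}_0\E_1$ satisfies $\L(u,v)=G((u,v),(u^*,v^*))$.

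By Theorem \ref{lin} the operator $\L:{}_0\E_1\to{}_0\E_0$ is a bounded isomorphism; the open mapping theorem yields a bounded inverse $\L^{-1}$. Thus the quasilinear problem becomes the fixed-point equation $(u,v)=\calT(u,v):=\L^{-1}G((u,v),(u^*,v^*))$ on ${}_0\E_1$. The plan is then to apply the contraction mapping principle on a ball $\B_r\subset{}_0\E_1$ by choosing first $r\in(0,1]$ and then $T>0$ sufficiently small.

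The heart of the argument is Proposition \ref{lipschest}, which I would invoke componentwise on the five nonlinear contributions $G_1,\dots,G_5$. Summing the five estimates and multiplying by $|\L^{-1}|$ produces
\[
|\calT(u_1,v_1)-\calT(u_2,v_2)|_1 \le C(r+\mu(T))\,|(u_1,v_1)-(u_2,v_2)|_1,
\]
with $\mu(T)\to 0$ as $T\to 0$ and $C>0$ independent of $T$ and $r$, which is \eqref{contr}. Choosing first $r$ small, then $T$ small, makes the prefactor strictly less than $1$, giving the contraction. The self-mapping property $\calT\B_r\subset\B_r$ follows from the same estimate applied between $(u,v)$ and $(0,0)$, together with the fact that $|G((0,0),(u^*,v^*))|_0\to 0$ as $T\to 0$ because the fixed function $(u^*,v^*)$ satisfies $u^*|_{t=0}=\psi_0$ and the nonlinear terms vanish in the limit; this is \eqref{selfm}. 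The contraction mapping principle then furnishes a unique fixed point $(\hat u,\hat v)\in\B_r$, and $(\psi,\mu):=(\hat u+u^*,\hat v+v^*)$ is the desired unique local solution.

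The main obstacle is ensuring that all the constants appearing in the Lipschitz/self-map estimates are independent of both $T\in(0,T_0]$ and $r\in(0,1]$, so that the smallness can be extracted from the factor $(r+\mu(T))$ alone. This depends crucially on two structural facts: that $(u,v)$ and the differences $(u_1-u_2,v_1-v_2)$ vanish at $t=0$ (so embeddings such as $Z^1\hookrightarrow C(J;X_p)$ and $\pa_t u\in L_p(J;L_p(\Om))$ behave with constants independent of $T$), and that the frozen coefficients match the variable ones at $t=0$, so that quantities like $|u^*-\psi_0|_{\infty,X_p}$ and $|\pa_t u^*|_{X^1}$ tend to zero as $T\to 0$. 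Both ingredients are built into Proposition \ref{lipschest}, which makes the final assembly routine.
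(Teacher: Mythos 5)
Your proposal reproduces the paper's argument essentially verbatim: freeze the coefficients at $\psi_0$ to obtain the reference pair $(u^*,v^*)$ via Theorem~\ref{lin}, recast the problem as a fixed-point equation $(u,v)=\L^{-1}G((u,v),(u^*,v^*))$ on a small ball in ${}_0\E_1$, and close the contraction using the componentwise Lipschitz estimates of Proposition~\ref{lipschest} together with the $T$-independence of constants afforded by the zero time-trace and $u^*|_{t=0}=\psi_0$. This is precisely the paper's proof, including the roles of \eqref{contr} and \eqref{selfm}.
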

\begin{rem}\label{remlwp}
An inspection of the proof of Theorem \ref{LWPthm} shows that the assumption $p>n+2$ can be relaxed to $p>(n+2)/3$ in the semilinear case, i.e.\ if $(a,b,c)$ are independent of $\psi$ and $\nabla\psi$. Indeed, it remains to estimate the nonlinearity $\Phi'(\psi)$ in $L_p(0,T;H_p^1(\Omega))$. However, in the sequel we will always assume the stronger condition $p>n+2$.
\end{rem}

\section{Global Well-Posedness}\label{GWPCHGSEC}

Let $n\le 3$ and $p>n+2$ according to Theorem \ref{LWPthm}. In this section we consider the semilinear version of \eqref{LWPCHG}, i.e.\ we assume that $a=a(x)$,
$c=c(x)$ and $B=b(x)I$. Then, a successive application
of Theorem \ref{LWPthm} yields a maximal interval of
existence $J_{\max}=[0,T_{\max})$ for the solution $(\psi,\mu)$ of
\eqref{LWPCHG}, i.e. \eqref{LWPCHG} admits a unique solution and
    $$\psi\in H_p^1(J;H_p^1(\Om))\cap L_p(J;H_p^3(\Om)),\quad \mu\in
    L_p(J;H_p^2(\Om)),$$
for each interval $J=[0,T]\subset J_{\max}$.

Suppose $T_{\max}<\infty$ and let $J=[0,T]\subset [0,T_{\max})$. We start with an a priori estimate for the
solution $\psi\in Z^1$ on the maximal interval of existence
$J_{\max}$. To do so we multiply $\eqref{LWPCHG}_1$ by $\mu$,
$\eqref{LWPCHG}_2$ by $-\pa_t\psi$ and integrate by parts to obtain
\beq\label{aprio1}\int_\Om\Big(\pa_t\psi\mu+(B\nabla\mu|\nabla\mu)+(a|\nabla\mu)\pa_t\psi\Big)\
dx=\int_\Om \mu f\ dx+\int_\Ga\mu h_1\ d\Ga\eeq and
\beq\label{aprio2}\int_\Om\Big(-\pa_t\psi\mu+(c|\nabla\mu)\pa_t\psi+\beta
|\pa_t\psi|^2+\frac{1}{2}\frac{\pa}{\pa
t}|\nabla\psi|^2+\frac{\pa}{\pa t}\Phi(\psi)\Big)\
dx=\int_\Ga\pa_t\psi h_2\ d\Ga-\int_\Om \pa_t\psi g\ dx,\eeq since
$(a|\nu)=0$ on $\pa\Omega$. Adding \eqref{aprio1} and \eqref{aprio2}
yields the equation
\begin{multline}\label{eneq3}
\difft\left(\frac{1}{2}|\nabla\psi|_2^2+\int_\Om\Phi(\psi)\
dx\right)+\beta |\pa_t\psi|_2^2+(a+c|\pa_t\psi\nabla\mu)_2
+(B\nabla\mu|\nabla\mu)_2\\=\int_\Om \mu f\ dx+\int_\Ga\mu h_1\
d\Ga+\int_\Ga\pa_t\psi h_2\ d\Ga-\int_\Om \pa_t\psi g\
dx.\end{multline} From Assumption (H) with $z_0=\pa_t\psi$ and
$z_1=\nabla\mu$ it follows that
$$\beta |\pa_t\psi|_2^2+(a+c|\pa_t\psi\nabla\mu)_2
+(B\nabla\mu|\nabla\mu)_2\ge
\ep(|\pa_t\psi|_{2}^2+|\nabla\mu|_2^2).$$ For the first and the
second integral in \eqref{eneq3} we apply H\"older's inequality as
well as the Poincar\'{e}-Wirtinger inequality to obtain
$$\int_\Om\mu f\ dx\le C|f|_2\left(|\nabla\mu|_2+|\int_\Om\mu\
dx|\right)\quad \text{and}\quad \int_\Ga\mu h_1\ d\Ga\le
C|h_1|_{2,\Ga}\left(|\nabla\mu|_2+|\int_\Om\mu\ dx|\right).$$ The
integral $\int_\Om\mu\ dx$ can be computed in the following way.
Since $\diver c=0$ in $\Om$ and $(c|\nu)=0$ on $\Ga$ we have
$$\int_\Om (c|\nabla\mu)\ dx=\int_\Ga (c|\nu)\mu\
d\Ga-\int_\Om\mu\diver c\ dx=0,$$ hence it follows from
$\eqref{LWPCHG}_1$, $\eqref{LWPCHG}_2$ and the boundary conditions
that
\begin{equation*}\begin{split}\int_\Om\mu\ dx&=\beta\int_\Om\pa_t\psi\
dx+\int_\Om\Phi'(\psi)\ dx+\int_\Om g\
dx\\
&=\int_\Om\Phi'(\psi)\ dx+\int_\Om g\ dx+\beta\left(\int_\Om f\
dx+\int_\Ga h_1\ d\Ga\right).\end{split}\end{equation*} Assume in
addition
    \beq\label{growthPhi}\Phi(s)\ge
    -\frac{\eta}{2}s^2-c_0,\quad
    s\in\R,
    \eeq
where $c_0>0$ and $0<\eta<\la_1$, with $\la_1>0$ being the first
nontrivial eigenvalue of the negative Neumann Laplacian and
    \beq\label{AssPhi}|\Phi'(s)|\le (c_1\Phi(s)+c_2s^2+c_3)^\th,\quad
    \text{for all $s\in\R$},
    \eeq
and some constants $c_i>0,\ \th\in (0,1)$. This yields
$$|\int_\Om\mu\ dx|\le \int_\Om(c_1\Phi(\psi)+c_2|\psi|^2+c_3)^\th\
dx+c(|g|_{1}+|h_1|_{1,\Ga}+|f|_{1}).$$ By the last estimate, Young's
inequality and the Poincar\'{e} inequality it holds that
\begin{equation}\label{GWPCHG1}\begin{split}
\int_\Om \mu f\ dx+\int_\Ga \mu h_1\ d\Ga&\le
C(\de)\left(|\nabla\psi|_2^2+\int_\Om\Phi(\psi)\
dx+|f|_2^q+|h_1|_{2,\Ga}^q+|g|_{2}^2+1\right)+\de|\nabla\mu|_2^2,
\end{split}\end{equation}
where $q:=\max\{2,\frac{1}{1-\th}\}$ and $\de>0$ may be arbitrarily
small. For the term $\int_\Om\pa_t\psi g\ dx$ in \eqref{eneq3} we
apply Young's inequality one more time to obtain
\beq\label{GWPCHG2}\int_\Om\pa_t\psi g\ dx\le
\de|\pa_t\psi|_{2}^2+C(\de)|g|_2^2.\eeq Integrating \eqref{eneq3}
with respect to $t$ and choosing $\de>0$ small enough, we obtain
together with \eqref{GWPCHG1} and \eqref{GWPCHG2} the estimate
\begin{multline}\label{enineq3}
\frac{1}{2}|\nabla\psi(t)|_2^2+\int_\Om\Phi(\psi(t))\ dx+C_1
(|\pa_t\psi|_{2,2}^2+|\nabla\mu|_{2,2}^2)\\ \le
C_2\left(\int_0^t\left(\frac{1}{2}|\nabla\psi(\tau)|_2^2+\Phi(\psi(\tau))\right)\
d\tau+|f|_{q,2}^q+|h_1|_{q,2,\Ga}^q+|g|_{2,2}^2+1\right)\\+\int_0^t\int_\Ga\pa_t\psi
h_2\ d\Ga\ d\tau.\end{multline} In order to treat the last double
integral, we have to assume more regularity for the function $h_2$.
To be precise, we assume that $$h_2\in H_p^1(J;L_p(\Ga))\cap
L_p(J;W_p^{2-1/p}(\Ga))\hookrightarrow C(J;L_p(\Ga)).$$ Due to this
fact, we may integrate the last term in \eqref{enineq3} by parts to
the result \beq\label{GWPCHG2a}\int_0^t\int_\Ga\pa_t\psi h_2\ d\Ga\
d\tau=\int_\Ga\psi(t)h_2(t)\ d\Ga-\int_\Ga\psi_0h_2|_{t=0}\
d\Ga-\int_0^t\int_\Ga\psi\pa_th_2\ d\Ga\ d\tau,\eeq where we also
made use of Fubini's theorem. For the first term we use Young's
inequality, the embedding $H_2^1(\Om)\hookrightarrow L_2(\Ga)$ and
the fact that \beq\label{conspsi}\int_\Om\psi(t)\ dx=\int_\Om\psi_0\
dx+\int_0^t\int_\Om f\ dx\ d\tau+\int_0^t\int_\Ga h_1\ d\Ga\
d\tau.\eeq This yields
\begin{equation*}\begin{split}\int_\Ga\psi(t)h_2(t)\ d\Ga&\le\de|\psi(t)|_{H_2^1(\Om)}^2+C(\de)|h_2(t)|_{2,\Ga}^2\\
&\le \delta
C|\nabla\psi(t)|_{2}^2+C(\delta)\left(|h_2|_{\infty,2,\Ga}^2+|f|_{1,1}+|h_1|_{1,1,\Ga}+|\psi_0|_1\right).\end{split}\end{equation*}
Observe that we have $h_2|_{t=0}=\pa_\nu\psi_0\in
B_{pp}^{2-3/p}(\Ga)\hookrightarrow L_2(\Ga)$ and, by trace
theory,
$$B_{pp}^{3-2/p}(\Om)\hookrightarrow
B_{pp}^{3-3/p}(\Ga)\hookrightarrow L_2(\Ga).$$ It follows that the
integral $\int_\Ga\psi_0 h_2|_{t=0}\ d\Ga$ converges. Finally,
concerning the last term in \eqref{GWPCHG2a} we apply Young's
inequality one more time to the result
\begin{align*}\int_0^t\int_\Ga\psi\pa_th_2\ d\Ga\
d\tau&\le\frac{1}{2}\int_0^t|\psi(\tau)|_{H_2^1(\Om)}^2\
d\tau+\frac{1}{2}|\pa_th_2|_{2,2,\Ga}^2\\
&\le C\int_0^t|\nabla\psi(\tau)|_{2}^2\
d\tau+C(T,f,h_1,\pa_th_2,\psi_0),
\end{align*}
where we used again \eqref{conspsi}. Set
$$E(u)=\frac{1}{2}|\nabla u|_2^2+\int_\Om\Phi(u)\ dx,\quad u\in
H_2^1(\Om).$$ Then by the above estimates there exist some constants
$C_j>0$ such that
$$E(\psi(t))+C_1
(|\pa_t\psi|_{2,2}^2+|\nabla\mu|_{2,2}^2)\le C_2\int_0^t
E(\psi(\tau))\ d\tau+C_3(T,f,g,h_1,h_2,\pa_th_2,\psi_0),$$
for all $t\in [0,T]$, provided that $\de>0$ is sufficiently small. With the help of
\eqref{growthPhi} it follows that $E(u)$ is bounded from below for
all $u\in H_2^1(\Om)$, hence we may apply Gronwall's lemma to the
result that $E(\psi(\cdot))$ is bounded on $J_{\max}=[0,T_{\max})$.
Applying \eqref{growthPhi} one more time and using the fact that
$|\int_\Om\psi(t,x)\ dx|\le C$ it holds that
$$\psi\in L_\infty(J_{\max};H_2^1(\Om)).$$
Note that in the semilinear case the following estimate for the maximal solution $(\psi,\mu)$ of \eqref{LWPCHG} holds
\begin{multline}\label{GWP2}
|\psi|_{Z^1(T)}+|\mu|_{Z^2(T)}\\ \le C\left(|\Phi'(\psi)|_{X^2(T)}+|f|_{X^1(T)}+|g|_{X^2(T)}+|h_1|_{Y^1(T)}
+|h_2|_{Y^2(T)}+|\psi_0|_{X_p}\right).
\end{multline}
Here the constant $C>0$ does not depend on $T\in (0, T_{\max})$. Suppose that $\Phi'(\psi)$ satisfies the estimate
\begin{equation}\label{GWP3}
|\Phi'(\psi)|_{X^2(T)}\le C(T)|\psi|_{Z^1(T)}^\kappa|\psi|_{L_\infty(0,T_{\max};H_2^1(\Omega))}^m,
\end{equation}
for some $\kappa\in (0,1)$ and $m>0$, where $C(T)>0$ and $\sup_{T\in[0,T_{\max})}C(T)<\infty$. Substituting \eqref{GWP3} into \eqref{GWP2} yields
$$|\psi|_{Z^1(T)}\le M\left(1+|\psi|_{Z^1(T)}^\kappa\right),$$
where $M>0$ does not depend on $T\in (0, T_{\max})$.This in turn yields that $|\psi|_{Z^1(T_{\max})}$ is bounded, since $\kappa\in (0,1)$. Therefore $\psi(T_{\max})\in B_{pp}^{3-2/p}(\Omega)$ is well-defined and we may continue the maximal solution $(\psi,\mu)$ beyond the point $T_{\max}$, which is a contradiction to the maximality of $T_{\max}$.

It remains to show the validity of \eqref{GWP3}. We start with the term $\nabla\Phi'(\psi)=\Phi''(\psi)\nabla\psi$ in $L_p(\Omega)^n$. It holds that
$$|\Phi''(\psi)\nabla\psi|_p\le
|\Phi''(\psi)|_{3p/2}|\nabla\psi|_{3p},$$ by H\"older's
inequality. Assume that there exists a constant $C>0$ such that
\begin{equation}
\label{growthPhi2}|\Phi''(s)|\le C(1+|s|^\alpha),
\end{equation}
for all $s\in\R$ and some $\al\ge 1$, where $\al<4$ in case $n=3$.
Then we have
$$|\Phi''(\psi)\nabla\psi|_p\le C(1+|\psi|_{3\alpha
p/2}^\alpha)|\nabla\psi|_{3p}.$$ Applying the Gagliardo-Nirenberg
interpolation inequality we obtain
$$|\psi|_{3\alpha p/2}\le C|\psi|_{H_p^3(\Omega)}^a|\psi|_{q}^{1-a},$$
provided
$$\frac{n}{q}-\frac{2n}{3\alpha
p}=a\left(3-\frac{n}{p}+\frac{n}{q}\right),\ a\in [0,1].$$ On the
other side we obtain
$$|\nabla\psi|_{3p}\le C|\psi|_{H_p^3(\Omega)}^b|\psi|_{q}^{1-b},$$
provided
$$1-\frac{n}{3p}+\frac{n}{q}=
b\left(3-\frac{n}{p}+\frac{n}{q}\right),\ b\in [1/3,1].$$ Chose $q$
in such a way, that $H_2^1(\Om)\hookrightarrow L_q(\Om)$, i.e.
$n/q\ge n/2-1$. Thus $q$ may be arbitrarily large if $n\in\{1,2\}$
and $q\le 6 $ in case $n=3$.
If $n=3$, let
    \begin{equation}\label{GWP0.1}\frac{\alpha n}{2}<q<\min\left\{6,\frac{3\alpha
    p}{2}\right\},
    \end{equation}
while in case $n=1,2$ we require
    \begin{equation}\label{GWP0.2}
    \frac{\alpha n}{2}<q<\frac{3\alpha p}{2}.
    \end{equation}
This is possible, since $n<3p$ for $n\le 3$ and $\alpha
n/2<6$ if $n=3$, since in this case we assume $\alpha<4$. Now it
follows that
\begin{equation}\label{GWP1}|\psi|_{3\alpha p/2}^\alpha|\nabla\psi|_{3p}\le
C|\psi|_{H_p^3(\Om)}^{a\alpha+b}|\psi|_{q}^{1+\alpha(1-a)-b}.\end{equation}
To gain something from this inequality we require $a\alpha+b<1$
which is equivalent to
$$\left(3-\frac{n}{p}+\frac{n}{q}\right)>\alpha\left(\frac{n}{q}-\frac{2n}{3\alpha
p}\right)+1-\frac{n}{3p}+\frac{n}{q}=1-\frac{n}{p}+(1+\alpha)\frac{n}{q}.$$
This in turn yields $\al<2q/n$ which is certainly true by
\eqref{GWP0.1} and \eqref{GWP0.2}. With $\kappa:=a\alpha+b\in (0,1)$ we obtain the estimate
$$|\nabla\Phi'(\psi(t))|_{L_p(\Omega)^n}\le C|\psi(t)|_{H_p^3(\Omega)}^{\kappa}|\psi(t)|_{H_2^1(\Omega)}^{m},$$
valid for a.e.\ $t\in [0,T]\subset[0,T_{\max})$ and some $m>0$. Similarly one obtains
$$|\Phi'(\psi(t))|_{L_p(\Omega)^n}\le C|\psi(t)|_{H_p^3(\Omega)}^\kappa|\psi(t)|_{H_2^1(\Omega)}^{m},$$
for a.e.\ $t\in [0,T]\subset[0,T_{\max})$. Finally this yields
\begin{equation}\label{GWP4}
|\Phi'(\psi(t))|_{H_p^1(\Omega)}\le C|\psi(t)|_{H_p^3(\Omega)}^\kappa|\psi(t)|_{H_2^1(\Omega)}^{m},
\end{equation}
for a.e.\ $t\in [0,T]\subset[0,T_{\max})$. Integration of the $p$-th power of \eqref{GWP4} and H\"{o}lder's inequality imply \eqref{GWP3}.

In conclusion we have the following result.
\begin{thm}\label{GWPthm}
Let $p>n+2$, $n\le 3$, $q=\max\{2,\frac{1}{1-\th}\}$, with $\th$
from \eqref{AssPhi}. Suppose that $a,c\in
C^1(\overline{\Om})^n$ and $b\in
C^1(\overline{\Om})$ satisfy condition (H) as well as $\diver a(x)=\diver c(x)=0$, $x\in \Omega$ and $(a(x)|\nu(x))=(c(x)|\nu(x))=0$, $x\in\partial\Omega$. Assume furthermore that
$\Phi\in C^{3-}(\mathbb{R})$ satisfies \eqref{growthPhi}, \eqref{AssPhi} and
\eqref{growthPhi2}. Then there exists a unique global solution
$(\psi,\mu)$ of \eqref{LWPCHG} on $J_0=[0,T_0]$, with
$$\psi\in H_p^1(J_0;H_p^1(\Om))\cap L_p(J_0;H_p^3(\Om))$$
and
$$\mu\in L_p(J_0;H_p^2(\Om)),$$
provided that the data are subject to the following conditions.
\begin{enumerate}
\item $f\in L_p(J_0;L_p(\Om))\cap L_q(J_0;L_2(\Om))$,
\item $g\in L_p(J_0;H_p^1(\Om))$,
\item $h_1\in L_p(J_0;W_p^{1-1/p}(\Ga))\cap L_q(J_0;L_2(\Ga))$,
\item $h_2\in H_p^1(J_0;L_p(\Ga))\cap
L_p(J_0;W_p^{2-1/p}(\Ga))$,
\item $\psi_0\in B_{pp}^{3-2/p}(\Om)$,
\item $\pa_\nu\psi_0=h_2|_{t=0}$.
\end{enumerate} The solution depends continuously on the given data and if $f=g=h_1=h_2=0$, the map $\psi_0\mapsto \psi(t),\
t\in\R_+$, defines a global semiflow on the natural phase manifold
defined by (v) \& (vi).
\end{thm}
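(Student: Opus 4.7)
The plan is to combine local well-posedness with a uniform a priori bound on $|\psi|_{Z^1(T)}$ over $T<T_{\max}$ in order to exclude finite-time blow-up. Local well-posedness (Theorem~\ref{LWPthm}, together with Remark~\ref{remlwp} for the semilinear case) yields a maximal existence interval $J_{\max}=[0,T_{\max})$. Arguing by contradiction, I assume $T_{\max}<\infty$; if one can show that $|\psi|_{Z^1(T)}+|\mu|_{Z^2(T)}$ stays bounded as $T\uparrow T_{\max}$, then $\psi(T_{\max})\in B_{pp}^{3-2/p}(\Omega)$ is well defined and local well-posedness extends the solution past $T_{\max}$, contradicting maximality.

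The first step is the basic energy estimate obtained by testing $\eqref{LWPCHG}_1$ against $\mu$ and $\eqref{LWPCHG}_2$ against $-\pa_t\psi$. The divergence conditions $\diver a=\diver c=0$ in $\Omega$ and $(a|\nu)=(c|\nu)=0$ on $\Gamma$ make the awkward boundary terms vanish, and hypothesis (H) provides coercivity of the resulting quadratic form in $(\pa_t\psi,\nabla\mu)$. To control $\int_\Omega\mu f$ and $\int_\Gamma\mu h_1$, the mean of $\mu$ is extracted algebraically from the equations themselves and estimated using the subgrowth assumption \eqref{AssPhi} together with the Poincar\'e--Wirtinger inequality. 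The boundary term $\int_0^t\!\int_\Gamma\pa_t\psi\,h_2$ is moved to the left by integration by parts in time, which uses the extra regularity $h_2\in H_p^1(J;L_p(\Gamma))$ and the conservation identity for the spatial mean of $\psi$. After the usual absorption of small terms and an application of Gronwall's lemma, the coercivity \eqref{growthPhi} of $\Phi$ from below yields
$$\psi\in L_\infty(J_{\max};H_2^1(\Omega)).$$

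The second step upgrades this $H^1$-bound to a bound on the full norm $|\psi|_{Z^1(T)}$ by means of the linear maximal regularity estimate \eqref{GWP2}. The nonlinearity $\Phi'(\psi)$ must be controlled in $X^2=L_p(J;H_p^1(\Omega))$, the critical term being $\Phi''(\psi)\nabla\psi$. Using \eqref{growthPhi2} and H\"older's inequality one obtains $|\Phi''(\psi)\nabla\psi|_p\le C(1+|\psi|_{3\alpha p/2}^\alpha)|\nabla\psi|_{3p}$, and the Gagliardo--Nirenberg inequality interpolates both factors between $H_p^3(\Omega)$ and $L_q(\Omega)$ for a suitable $q$ satisfying $H_2^1(\Omega)\hookrightarrow L_q(\Omega)$. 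The assumption $n\le 3$, with the additional restriction $\alpha<4$ when $n=3$, is precisely what permits the interpolation exponents $a,b$ to be chosen so that $\kappa:=a\alpha+b<1$; this produces an estimate of the form $|\Phi'(\psi)|_{X^2(T)}\le C|\psi|_{Z^1(T)}^{\kappa}|\psi|_{L_\infty(H_2^1)}^{m}$ with uniform constant. Inserting this into \eqref{GWP2} gives an inequality of the form $|\psi|_{Z^1(T)}\le M(1+|\psi|_{Z^1(T)}^{\kappa})$ with $M$ independent of $T\in(0,T_{\max})$; since $\kappa<1$, Young's inequality closes the estimate and yields the desired uniform bound, which in turn controls $|\mu|_{Z^2(T)}$ via \eqref{GWP2}.

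Continuous dependence on the data is inherited from the contraction mapping argument underlying Theorem~\ref{LWPthm}, propagated to the whole of $J_0$ by concatenation using the a priori bound just established. The semiflow property in the autonomous homogeneous case $f=g=h_1=h_2=0$ then follows from uniqueness and time-translation invariance. I expect the decisive technical point to be the Gagliardo--Nirenberg calibration in the second step: one must verify simultaneously the admissibility range of $q$ coming from $H_2^1\hookrightarrow L_q$, the endpoint constraints $a\in[0,1]$, $b\in[1/3,1]$, and the strict inequality $a\alpha+b<1$; it is exactly here that the dimensional restriction $n\le 3$ and the subcritical growth condition $\alpha<4$ enter in an essential way.
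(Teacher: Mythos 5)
Your proposal follows the paper's proof essentially verbatim: the same two-stage argument (energy identity plus Gronwall to obtain the uniform $L_\infty(J_{\max};H_2^1(\Omega))$ bound, then the maximal-regularity inequality \eqref{GWP2} combined with the Gagliardo--Nirenberg calibration $\kappa=a\alpha+b<1$ to bound $|\psi|_{Z^1(T)}$ and continue past $T_{\max}$), including the integration by parts in time for the $h_2$ boundary term, the extraction of $\int_\Omega\mu\,dx$ via \eqref{AssPhi}, and the closing of the final sublinear inequality. No substantive deviation from the paper's route.
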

\begin{rem}
The assertion of Theorem \ref{GWPthm} remains true if we assume that $p>(n+2)/3$, which is sufficient for the well-posedness of the semilinear model by Remark \ref{remlwp}.
\end{rem}

\section{Asymptotic Behavior}

In this last section we will give a qualitative analysis of global
solutions of the Cahn-Hilliard-Gurtin system
\beq\label{ASCHG}\begin{split}
\pa_t\psi-\diver (a \pa_t\psi)&=\diver(b\nabla \mu),\quad t>0,\ x\in\Om,\\
\mu-c\cdot\nabla\mu&=\beta \pa_t\psi-\De \psi+\Phi'(\psi),\quad t>0,\ x\in\Om,\\
B\nabla\mu\cdot\nu&=0,\quad t>0,\ x\in\Ga,\\
\pa_\nu \psi&=0,\quad t>0,\ x\in \Ga,\\
\psi(0)&=\psi_0,\quad t=0,\ x\in\Om. \end{split}\eeq To be more
precise we will show that each trajectory converges to a
stationary point, i.e. to a solution of the corresponding
stationary system. The so called \emph{Lojasiewicz-Simon
inequality} will play an important role in the proof of this
assertion. Assume that $a,c\in C^1(\overline{\Omega})^n$ and $b\in C^1(\overline{\Omega})$
with $\diver a(x)=\diver c(x)=0,\ x\in\Omega$ and $(a(x)|\nu(x))=(c(x)|\nu(x))=0$, $x\in\partial\Omega$. Suppose that the data $(\beta,a,c,B)$ satisfy
condition (H) for all $x\in\overline{\Omega}$. Moreover we assume that $\Phi\in C^3(\mathbb{R})$ and that it satisfies the estimate
\begin{equation}\label{growthPhi3}
|\Phi'''(s)|\le C(1+|s|^\gamma),\ \text{for all}\ s\in\mathbb{R},
\end{equation}
and some constant $C>0$. Here $\gamma\ge 1$ is arbitrary if $n\in\{1,2\}$ and $\gamma<3$ if $n=3$. At this point we want to remark that \eqref{growthPhi3} already implies \eqref{growthPhi2}.

Let $\psi_0\in H_2^2(\Omega)$ such that $\partial_\nu\psi_0=0$
and let $(\psi,\mu)$ be the unique global solution of
\eqref{ASCHG}. We recall from Section \ref{GWPCHGSEC} the energy
functional
$$E(u)=\frac{1}{2}|\nabla u|_2^2+\int_\Om\Phi(u)\ dx,$$
defined on the energy space
$$V:=\{u\in H_2^1(\Om):\int_\Om u\ dx=0\}.$$
Note that due to $\eqref{ASCHG}_1$ and the boundary condition
$\eqref{ASCHG}_3$ we obtain $\int_\Om\psi\ dx\equiv\int_\Om\psi_0\
dx$, since $(a(x)|\nu(x))=0$ on $\Ga$. If we perform a shift of $\psi$
by means of $\tilde{\psi}=\psi-c$, where $c:=\int_\Om\psi_0\ dx$,
it follows that $\tilde{\psi}$ is again a solution of
\eqref{ASCHG}, provided that the physical potential $\Phi$ is replaced by
$\tilde{\Phi}(s)=\Phi(s+c)$. Additionally it holds that
$\int_\Om\tilde{\psi}\ dx=0$. It follows from \eqref{eneq3} that $E(\psi(\cdot))$ satisfies the equation
$$\difft E(\psi(t))+\beta |\pa_t\psi(t)|_2^2+(a+c|\pa_t\psi(t)\nabla\mu(t))_2
+(B\nabla\mu(t)|\nabla\mu(t))_2=0,$$ for all $t\in\R_+$. Making
again use of Hypothesis (H) we obtain the inequality
\beq\label{enineqCHG2}\difft
E(\psi(t))+\ep\left(|\pa_t\psi(t)|_2^2+|\nabla\mu(t)|_2^2\right)\le
0,\eeq which holds for all $t\in\R_+$. Integrating with respect to
$t$ and making use of \eqref{growthPhi} as well as of the
Poincar\'{e} inequality we obtain the a priori estimates
$$\psi\in L_\infty(\R_+;H_2^1(\Om))\quad\text{and}\quad \pa_t\psi,|\nabla\mu|\in
L_2(\R_+\times\Om).$$
\begin{pro}\label{relcompCHG} The orbit
$\{\psi(t)\}_{t\in\R_+}$ is relatively compact in $V$.
\end{pro}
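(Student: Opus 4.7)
My plan is to view \eqref{ASCHG} as an abstract semilinear parabolic evolution equation and exploit the smoothing of the associated analytic semigroup together with the a priori bound $\psi \in L_\infty(\R_+; H_2^1(\Omega))$.

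After the mean-value shift that reduces to $\int_\Omega \psi(t)\,dx = 0$, I would eliminate $\mu$ from \eqref{ASCHG}: solving the second equation yields $\mu = (I - c\cdot\nabla)^{-1}[\beta\partial_t\psi - \Delta\psi + \Phi'(\psi)]$, the bounded invertibility of $I - c\cdot\nabla$ being a consequence of Hypothesis (H) together with $\diver c = 0$ and $(c|\nu) = 0$. Substitution into the first equation, combined with the identity $\diver(a\partial_t\psi) = a\cdot\nabla\partial_t\psi$, produces an abstract Cauchy problem
\begin{equation*}
\dot\psi + \mathcal{A}\psi = F(\psi),\qquad t > 0,
\end{equation*}
on a suitable mean-zero base space $X$. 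The linear operator $\mathcal{A}$ is a fourth-order elliptic operator with Neumann-type boundary conditions; under Hypothesis (H) it is sectorial on $X$ (this is essentially the $L_2$-instance of Theorem \ref{lin}) and has compact resolvent. The nonlinearity $F(\psi)$ gathers the contribution of $\Phi'(\psi)$; using the growth assumption \eqref{growthPhi3} (which implies \eqref{growthPhi2}) and the Sobolev embedding $H_2^1(\Omega) \hookrightarrow L_q(\Omega)$ (any $q < \infty$ if $n \le 2$, $q \le 6$ if $n = 3$), the a priori $H_2^1$-bound yields $\sup_{t \ge 0} \|F(\psi(t))\|_X < \infty$.

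Since $\mathcal{A}$ generates an analytic semigroup $e^{-\mathcal{A} t}$ on $X$, combining the standard smoothing bound $\|\mathcal{A}^\alpha e^{-\mathcal{A} t}\|_{\calB(X)} \le C t^{-\alpha}$ for $\alpha \in [0, 1)$ with the variation-of-constants formula
\begin{equation*}
\psi(t) = e^{-\mathcal{A} t}\psi_0 + \int_0^t e^{-\mathcal{A}(t-s)} F(\psi(s))\,ds
\end{equation*}
gives $\sup_{t \ge 1} \|\mathcal{A}^\alpha \psi(t)\|_X < \infty$ for every $\alpha \in (0, 1)$. Choosing $\alpha$ close enough to $1$ that $D(\mathcal{A}^\alpha) \hookrightarrow H_2^{1+\epsilon}(\Omega)$ for some $\epsilon > 0$, the Rellich--Kondrachov theorem implies $\{\psi(t)\}_{t \ge 1}$ lies in a compact subset of $V$. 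Combined with continuity of $t \mapsto \psi(t)$ on $[0, 1]$, this yields relative compactness of $\{\psi(t)\}_{t \ge 0}$ in $V$.

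The main obstacle is the construction of the sectorial operator $\mathcal{A}$ on the appropriate base space: the elimination of $\mu$ introduces the non-symmetric first-order operators $a\cdot\nabla$ and $c\cdot\nabla$, whose compatibility requires Hypothesis (H); equivalently, one must recast the $L_p$-maximal regularity statement of Theorem \ref{lin} as analyticity of a semigroup on a suitable mean-zero space. This is precisely the semigroup-theoretic input announced in the introduction, and once it is in place, the rest of the argument is a standard consequence of parabolic smoothing and compact Sobolev embedding.
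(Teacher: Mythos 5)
Your proposal departs substantially from the paper's proof, and the route you chose runs into difficulties that the paper is specifically designed to avoid.

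The paper does not eliminate $\mu$. Instead, it reads the second equation $\eqref{ASCHG}_2$ backwards: it treats $\beta\pa_t\psi-\De\psi+\psi=\text{RHS}$ as a second-order linear heat equation for $\psi$ with Neumann boundary conditions, and it moves everything else to the right-hand side, which it then splits into two pieces. The first piece, $f:=\mu-\overline{\mu}-(c|\nabla\mu)$, lies in $L_2(\R_+;L_2(\Om))$ because the dissipation inequality \eqref{enineqCHG2} gives $\nabla\mu\in L_2(\R_+\times\Om)$ and Poincar\'e--Wirtinger controls $\mu-\overline{\mu}$ by $\nabla\mu$. Convolving the exponentially stable heat semigroup $T(\cdot)$ against such an $f$ gives a contribution in $C_0(\R_+;H_2^1(\Om))$, hence relatively compact. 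The second piece, $g:=\overline{\mu}+\psi-\Phi'(\psi)$, lies only in $L_\infty(\R_+;L_q(\Om))$ with $q=6/(\gamma+2)$, which may well be \emph{less than} $2$; the convolution $T(\cdot)\ast g$ then stays bounded in $H_q^s(\Om)$ for $s<2$, and the compact embedding $H_q^s(\Om)\hookrightarrow\hookrightarrow H_2^1(\Om)$ (for $s$ close to $2$, using $q>6/5$) finishes the argument. The paper's split is crucial: it exploits the \emph{time-integrated} dissipation estimate for the $\mu$-part and the \emph{pointwise-in-time} $H_2^1$ bound only for the nonlinear part, each in the natural Lebesgue exponent.

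Your approach has several concrete gaps. First, inverting $I-c\cdot\nabla$ on a Lebesgue space is not a routine matter: this is a non-elliptic first-order operator, and even when $c\cdot\nabla$ is skew-adjoint (which holds here thanks to $\diver c=0$ and $(c|\nu)=0$) the resulting resolvent is a genuinely nonlocal operator whose action on Sobolev scales would have to be analyzed; the paper deliberately avoids this. The same issue recurs on the $\psi$-equation: since $\diver(a\pa_t\psi)=a\cdot\nabla\pa_t\psi$, the first equation reads $(I-a\cdot\nabla)\pa_t\psi=\diver(b\nabla\mu)$, so you would also need to invert $I-a\cdot\nabla$ to reach the Cauchy form $\dot\psi+\calA\psi=F(\psi)$. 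Second, and more seriously, the uniform bound $\sup_{t\ge0}\|F(\psi(t))\|_X<\infty$ is not available on the natural base space. If $X$ is (mean-zero) $L_2(\Om)$, then $\Phi'(\psi)$ must be in $L_2$ uniformly in $t$; but in $n=3$ the growth condition \eqref{growthPhi3} with $\gamma<3$ only gives $|\Phi'(s)|\lesssim 1+|s|^{\gamma+2}$, and $H_2^1(\Om)\hookrightarrow L_6(\Om)$ only yields $\Phi'(\psi)\in L_{6/(\gamma+2)}$, which is strictly smaller than $L_2$ whenever $\gamma>1$. This is precisely why the paper works in $L_q$ with $q=6/(\gamma+2)$ rather than in $L_2$. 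Your variation-of-constants argument would need to be recast in a base space like $L_q$ from the start, at which point the desired fractional-domain embedding $D(\calA^\alpha)\hookrightarrow H_2^{1+\epsilon}(\Om)$ would have to be re-examined and is no longer obviously true.
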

\bpr We rewrite equation $\eqref{ASCHG}_2$ as follows
$$\beta\pa_t\psi-\De\psi+\psi=\mu-\overline{\mu}-(c(x)|\nabla\mu)+\overline{\mu}+\psi-\Phi'(\psi),$$
where $\overline{\mu}=\frac{1}{|\Om|}\int_\Om\Phi'(\psi)\ dx$. By
the energy estimates above and the Poincar\'{e}-Wirtinger
inequality it holds that
$$f:=\mu-\overline{\mu}+(c|\nabla\mu)\in L_2(\R_+;L_2(\Om)).$$
Furthermore we have
$$g:=\overline{\mu}+\psi-\Phi'(\psi)\in L_\infty(\R_+;L_q(\Om)),$$
where $q=6/(\ga+2)$ is determined by the growth condition
\eqref{growthPhi3} on $\Phi$. The operator $A:=\De-I$ with
domain
$$D(A)=\{u\in H_p^2(\Om):\pa_\nu u=0\ \text{on}\ \Ga\}$$
generates an exponentially stable, analytic $C_0$-semigroup
$\{T(t)\}_{t\in\R_+}$ in $L_p(\Om)$. Therefore
$$T(\cdot)\ast f\in H_2^1(\R_+;L_2(\Om))\cap
L_2(\R_+;H_2^2(\Om))\hookrightarrow C_0(\R_+;H_2^1(\Om)).$$ For
the function $g$ we apply elementary semigroup theory to obtain
$$T(\cdot)\ast g\in C_b(\R_+;H_q^s(\Om)),$$
for each $s\in (0,2)$. The space $H_q^s(\Om)$ embeds compactly into
$H_2^1(\Om)$, if $s$ is chosen close enough to 2. This completes the
proof of relative compactness, since $T(\cdot)\psi_0\in C_0(\mathbb{R}_+;H_2^1(\Omega))$.

\epr \noindent The following proposition provides some
properties of the $\om$-limit set
$$\om(\psi)=\{\ph\in V:\ \exists\ (t_n)\nearrow\infty,\ s.t.\
\psi(t_n)\to \ph\ in\ V\}.$$
\begin{pro}\label{omlimset}
Suppose that $(\psi,\mu)$ is a global solution of \eqref{ASCHG}
and let $\Phi$ satisfy Hypotheses \eqref{growthPhi} and
\eqref{growthPhi3}. Then the following statements hold.
\begin{enumerate}
\item The mapping $t\mapsto E(\psi(t))$ is nonincreasing and the
limit $\lim_{t\to\infty}E(\psi(t))=:E_\infty\in\R$ exists.
\item The $\om$-limit set $\om(\psi)\subset V$ is nonempty, connected, compact and $E$ is
constant on $\om(\psi)$.
\item Every $\psi_\infty\in\om(\psi)$ is a strong solution (in the sense of $L_2$) of the
stationary problem
\begin{equation}\label{statsysCHG}\begin{split}
-\De\psi_\infty+\Phi'(\psi_\infty)&=\mu_\infty,\quad x\in\Om,\\
\pa_\nu\psi_\infty&=0,\quad x\in\Ga,
\end{split}\end{equation}
where $\mu_\infty=\frac{1}{|\Om|}\int_\Om\Phi'(\psi_\infty)\
dx=const$.
\item Each $\psi_\infty\in\om(\psi)$ is a critical point of $E$,
i.e. $E'(\psi_\infty)=0$ in $V^*$, where $V^*$ is the topological
dual space of $V$.
\end{enumerate}
\end{pro}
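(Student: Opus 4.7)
My plan is to handle the four assertions in order, leveraging Proposition \ref{relcompCHG} and the integrated form of the dissipation inequality \eqref{enineqCHG2}. First, (i) is immediate: \eqref{enineqCHG2} shows that $t\mapsto E(\psi(t))$ is nonincreasing, and the coercivity bound $E(u)\ge -(\eta/2)|u|_2^2 - c_0|\Omega|$ from \eqref{growthPhi}, together with $\psi\in L_\infty(\mathbb{R}_+;H_2^1(\Omega))$, yields a finite lower bound, so the monotone limit $E_\infty$ exists.

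For (ii), the standard dynamical-systems argument applies: $\omega(\psi)=\bigcap_{t\ge 0}\overline{\{\psi(s):s\ge t\}}^V$ is the intersection of a decreasing family of nonempty compact connected subsets of $V$ (compactness via Proposition \ref{relcompCHG}, connectedness since each tail is the closure of the continuous image of $[t,\infty)$), hence nonempty, compact, and connected. The growth bound \eqref{growthPhi3} integrates to $|\Phi(s)|\le C(1+|s|^{\gamma+3})$ with $\gamma+3\le 6$, so the Sobolev embedding $H_2^1\hookrightarrow L^{\gamma+3}$ (valid for $n\le 3$) makes $E:V\to\mathbb{R}$ continuous; thus $\psi(t_n)\to \psi_\infty$ in $V$ gives $E(\psi_\infty)=\lim E(\psi(t_n))=E_\infty$.

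The heart of the argument is (iii). Integrating \eqref{enineqCHG2} over $\mathbb{R}_+$ yields $\partial_t\psi,\nabla\mu\in L_2(\mathbb{R}_+;L_2(\Omega))$, so for any $t_n\to\infty$ I have $\int_{t_n}^{t_n+1}(|\partial_t\psi|_2^2+|\nabla\mu|_2^2)\,ds\to 0$. The estimate $|\psi(t_n+s)-\psi(t_n)|_2\le\sqrt{s}\big(\int_{t_n}^{t_n+s}|\partial_r\psi|_2^2\,dr\big)^{1/2}$ gives $\psi(t_n+s)\to\psi_\infty$ in $L_2$ uniformly in $s\in[0,1]$. Since the family $\{\psi(t_n+s)\}$ is $V$-relatively compact, every subsequence has a further subsequence converging in $V$, and the unique $L_2$-limit forces that limit to be $\psi_\infty$, so the convergence is in fact uniform in $V$. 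Now I test $\eqref{ASCHG}_2$ against an arbitrary $\phi\in V$ and average over $[t_n,t_n+1]$. The term $\beta\int\int\partial_t\psi\,\phi$ equals $\beta\int_\Omega(\psi(t_n{+}1)-\psi(t_n))\phi\to 0$; the term $\int\int(c\cdot\nabla\mu)\phi$ vanishes by Cauchy--Schwarz and $\int|\nabla\mu|_2^2\,ds\to 0$; for $\int\int\mu\phi$ I decompose $\mu=(\mu-\overline\mu)+\overline\mu$, controlling the first piece by Poincar\'e--Wirtinger and identifying $\overline\mu(t)=|\Omega|^{-1}\int\Phi'(\psi(t))\,dx$, which by uniform $V$-convergence and the growth of $\Phi'$ tends to $\mu_\infty:=|\Omega|^{-1}\int\Phi'(\psi_\infty)\,dx$; finally the terms $\int\int\nabla\psi\cdot\nabla\phi$ and $\int\int\Phi'(\psi)\phi$ pass to the limit by the uniform $V$-convergence combined with the Sobolev--growth estimate on $\Phi'$. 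This yields the weak formulation of $-\Delta\psi_\infty+\Phi'(\psi_\infty)=\mu_\infty$ with natural boundary condition $\partial_\nu\psi_\infty=0$; standard elliptic regularity upgrades this to a strong $L_2$-solution. For (iv), the Fr\'echet derivative acts by $\langle E'(u),h\rangle=\int_\Omega(\nabla u\cdot\nabla h+\Phi'(u)h)\,dx$ for $h\in V$; testing \eqref{statsysCHG} against $h\in V$, integrating by parts, and using $\partial_\nu\psi_\infty=0$ together with $\int_\Omega h\,dx=0$ gives $\langle E'(\psi_\infty),h\rangle=\mu_\infty\int_\Omega h\,dx=0$.

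The main obstacle I anticipate is the uniform-in-$s$ upgrade from $L_2$- to $V$-convergence of the shifted trajectories, which is what makes the limit passage in the nonlinear term $\Phi'(\psi)$ and in $\overline\mu$ legitimate; this relies essentially on Proposition \ref{relcompCHG} and the growth condition \eqref{growthPhi3}. A secondary technical point is ensuring that the non-classical drift terms $\diver(a\partial_t\psi)$ and $c\cdot\nabla\mu$ genuinely vanish in the limit, which works because both are controlled by the same $L_2(\mathbb{R}_+;L_2)$-quantities that drive the dissipation.
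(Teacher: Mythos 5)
Your proof is correct, and in parts (i), (ii), and (iv) it follows essentially the same reasoning as the paper, merely filling in details the paper leaves implicit (the nested-intersection description of $\om(\psi)$, the continuity of $E$ on $V$ via the $H_2^1\hookrightarrow L^{\gamma+3}$ embedding, the explicit computation $\langle E'(\psi_\infty),h\rangle=\mu_\infty\int_\Om h\,dx=0$).

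Where you genuinely diverge from the paper is in (iii). The paper fixes a ``good time'': integrating \eqref{enineqCHG2} over $[t_n,t_n+1]$ shows $\pa_t\psi(t_n+\cdot)$ and $\nabla\mu(t_n+\cdot)$ vanish in $L_2([0,1]\times\Om)$, so one extracts a subsequence $t_{n_k}$ and a single $s^*\in[0,1]$ at which both tend to $0$ in $L_2(\Om)$; the equation $\eqref{ASCHG}_2$ is then tested at the fixed times $t_{n_k}+s^*$. You instead \emph{average} the equation over $[t_n,t_n+1]$ and pass to the limit term by term, which requires you to establish uniform-in-$s$ convergence $\psi(t_n+s)\to\psi_\infty$ in $V$ (your compactness/uniqueness-of-$L_2$-limit argument does establish this correctly) and then to invoke dominated convergence in $s$. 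Both routes are valid: the paper's fixed-$s^*$ device is slightly more economical, avoiding the integrals in $s$ altogether, while your averaging eliminates the need to pass to a further subsequence and to worry about the a.e.\ selection. The paper's treatment of $\mu(t_{n_k}+s^*)$ as a Cauchy sequence in $L_2$ (via Poincar\'e--Wirtinger and closedness of $\nabla$) and your decomposition $\mu=(\mu-\overline\mu)+\overline\mu$ serve the same purpose and carry the same content. Your elliptic-regularity bootstrap to $H_2^2(\Om)$ is only sketched where the paper spells out the $D(A_q)$ step, but the outline is sound.

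One small stylistic point: in (i) you bound $E(\psi(t))$ from below by invoking the already-established a priori estimate $\psi\in L_\infty(\R_+;H_2^1)$; the paper instead gets a lower bound for $E$ on all of $V$ directly from \eqref{growthPhi} together with the Poincar\'e inequality (using $\eta<\la_1$), which is logically cleaner since it does not lean on a bound that is itself a consequence of the dissipation argument. Either is acceptable given what has been proved before the proposition.
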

\bpr Inequality \eqref{enineqCHG2} implies that
$E(\psi(\cdot))$ is nonincreasing with respect to $t$. Furthermore
by \eqref{growthPhi} it follows that $E(u)$ is bounded from below
for all $u\in V$. This proves (i). Assertion (ii) follows easily
from well-known facts in the theory of dynamical systems.

Let $\psi_\infty\in \om(\psi)$. Then there exists a sequence
$(t_n)\nearrow+\infty$ such that $\psi(t_n)\to\psi_\infty$ in $V$
as $n\to\infty$. Since $\pa_t\psi\in L_2(\R_+\times\Om)$ it
follows that $\psi(t_n+s)\to\psi_\infty$ in $L_2(\Om)$ for all
$s\in [0,1]$ and by relative compactness also in $V$. Integrating \eqref{enineqCHG2}
from $t_n$ to $t_n+1$ we obtain
$$E(\psi(t_n+1))-E(\psi(t_n))+\ep\int_0^1\int_\Om\left(|\nabla\mu(t_n+s,x)|^2+|\pa_t\psi(t_n+s,x)|^2\right)\
dx\ ds\le 0.$$ Letting $t_n\to+\infty$ yields
$$|\nabla\mu(t_n+\cdot,\cdot)|,\pa_t\psi(t_n+\cdot,\cdot)\to 0\quad\text{in $L_2([0,1]\times\Om)$}.$$ This in turn yields a
subsequence $(t_{n_k})$ such that
$|\nabla\mu(t_{n_k}+s)|,\pa_t\psi(t_{n_k}+s)\to 0$ in $L_2(\Om)$ for
a.e. $s\in [0,1]$. We fix such an $s$, say $s^*\in [0,1]$. The
Poincar\'{e}-Wirtinger inequality implies that
\begin{multline*}|\mu(t_{n_k}+s^*)-\mu(t_{n_l}+s^*)|_2\\\le
C_p\left(|\nabla\mu(t_{n_k}+s^*)-\nabla\mu(t_{n_l}+s^*)|_2+\int_\Om|\Phi'(\psi(t_{n_k}+s^*))-\Phi'(\psi(t_{n_l}+s^*))|\
dx\right),\end{multline*} since $\int_\Om\mu\
dx=\int_\Om\Phi'(\psi)\ dx$. Letting $k,l\to\infty$ and making use
of \eqref{growthPhi3} it follows that $\mu(t_{n_k}+s^*)$ is a
Cauchy sequence in $L_2(\Om)$, hence it admits a limit, which we
denote by $\mu_\infty$. Since the gradient is a closed operator in
$L_2(\Om;\R^n)$ it holds that $\mu_\infty\in H_2^1(\Om)$ and
$\nabla\mu_\infty=0$. Thus $\mu_\infty=const.$ and we have the
identity $\mu_\infty=\frac{1}{|\Om|}\int_\Om\Phi'(\psi_\infty)\
dx$. Finally we multiply $\eqref{ASCHG}_2$ by a function $\ph\in
V$ in $L_2(\Om)$ to the result
\begin{multline}\label{ASCHG1}(\mu(t_{n_k}+s^*),\ph)_2+(c\cdot\nabla\mu(t_{n_k}+s^*),\ph)_2\\=
\beta(\pa_t\psi(t_{n_k}+s^*),\ph)_2-(\De\psi(t_{n_k}+s^*),\ph)_2+(\Phi'(\psi(t_{n_k}+s^*)),\ph)_2.\end{multline}
Taking the limit $t_{n_k}\to\infty$ we obtain
$$a(\psi(t_{n_k}+s^*),\ph)\to
(\mu_\infty-\Phi'(\psi_\infty),\ph)_2,$$ where $a:V\times V\to\R$
is defined by $a(u,v)=(\nabla u,\nabla v)_2$ and $(\cdot,\cdot)_2$
denotes the scalar product in $L_2(\Om)$. Since
$\Phi'(\psi_\infty)\in L_{q}(\Om)$ with $q=6/(\gamma+2)$ it follows
that $\psi_\infty\in D(A_q)=\{u\in H_q^2(\Om):\pa_\nu u=0\}$,
where $A_q$ is the part of the operator $A$ in $L_q(\Om)$ which is
induced by the form $a(u,v)$. Observe that $q>6/5$ by assumption,
whence we may apply a bootstrap argument to conclude
$\psi_\infty\in H_2^2(\Om)$ and $\pa_\nu\psi_\infty=0$ on $\Ga$ (recall that $q>1$ may be arbitrarily large in case $n\in\{1,2\}$).
Going back to \eqref{ASCHG1} we obtain for
$(t_{n_k})\nearrow\infty$ the identity
$$(\nabla\psi_\infty,\nabla\ph)_2+(\Phi'(\psi_\infty),\ph)_2=(\mu_\infty,\ph)_2,$$
for all functions $\ph\in V$. This yields (iii) after integration
by parts. To prove (iv) observe that by \cite[Proposition 5.2]{PrWi11} the functional $E$ is twice continuously Fr\'{e}chet differentiable and its first derivative is given by
$$\langle E'(u),h\rangle_{V^*,V}=\int_\Om \nabla u\nabla h\
dx+\int_\Om\Phi'(u) h\ dx,\quad u,h\in V.$$
Integration by parts finally yields assertion (iv).

\epr

At this point we could simply refer to the paper of \textsc{Miranville} \& \textsc{Rougirel} \cite{MirRou} to prove the main Theorem \ref{mainthmasymp} below. However, for the sake of completeness we provide a proof of this result.

The next proposition is the key for the proof of the
convergence of the orbit $\{\psi(t)\}_{t\ge 0}$ towards a stationary state as
$t\to\infty$.
\begin{pro}[Lojasiewicz-Simon inequality]\label{LSCHG}
Let $\ph\in \om(\psi)$ and assume in addition to \eqref{growthPhi} and \eqref{growthPhi3} that $\Phi$ is real
analytic. Then there exist constants $s\in (0,\frac{1}{2}]$,
$C,\de>0$ such that
$$|E(u)-E(\ph)|^{1-s}\le C|E'(u)|_{V^*},$$
whenever $|u-\ph|_V\le\de$.
\end{pro}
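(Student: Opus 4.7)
The plan is to deduce the inequality from the abstract Lojasiewicz--Simon theorem (in the version going back to Simon and generalized by Chill) applied to the energy $E$ on the Hilbert space $V$. The three standing hypotheses of the abstract theorem are: (a) $E$ is real analytic in a neighborhood of $\varphi$ with values in $\mathbb{R}$; (b) the first derivative $E'\colon V\to V^*$ is real analytic; (c) the second derivative $E''(\varphi)\in\mathcal{B}(V,V^*)$ is a Fredholm operator of index zero. Once these are checked, the abstract result delivers constants $s\in(0,\tfrac12]$ and $C,\delta>0$ such that $|E(u)-E(\varphi)|^{1-s}\le C|E'(u)|_{V^*}$ whenever $|u-\varphi|_V\le \delta$.

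First I would record the derivatives. From $\langle E'(u),h\rangle = \int_\Omega (\nabla u\cdot\nabla h+\Phi'(u)h)\,dx$ for $u,h\in V$, one obtains
\[
\langle E''(\varphi)h,k\rangle = \int_\Omega \nabla h\cdot\nabla k\,dx + \int_\Omega \Phi''(\varphi)\,hk\,dx,\qquad h,k\in V.
\]
The analyticity of $E$ and $E'$ on a neighborhood of $\varphi$ inside $V$ is a standard consequence of the real analyticity of $\Phi$ together with the growth bound \eqref{growthPhi3}: since $\varphi\in H_2^2(\Omega)\hookrightarrow C(\overline\Omega)$ when $n\le 3$ (by the bootstrap in Proposition \ref{omlimset}(iii), or else by using the embedding $V\hookrightarrow L_6(\Omega)$ and the polynomial growth of $\Phi,\Phi',\Phi''$), the Nemytskii operators $u\mapsto \Phi(u)$ and $u\mapsto \Phi'(u)$ are analytic between the relevant Sobolev spaces in a ball around $\varphi$. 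This is the routine part.

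The main point is (c), the Fredholm property. The operator $E''(\varphi)$ can be identified with $L_\varphi h := -\Delta h + \Phi''(\varphi)h - \frac{1}{|\Omega|}\int_\Omega\Phi''(\varphi)h\,dx$, viewed as a bounded operator from $V\cap H_2^2(\Omega)_N$ (with Neumann BC) into $V^*$, or equivalently as a selfadjoint bounded form operator on $V$. Write $L_\varphi = A + K$ where $A$ corresponds to $-\Delta$ on $V$ (an isomorphism $V\to V^*$ by Poincaré--Wirtinger on the mean-zero subspace) and $K$ corresponds to the multiplication by $\Phi''(\varphi)$ followed by projection to mean zero. Since $\varphi\in H_2^2(\Omega)\hookrightarrow L_\infty(\Omega)$ (for $n\le 3$) and $\Phi''\in C(\mathbb{R})$, the multiplier $\Phi''(\varphi)$ is bounded, and the composition $V \hookrightarrow L_2(\Omega) \xrightarrow{\Phi''(\varphi)\cdot} L_2(\Omega) \hookrightarrow V^*$ is compact by the compact embedding $V\hookrightarrow\!\hookrightarrow L_2(\Omega)$. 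Hence $L_\varphi = A+K$ is a compact perturbation of an isomorphism, so Fredholm of index zero. This step, though short, is the crucial one: everything else reduces to applying the abstract theorem.

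With (a)--(c) in hand, I would invoke the abstract Lojasiewicz--Simon theorem (e.g.\ Chill, \emph{J.\ Funct.\ Anal.} 201 (2003), Thm.~3.10, or the version in Haraux--Jendoubi). The proof via Lyapunov--Schmidt reduction splits $V=\ker L_\varphi\oplus (\ker L_\varphi)^\perp$; on the range complement $L_\varphi$ is an isomorphism, so the implicit function theorem (in the analytic category) produces an analytic finite-dimensional reduced functional $\tilde E$ on the finite-dimensional kernel, to which the classical Lojasiewicz inequality for analytic functions on $\mathbb{R}^N$ applies, yielding the exponent $s\in(0,1/2]$. Lifting back gives the claimed estimate on a $V$-neighborhood of $\varphi$. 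The anticipated obstacle, verifying Fredholmness of $E''(\varphi)$, is handled as above by the compact Sobolev embedding for $n\le 3$; all other ingredients are formal consequences of analyticity of $\Phi$ and boundedness of the orbit.
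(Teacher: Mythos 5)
The paper's proof of this proposition is a single line: it cites Proposition 6.6 of \cite{CFP}. You instead reconstruct the underlying argument from scratch: verify that $E$ and $E'$ are real analytic near $\ph$, that $E''(\ph)$ is Fredholm of index zero, and then invoke the abstract Lojasiewicz--Simon theorem (Chill/Haraux--Jendoubi) via Lyapunov--Schmidt reduction. This is precisely the standard route and is, in essence, what the cited reference carries out, so your approach is correct and is genuinely more informative than the paper's citation. Your Fredholm argument is sound: $\ph\in H_2^2(\Om)\hookrightarrow L_\infty(\Om)$ for $n\le 3$, so multiplication by $\Phi''(\ph)$ is bounded on $L_2(\Om)$, and the compact embedding $V\hookrightarrow\hookrightarrow L_2(\Om)$ makes $E''(\ph)$ a compact perturbation of the isomorphism $-\De:V\to V^*$.

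One step you dismiss as ``routine'' is in fact the delicate part and is exactly what the cited reference is there to settle: analyticity of the Nemytskii map $u\mapsto\Phi'(u)$ as a map $V\to V^*$ on a $V$-neighborhood of $\ph$. For $n\ge 2$, $V=H_2^1(\Om)$ does not embed into $L_\infty(\Om)$, and it is classical that superposition operators between $L_p$-spaces are real analytic essentially only when the nonlinearity is a polynomial. The growth bound \eqref{growthPhi3} yields boundedness and $C^1$-regularity, but genuine analyticity of the power series of $\Phi'$ in the $\calB(V,V^*)$ norm is not automatic for a general real-analytic $\Phi$. One either restricts to polynomial $\Phi$, or uses a refined two-space version of the abstract theorem that exploits the uniform $L_\infty$-bound of $\ph$ (and regularity of nearby equilibria) to localize the analyticity argument. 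Since the paper delegates exactly this point to \cite[Prop.~6.6]{CFP}, your sketch should flag it as the substantive verification rather than a formality; otherwise the outline is accurate.
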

\bpr
This is Proposition 6.6 in \cite{CFP}.
\epr

\noindent Now we are in a position to state the main result of
this section.
\begin{thm}\label{mainthmasymp}
Let $\Phi$ satisfy the conditions \eqref{growthPhi} and
\eqref{growthPhi3}. Assume in addition that $\Phi$ is real
analytic. Then the limit
$$\lim_{t\to\infty}\psi(t)=:\psi_\infty$$
exists in $V$ and $\psi_\infty$ is a strong solution of the
stationary problem \eqref{statsysCHG}.
\end{thm}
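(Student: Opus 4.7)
The plan is to use the Lojasiewicz--Simon inequality of Proposition \ref{LSCHG} to prove that $|\pa_t\psi|_2 \in L_1(\R_+)$, which combined with the relative compactness from Proposition \ref{relcompCHG} forces $\psi(t)$ to converge in $V$ to a single limit $\psi_\infty$ as $t\to\infty$. Identification of $\psi_\infty$ as a strong solution of the stationary problem then follows directly from Proposition \ref{omlimset}~(iii). Throughout, I work in the shifted setting so that $\int_\Om\psi(t)\,dx=0$, i.e.\ $\psi(t)\in V$.

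The first step is to bound the Fr\'echet derivative $E'(\psi(t))$ in $V^*$ by quantities controlled by the energy dissipation \eqref{enineqCHG2}. Using $\eqref{ASCHG}_2$ we have $-\De\psi+\Phi'(\psi)=\mu-c\cdot\nabla\mu-\beta\pa_t\psi$, and for $h\in V$, integration by parts together with $\diver c=0$ in $\Om$ and $(c|\nu)=0$ on $\Ga$ yields
$$\langle E'(\psi),h\rangle_{V^*,V}=\int_\Om(\mu-\bar\mu)\,h\,dx+\int_\Om(\mu-\bar\mu)\,(c\cdot\nabla h)\,dx-\beta\int_\Om\pa_t\psi\,h\,dx,$$
where $\bar\mu$ denotes the spatial mean of $\mu$, inserted freely because $h$ has mean zero. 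The Poincar\'e--Wirtinger inequality then gives the key estimate
$$|E'(\psi(t))|_{V^*}\le C\bigl(|\nabla\mu(t)|_2+|\pa_t\psi(t)|_2\bigr).$$

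Next, since $\om(\psi)$ is compact in $V$ and $E\equiv E_\infty$ on $\om(\psi)$, I cover $\om(\psi)$ by finitely many balls provided by Proposition \ref{LSCHG} and, using that eventually $|E(\psi(t))-E_\infty|\le 1$, extract uniform constants $s\in(0,\tfrac12]$, $C>0$ and a neighborhood $U$ of $\om(\psi)$ such that $|E(u)-E_\infty|^{1-s}\le C\,|E'(u)|_{V^*}$ for all $u\in U$. Since $\mathrm{dist}_V(\psi(t),\om(\psi))\to 0$, there exists $T_0$ with $\psi(t)\in U$ for $t\ge T_0$. If $E(\psi(t_0))=E_\infty$ for some $t_0\ge T_0$, monotonicity together with the integrated form of \eqref{enineqCHG2} forces $\pa_t\psi\equiv 0$ on $[t_0,\infty)$ and we are done. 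Otherwise, define $H(t):=(E(\psi(t))-E_\infty)^s$; combining \eqref{enineqCHG2}, the $V^*$-bound above, the Lojasiewicz--Simon inequality and the elementary estimate $a^2+b^2\ge\tfrac12(a+b)^2$, I obtain
$$-\frac{d}{dt}H(t)\ge\tilde c\,|\pa_t\psi(t)|_2,\quad t\ge T_0,$$
for some $\tilde c>0$.

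Integrating on $[T_0,\infty)$ and using $H(t)\to 0$ yields $\int_{T_0}^\infty|\pa_t\psi(t)|_2\,dt<\infty$. Hence $\psi(t)$ is Cauchy in $L_2(\Om)$ and converges to some $\psi_\infty\in L_2(\Om)$; Proposition \ref{relcompCHG} upgrades this to convergence in $V$. Consequently $\om(\psi)=\{\psi_\infty\}$, and Proposition \ref{omlimset}~(iii) identifies $\psi_\infty$ as a strong solution of \eqref{statsysCHG}. The main technical obstacle is the $V^*$-estimate on $E'(\psi(t))$: the non-self-adjoint term $c\cdot\nabla\mu$ introduced by Gurtin's modification must be handled via integration by parts exploiting precisely the structural assumptions $\diver c=0$ and $(c|\nu)=0$, which is what makes Hypothesis (H) and the boundary behavior of $c$ essential at this stage.
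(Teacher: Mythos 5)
Your proposal is correct and follows essentially the same strategy as the paper: bound $|E'(\psi(t))|_{V^*}$ by $|\nabla\mu(t)|_2+|\pa_t\psi(t)|_2$, extract uniform Lojasiewicz--Simon constants from the compactness of $\om(\psi)$, differentiate $H(t)=(E(\psi(t))-E_\infty)^s$ and combine with the energy dissipation inequality to conclude $|\pa_t\psi|_2\in L_1(\R_+)$, then upgrade $L_2$-convergence to $V$-convergence via relative compactness and invoke Proposition \ref{omlimset}~(iii). The only differences are cosmetic: the paper estimates the term $-\int_\Om c\cdot\nabla\mu\,h\,dx$ directly by Cauchy--Schwarz and Poincar\'e--Wirtinger without transferring the gradient onto $h$, whereas you integrate by parts (both give the same $V^*$-bound); and you explicitly dispose of the degenerate case $E(\psi(t_0))=E_\infty$, which the paper leaves implicit.
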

\bpr Since each element $\ph\in\om(\psi)$ is a critical point of
$E$, Proposition \ref{LSCHG} implies that the Lojasiewicz-Simon
inequality is valid in some neighborhood of $\ph\in\om(\psi)$. By
Proposition \ref{omlimset} (ii) the $\om$-limit set is compact,
hence there exists $N\in\N$ such that
$$\bigcup_{j=1}^N B_{\de_j}(\ph_j)\supset\om(\psi),$$ where
$B_{\de_j}(\ph_j)\subset V$ are open balls with center
$\ph_i\in\om(\psi)$ and radius $\de_i$. Additionally in each ball
the Lojasiewicz-Simon inequality is valid. It follows from
Proposition \ref{omlimset} (i) and (ii) that the energy functional
$E$ is constant on $\om(\psi)$, i.e. $E(\ph)=E_\infty$, for all
$\ph\in\om(\psi)$. Thus there exists an open set
$U\supset\om(\psi)$ and \emph{uniform} constants $s\in
(0,\frac{1}{2}]$ $C,\de>0$ with
$$|E(u)-E_\infty|^{1-s}\le C|E'(u)|_{V^*},$$
for all $u\in U$. A well-known result in the theory of dynamical
systems sates that the $\om$-limit set is an attractor for the
orbit $\{\psi(t)\}_{t\in\R_+}$. To be precise this means
$$\lim_{t\to\infty}\dist(\psi(t),\om(\psi))=0\quad\text{in}\ V.$$
This implies that there exists some time $t^*\ge 0$ such that
$\psi(t)\in U$ for all $t\ge t^*$ and thus the Lojasiewicz-Simon
inequality holds for the solution $\psi(t)$, i.e.
\beq\label{LSCHG2} |E(\psi(t))-E_\infty|^{1-s}\le
C|E'(\psi(t))|_{V^*},\quad t\ge t^*.\eeq Define a function
$H:\R_+\to\R_+$ by $H(t)=(E(\psi(t)-E_\infty)^s$. Then with
\eqref{enineqCHG2} and \eqref{LSCHG2} it holds that
\begin{equation}\label{HCHG}\begin{split}
-\difft H(t)&=(E(\psi(t))-E_\infty)^{s-1}\left(-\difft
E(\psi(t))\right)\\
&\ge
\ep\frac{|\pa_t\psi(t)|_2^2+|\nabla\mu(t)|_2^2}{(E(\psi(t))-E_\infty)^{1-s}}\\
&\ge
C_\ep\frac{|\pa_t\psi(t)|_2^2+|\nabla\mu(t)|_2^2}{|E'(\psi(t))|_{V^*}}
\end{split}\end{equation}
The first Fr\'{e}chet
derivative of $E$ in $V$ reads
$$\langle E'(u),h\rangle_{V^*,V}=\int_\Om\nabla u\nabla h\
dx+\int_\Om\Phi'(u)h\ dx,$$ for all $(u,h)\in V\times V$. Setting
$u=\psi(t)$ and making use of $\eqref{ASCHG}_2$ we obtain with the
help of H\"{o}lder's inequality, Poincar\'{e}'s inequality and
integration by parts
\begin{equation}\label{HCHG2}\begin{split}\langle
E'(\psi(t)),h\rangle_{V^*,V}&=\int_\Om(\mu(t)-\bar{\mu}(t))h\
dx-\int_\Om
c\cdot\nabla\mu(t) h\ dx-\beta\int_\Om\pa_t\psi(t) h\ dx\\
&\le C(|\nabla\mu(t)|_2+|\pa_t\psi(t)|_2)|h|_2,
\end{split}\end{equation}
since $\diver c(x)=0,\ x\in\Om$ and $(c(x)|\nu(x))=0,\
x\in\pa\Om$. Taking the supremum in \eqref{HCHG2} over all
functions $h\in V$ with norm less than 1 it follows that
$$|E'(\psi(t))|_{V^*}\le C(|\nabla\mu(t)|_2+|\pa_t\psi(t)|_2).$$
We insert this estimate into \eqref{HCHG} to obtain
$$-\difft H(t)\ge C_\ep(|\nabla\mu(t)|_2+|\pa_t\psi(t)|_2).$$
Integrating this inequality from $t^*$ to $\infty$ it follows that
$|\pa_t\psi(\cdot)|_2,|\nabla\mu(\cdot)|_2\in L_1(\R_+)$, since
$H(t)>0$. This implies that the limit
$\lim_{t\to\infty}\psi(t)=:\psi_\infty$ exists firstly in
$L_2(\Om)$ but by relative compactness also in $V$. Finally, by
Proposition \ref{omlimset} (iii) the limit $\psi_\infty$ is a
solution of the stationary problem \eqref{statsysCHG}. The proof
is complete.

\epr

\section{Appendix}

\emph{Proof of Proposition \ref{proHS2}.}

We substitute $\eqref{HSpert}_2$ into $\eqref{HSpert}_1$ to obtain
the elliptic problem \beq\label{EllPrb1}
\mu+\calA(x,\partial)\mu=\diver\left(a\diver(D\nabla
u)\right)-\diver(D\nabla u)+\tilde{f},\quad x\in\R_+^n;\quad
\calB(x,\partial)\mu=h_1,\quad x\in\pa\R_+^n,\eeq with
$$\tilde{f}=\beta f+a\cdot\nabla g-g\in L_p(J_\de\times\R_+^n).$$
Here the differential operators $\calA(x,\partial)$ and $\calB(x,\partial)$ are
defined by
$$\calA(x,\partial)\mu:=-(a+c)\cdot\nabla \mu+\diver(a(c\cdot \nabla
\mu))-\diver(\beta B\nabla \mu),\quad x\in\R_+^n,$$ and
$$\calB(x,\partial)\mu:=B\nabla\mu\cdot\nu,\quad x\in\pa\R_+^n,$$
valid for all $\mu\in H_p^2(\R_+^n)$. It will be convenient to
rewrite the operator $\calA(x,\partial)$ as follows.
$\calA(x,\partial)=\calA_0(x,\partial)+\calA_1(x,\partial)$, where
$$\calA_0(x,\partial)\mu:=-\diver\left(\beta B\nabla\mu-\frac{1}{2}(a\otimes c+c\otimes
a)\nabla\mu\right),\ x\in\R_+^n,$$ and
$$\calA_1(x,\partial)\mu:=a\cdot(\nabla c\nabla\mu)-\frac{1}{2}\Mdiver(a\otimes c+c\otimes
a)\cdot\nabla\mu-(a+c)\cdot\nabla\mu,\ x\in\R_+^n.$$ Actually this
splitting shows that problem \eqref{EllPrb1} is indeed elliptic by
Assumption (H) and Proposition \ref{HA}, provided $\om>0$ is
sufficiently small. Will will now proceed in several
steps.\vspace{0.25cm}

\emph{Step 1}. In this first step we want to reduce \eqref{EllPrb1}
to the case of homogeneous boundary conditions $\calB(x,D)\mu=0$.
Consider the elliptic problem with constant coefficients
\beq\label{EllPrb1a}\la\mu-\diver(\tilde{B}_0\nabla\mu)=f,\quad
x\in\R_+^n,\quad (\tilde{B}_0\nabla\mu|\nu)=g,\quad
x\in\partial\R_+^n, \eeq where $\tilde{B}_0:=\beta
B_0-\frac{1}{2}(a_0\otimes c_0+c_0\otimes a_0)$ and $\la\in\R$ is a
parameter. Note that \eqref{EllPrb1a} is an elliptic problem with a
conormal boundary condition and constant coefficients.

Thanks to Proposition \ref{HA} the matrix $\tilde{B_0}$ is positive
definite. By well known results it follows that for each $f\in
L_p(\R_+^n)$ and $g\in W_p^{1-1/p}(\partial\R_+^n)$ problem
\eqref{EllPrb1a} has a unique solution $\mu\in H_p^2(\R_+^n)$,
provided $\la>0$. We remind that the variable coefficients
$$a(x)=a_0+a_1(x),\quad c(x)=c_0+c_1(x),\quad B(x)=B_0+B_1(x),$$
have a small deviation from the constant ones $a_0,c_0,B_0$, i.e.
$$|a_1|_\infty+|c_1|_\infty+|B_1|_{\infty}\le\om,$$
with $\om>0$ being sufficiently small. Furthermore we have $a,c\in
W_\infty^1(\R_+^n;\R^n)$, $B\in W_\infty^1(\R_+^n;\R^{n\times n})$.
Therefore we may apply perturbation theory to conclude that there
exists $\la_0>0$ such that for each $f\in L_p(\R_+^n)$ and $g\in
W_p^{1-1/p}(\partial\R_+^n)$ the elliptic problem
\beq\label{EllPrb1b}\la\mu-\diver(\tilde{B}\nabla\mu)=f,\quad
x\in\R_+^n,\quad \tilde{B}\nabla\mu\cdot\nu=g,\quad
x\in\partial\R_+^n,\eeq has a unique solution $\mu\in
H_p^2(\R_+^n)$, provided $\la\ge\la_0$ and $\om>0$ is sufficiently
small. Here $\tilde{B}:=\beta B-\frac{1}{2}(a\otimes c+c\otimes a)$.
Note that $\calA_0(x,\partial)\mu=-\diver(\tilde{B}\nabla\mu)$ and
$$(\tilde{B}\nabla\mu|\nu)=(B\nabla\mu|\nu)=\calB(x,\partial)\mu,$$
since $(a|\nu)=(c|\nu)=0$ by assumption. Moreover, the operator
$\calA_1(x,\partial)$ defined above contains only terms of lower order with
$L_\infty$-coefficients. Thus, applying perturbation theory one more
time, there exists $\la_1>0$ such that for each $f\in L_p(\R_+^n)$
and $g\in W_p^{1-1/p}(\partial\R_+^n)$ the problem
\beq\label{EllPrb1c}\la\mu+\calA(x,\partial)\mu=f,\quad x\in\R_+^n;\quad
\calB(x,\partial)\mu=g,\quad x\in\pa\R_+^n,\eeq has a unique solution
$\mu\in H_p^2(\R_+^n)$, provided $\la\ge\la_1$ and $\om>0$ is
sufficiently small.\vspace{0.25cm}

\emph{Step 2.} The results of this first step enable us to reduce
\eqref{EllPrb1} to the case of homogeneous boundary conditions. In
this step we show that the $L_p$-realization $A$ of the boundary
value problem $(\calA,\calB)$ with domain
$$D(A)=\{u\in H_p^2(\R_+^n):\calB(x,\partial)u=0\},$$
is dissipative. First, let $p\ge 2$. Integration by parts yields
\begin{align*}
\Real&\int_{\R^n_+}Aw\ \bar{w}|w|^{p-2}\ dx\\
&=-\int_{\R^n_+}|w|^{p-4}\Real\left(\frac{p}{2}(\tilde{B}\nabla
w\cdot\nabla\bar{w})|w|^2+\left(\frac{p}{2}-1\right)(\tilde{B}\nabla
w\cdot\nabla w)\bar{w}^2\right)\ dx
\end{align*}
for all $w\in D(A)$, since $\diver a(x)=\diver c(x)=0$ in $\Omega$ and $(a(x)|\nu(x))=(c(x)|\nu(x))=0$ on $\partial\Omega$. Setting $\nabla w=u+iv$ and $w=b_1+ib_2$ with $u,v\in\mathbb{R}^n$, $b_j\in\mathbb{R}$, we obtain the estimate
\begin{align*}
\Real\Big(\frac{p}{2}(\tilde{B}\nabla
w\cdot\nabla\bar{w})|w|^2+\Big(\frac{p}{2}-1\Big)(&\tilde{B}\nabla
w\cdot\nabla w)\bar{w}^2\Big)\\ &\ge
\Big[((\tilde{B}u|u)+(\tilde{B}v|v))(b_1^2+b_2^2)\Big]\\
&\ge
\ep\beta(|u|^2+|v|^2)(b_1^2+b_2^2)=\ep\beta|\nabla
w|^2|w|^2.
\end{align*}
Here we made use of Proposition \ref{HA}. This shows that $A$ is dissipative in $L_p(\Omega)$ for $p\ge 2$. If $p\in (1,2)$ we replace $|w|$ by $w_\ep:=\sqrt{|w|^2+\ep}$ for $\ep>0$ in the calculations involving $A$ and then pass to the limit as $\ep\searrow0$.

The dissipativity of $A$ allows us to set $\la=1$ in
\eqref{EllPrb1c}. By the same arguments one can show that the
$L_p$-realization $A_0$ of the elliptic boundary value problem
$(\calA_0,\calB)$ with domain $D(A_0)=D(A)$ is dissipative, too. Indeed, $A_0\mu=\diver (\tilde{B}\nabla\mu)$ and $\tilde{B}$ defined above is a positive definite and symmetric matrix by Proposition \ref{HA}.
Therefore we may also set $\la=1$ in
\eqref{EllPrb1b}.\vspace{0.25cm}

\emph{Step 3.} By the results of Steps 1 \& 2 we may decompose the
unique solution $\mu\in H_p^2(\R_+^n)$ of \eqref{EllPrb1} into
$\mu=\mu_1+\mu_2$, where $\mu_1,\mu_2\in H_p^2(\R_+^n)$ are the
unique solutions of the elliptic problems
\begin{equation}\label{EllPrb1d}
\mu_1+\calA_0(x,\partial)\mu_1=\diver\left((a\diver(D\nabla
u)\right)),\quad x\in\R_+^n;\quad \calB(x,\partial)\mu_1=0,\quad
x\in\pa\R_+^n,
\end{equation}
and
\begin{equation}\label{EllPrb1e} \mu_2+\calA(x,\partial)\mu_2=f-\diver(D\nabla u)-\calA_1(x,D)\mu_1,\quad x\in\R_+^n;\quad \calB(x,\partial)\mu_2=g,\quad
x\in\pa\R_+^n.
\end{equation}
For $\mu_2$ we have the estimate
$$|\mu_2|_{H_p^2(\R_+^n)}\le
C\left(|u|_{H_p^2(\R_+^n)}+|f|_{L_p(\R_+^n)}+|g|_{W_p^{1-1/p}(\partial\R_+^n)}+|\mu_1|_{H_p^1(\R_+^n)}\right),$$
with some constant $C>0$, since $\calA_1(x,\partial)$ consists solely of
lower order terms with $L_\infty$-coefficients. To obtain the
desired estimate for $\mu$, we therefore have to prove the estimate
$$|\mu_1|_{H_p^1(\R_+^n)}\le C|u|_{H_p^2(\R_+^n)},$$
for the solution $\mu_1$ of \eqref{EllPrb1d}, where $C>0$. For this
purpose note that the $L_p$-realization $A_0$ of $(\calA_0,\calB)$
generates a $C_0$-semigroup in $E_0:=L_p(\R_+^n)$ and $\la+A_0$ is a
linear isomorphism from $E_1:=D(A_0)$ to $E_0$ for each
$\la\in\rho(-A_0)$, the resolvent set of $A_0$. Let
$E_{1/2}:=[E_0,E_1]_{1/2}$, $E_{-1/2}:=(E_{1/2}^\sharp)'$ where
$E^\sharp=E'$ and denote by $A_{-1/2}$ the $E_{-1/2}$-realization of
$A_0$. Here the symbol $[\cdot,\cdot]_{1/2}$ denotes the complex
interpolation functor of exponent $1/2$. It follows from
\cite[Theorem V.2.1.3 \& Corollary V.2.1.4]{Ama} that the operator $A_{-1/2}$ is the
generator of a $C_0$-semigroup with $\rho(A_{-1/2})=\rho(A_0)$ and
$\la+A_{-1/2}:E_{1/2}\to E_{-1/2}$ is a linear isomorphism for each
$\la\in\rho(-A_0)$. It remains to determine the spaces $E_{1/2}$ and
$E_{-1/2}$. To compute $E_{1/2}$, we have to interpolate Sobolev
spaces involving boundary conditions. This has been done e.g. in
\cite{Seeley} and \cite{AmaNonhom}. Following these results it holds
that
$$E_{1/2}=[E_0,E_1]_{1/2}=H_p^1(\R_+^n).$$
Actually in \cite{AmaNonhom} this result was proven for
$C^1$-coefficients but the result remains true for
$W_\infty^1$-coefficients. From this characterization we
obtain
$$E_{-1/2}=\left(H_{p'}^1(\R_+^n)\right)',\quad \frac{1}{p}+\frac{1}{p'}=1,\ 1<p<\infty.$$
Set $F=a\diver(D\nabla u)\in H_p^1(\R_+^n)$ and $f=\diver F\in
L_p(\R_+^n)$. Then $\mu_1\in H_p^2(\R_+^n)$ is a solution of the
abstract equation $\mu_1+A_0\mu_1=f$. We claim that this $f$ can be identified with a linear functional in $E_{-1/2}$ (for which we will write $f$ again).
Indeed the mapping
$$\ph\mapsto\int_{\R_+^n}f\ph\
dx=:\langle f,\ph\rangle_{E_{-1/2},H_{p'}^1},$$ defines a linear
functional on $H_{p'}^1(\R_+^n)$, since by H\"older's inequality
it holds that
\begin{equation*}
|f|_{E_{-1/2}}=\sup_{|\ph|_{H_{p'}^1(\R_+^n)}\le
1}\left|\langle
f,\ph\rangle_{E_{-1/2},H_{p'}^1}\right|=\sup_{|\ph|_{H_{p'}^1(\R_+^n)}\le
1}\left|\int_{\R_+^n} f\ph\ dx\right|\le|f|_{L_p(\R_+^n)}.
\end{equation*}
Integrating by parts we obtain furthermore
\begin{align*}|f|_{E_{-1/2}}&=\sup_{|\ph|_{H_{p'}^1(\R_+^n)}\le
1}\left|\langle
f,\ph\rangle_{E_{-1/2},H_{p'}^1}\right|=\sup_{|\ph|_{H_{p'}^1(\R_+^n)}\le
1}\left|\int_{\R_+^n} f\ph\
dx\right|\\
&=\sup_{|\ph|_{H_{p'}^1(\R_+^n)}\le 1}\left|\int_{\R_+^n} \diver
F\ph\ dx\right|=\sup_{|\ph|_{H_{p'}^1(\R_+^n)}\le
1}\left|\int_{\R_+^n} (F|\nabla\ph)\
dx\right|\\
&\le\sup_{|\ph|_{H_{p'}^1(\R_+^n)}\le
1}|F|_{L_p(\R_+^n;\R^n)}|\ph|_{H_{p'}^1(\R_+^n)}=|F|_{L_p(\R_+^n;\R^n)},
\end{align*}
where we also made use of $(F|\nu)=(a|\nu)\diver(D\nabla u)=0$.
Since $A_{-1/2}$ is the $E_{-1/2}$-realization of $A_0$ (hence an extension of $A_0$) with
$1\in\rho(-A_{-1/2})=\rho(-A_0)$ and since $f=\diver(a\De u)\in
E_{-1/2}$, we obtain a constant $C>0$ such that the estimate
$$|\mu_1|_{H_p^1(\R_+^n)}=|\mu_1|_{E_{1/2}}\le C|f|_{E_{-1/2}}\le C|F|_{L_p(\R_+^n;\R^n)}\le
\tilde{C}|u|_{H_p^2(\R_+^n)},$$ for the solution $\mu_1\in
H_p^2(\R_+^n)$ of \eqref{EllPrb1d} is valid. From the estimates for
$\mu_1$ and $\mu_2$ and the embedding $H_p^2(\R_+^n)\hookrightarrow
H_p^1(\R_+^n)$, we obtain a constant $C>0$ such that
$$|\mu|_{H_p^1(\R_+^n)}\le
C\left(|f|_{L_p(\R_+^n)}+|g|_{H_p^1(\R_+^n)}+|h_1|_{W_p^{1-1/p}(\R^{n-1})}+|u|_{H_p^2(\R_+^n)}\right).$$
In the case that the functions depend on the parameter $t$ it
follows that the estimate
\begin{equation}\label{estmu}|\mu(t)|_{H_p^1(\R_+^n)}\le
C\left(|f(t)|_{L_p(\R_+^n)}+|g(t)|_{H_p^1(\R_+^n)}+|h_1(t)|_{W_p^{1-1/p}(\R^{n-1})}+|u(t)|_{H_p^2(\R_+^n)}\right),\end{equation}
holds for a.e. $t\in J=[0,T]$ where the constant $C>0$ is uniform in
$t$, since the coefficients of the differential operators considered
above are independent of $t$ as well. Taking the $p$-th power and
integrating \eqref{estmu} with respect to $t$, we obtain
\begin{equation}\label{estmu1}|\mu|_{L_p(J;H_p^1(\R_+^n))}\le
C\left(|f|_{L_p(J;(\R_+^n))}+|g|_{L_p(J;H_p^1(\R_+^n))}+|h_1|_{L_p(J;W_p^{1-1/p}(\R^{n-1}))}+|u|_{L_p(J;H_p^2(\R_+^n))}\right).\end{equation}
Finally the estimate for $\pa_t u$ in $L_p(J;L_p(\R_+^n))$ follows
from $\eqref{HS}_2$ and \eqref{estmu1}. The proof is complete.

\bibliographystyle{amsplain}
\bibliography{WilkeBib2}

\end{document}